\theoremstyle{plain}
\newtheorem{theorem}{Theorem}[section]
\newtheorem{proposition}[theorem]{Proposition}
\newtheorem{lemma}[theorem]{Lemma}
\theoremstyle{definition}
\newtheorem{remark}[theorem]{Remark}
\newtheorem{definition}[theorem]{Definition}
\numberwithin{equation}{section}
\def\cD{\mathcal{D}}
\def\cF{\mathcal{F}}
\def\cH{\mathcal{H}}
\def\cL{\mathcal{L}}
\def\cN{\mathcal{N}}
\def\e{\varepsilon}
\def\E{\mathbb E}
\def\P{\mathbb P}
\def\t{\boldsymbol t}
\def\h{\boldsymbol h}
\newcommand{\R}{\mathbb{R}}
\newcommand{\N}{\mathbb{N}}
\newcommand{\W}{\dot{W}}
\newcommand{\ud}{\ensuremath{ \mathrm{d}} }
\newcommand{\Var}{\mathrm{Var}}
\newcommand{\FoxH}[5]{H_{#2}^{#1}\left(#3\:\middle\vert\: \begin{subarray}{l}#4\\[0.4em] #5\end{subarray}\right)}
\newcommand*{\one}{{{\rm 1\mkern-1.5mu}\!{\rm I}}}
\begin{document}

\title[Path properties and small ball probabilities for  Stochastic FDEs]{Sample path properties and small ball probabilities for  stochastic fractional diffusion equations}

\author[Y. Guo]{Yuhui Guo}
\address{Y. Guo: Faculty of Science and Technology, BNU-HKBU United International College, Zhuhai, Guangdong, 519087, China.}
 \email{\url{yuhuiguo@uic.edu.cn}}

\author[J. Song]{Jian Song}
\address{J. Song: Research Center for Mathematics and Interdisciplinary Sciences, Shandong University, Qingdao, Shandong, 266237, China.}
 \email{\url{txjsong@sdu.edu.cn}}

\author[R. Wang]{Ran Wang}
\address{R. Wang: School of Mathematics and Statistics, Wuhan University, Wuhan, 430072, China.}
 \email{\url{rwang@whu.edu.cn}}

\author[Y. Xiao]{Yimin Xiao}
\address{Y. Xiao: Department of Statistics and Probability, Michigan State University, East Lansing, MI 48824, USA.}
 \email{\url{xiaoy@msu.edu}}

\subjclass[2020]{ Primary 60G17, 60H15, Secondary 60G15, 60G22, 33E12}

\keywords{Stochastic partial differential equation,
Sample path regularity,
fractional Brownian motion,
Mittag-Leffler function,
Exact modulus of continuity}

\date{}

\begin{abstract}
  We consider the following stochastic space-time fractional diffusion equation with vanishing initial condition:
  \begin{equation*}
    \partial^{\beta} u(t, x)=- \left(-\Delta\right)^{\alpha / 2} u(t, x)+   I_{0+}^{\gamma}\left[\dot{W}(t, x)\right],\quad t\in[0,T],\: x \in \mathbb{R}^d,
  \end{equation*}
  where $\alpha>0$, $\beta\in(0,2)$, $\gamma\in[0,1)$,  $\left(-\Delta\right)^{\alpha / 2}$ is the fractional/power of Laplacian and $\W$
  is a fractional space-time Gaussian noise. We prove the existence and uniqueness of the solution and then focus on various
  sample path regularity properties of the solution. More specifically, we establish the exact uniform and local moduli of continuity and
  Chung’s laws of the iterated logarithm. The small ball probability is also studied.
\end{abstract}

\maketitle

\tableofcontents

\section{Introduction}

In this paper, we consider the following stochastic   fractional diffusion equation (FDE, for short):
\begin{equation}\label{e:fde}
	\begin{cases}
		\partial^{\beta} u(t, x)=-\left(-\Delta\right)^{\alpha / 2} u(t, x)+  I_{0+}^{\gamma}\left[\dot{W}(t, x)\right] , & t\in[0,T], x \in \mathbb{R}^d,   \\
		u(0, \cdot)=0,                                                                                       & \text { if } \beta \in(0,1], \\
		u(0, \cdot)=0, \quad \dfrac{\partial}{\partial t} u(0, \cdot)=0,                                 & \text { if } \beta \in(1,2),
	\end{cases}
\end{equation}
with $\alpha>0$, $\beta\in(0,2)$, and $\gamma\in[0,1)$.

In the above equation, $\left(-\Delta\right)^{\alpha / 2}$ is \textit{fractional} ($0<\alpha\le 2$) or \textit{power} of  ($\alpha>2$) \textit{Laplacian} in space. $\partial^\beta$   denotes the {\em Caputo fractional differential} operator in time:
\begin{equation}\label{e:CFDO}
  \partial^{\beta} f(t):=
\begin{cases}
  \displaystyle \frac{1}{\Gamma(n-\beta)}\int_{0}^{t}\frac{f^{(n)}(\tau)}{(t-\tau)^{\beta+1-n}}\ud\tau, & \mbox{if } \beta \neq n, \vspace{0.2cm} \\
  \dfrac{\ud^n}{\ud t^n}f(t),                                                                                    & \mbox{if }  \beta = n,
\end{cases}
\end{equation}
where $n=\lceil\beta\rceil$ is the smallest integer not smaller than $\beta$ and
 $\Gamma(x) = \int_{0}^{\infty}e^{-t}t^{x-1}\ud t$ is the Gamma function, and
$I_{0_+}^\gamma$  is the \emph{Riemann-Liouville integral} in time:
\begin{equation*}
  (I_{0+}^{\gamma}f)(t):=\frac{1}{\Gamma(\gamma)} \int_{0}^{t}f(r)(t-r)^{\gamma-1}\ud r.
\end{equation*}

In \eqref{e:fde}, the Gaussian noise $\dot W$ is  fractional with Hurst indices $H_0\in[1/2,1)$ in time and
$H_1,\dots,H_d\in(0,1)$ in space.  Namely, the covariance function of the Gaussian noise $\W$ is
\begin{equation*}
  \E[\W(t,x)\W(s,y)]=\Lambda_{H_0}(t-s)\prod_{j=1}^{d}\Lambda_{H_j}(x_j-y_j),
\end{equation*}
where for any $z\in \mathbb R$,
\begin{equation}\label{e:def-lambda-h}
\Lambda_H(z)=
\begin{cases}
H(2H-1) |z|^{2H-2}, & \text{ if } H>1/2,\\
\delta_0(z), & \text{ if } H=1/2,\\
\dfrac12 \left(|z|^{2H}\right)'' , & \text{ if } H<1/2,
\end{cases}
\end{equation}
with $\delta_0$ being the Dirac delta function and $(\cdot)''$ being the second derivative in the sense of distribution.

The class of stochastic FDE \eqref{e:fde} encompasses some well-known models as special cases, such as
stochastic (fractional) heat equations. Stochastic FDEs with multiplicative Gaussian noise have been studied in the
literature, and we refer to \cite{chen2017nonlinear,chen2022interpolating,chen2024moments,chen2019nonlinear,guo2023stochastic}
for recent developments.  Regarding path properties for stochastic FDEs with additive space-time white noise, the
case of  $\alpha=2$, $\beta\in(0,1/2]$, $\gamma=1-\beta$ has been studied by Allouba and Xiao \cite{Xiao2017JDE},
where \eqref{e:fde} is also called a stochastic partial integro-differential equation.  On the other hand, compared with the
space-time white noise, fractional Gaussian noise is more effective to describe correlated random perturbations in
biology, physics and finance, and there has been extensive research on SPDEs with fractional Gaussian noise in
recent two decades. We refer to \cite{balan2016intermittency,balan2015spdes,Balan2008Thestochastic,Balan2010thestocha,
Chen2018Intermittency,Chen2018Temporal,Herrell2020sharp,Hu2012Feynman,hu2009stochastic,
hns2011Feynman,hu2017stochastic,song2017class}, which by no means is complete, for related results.

Path regularity of the solutions to some special cases of stochastic FDE \eqref{e:fde} has been studied. For example,
for the solutions of stochastic heat equations driven by space-time colored noise, Tudor and Xiao \cite{Tudor2017Sample}
established exact moduli of continuity and  Chung’s laws of the iterated logarithm (Chung's LIL) in space and in time,
respectively;  the results were extended in Herrell {\em et al.} \cite{Herrell2020sharp} to  (fractional) heat equations,
where the exact moduli of continuity and Chung’s LIL were obtained in space and time simultaneously; in \cite{Lee2023Chung-type},
Lee and Xiao obtained exact moduli of continuity and Chung’s LIL by establishing harmonizable-type integral representation
and strong local nondeterminism for the solution, improving the results in \cite{Tudor2017Sample, Herrell2020sharp}
by providing more precise information on the limiting constants.  For \eqref{e:fde} with $\alpha=2, \beta\in(0,1/2],
\gamma=1-\beta$ driven by the space-time white noise, Allouba and Xiao \cite{Xiao2017JDE} studied the temporal
and spatial regularity of the solution.

In this paper, we study the path regularity of the solution $\{u(t,x),t\ge0,x\in\R^d\}$ to stochastic FDE \eqref{e:fde}. More precisely,
we study temporal, spatial, and joint regularity properties of the solution $u(t,x)$, including the exact uniform and local moduli
of continuity, Chung's LIL, and small ball probabilities. These results generalize and strengthen the related results in
\cite{Xiao2017JDE,Herrell2020sharp,Tudor2017Sample}. To analyze the path regularity, we decompose the solution process
into the sum of two Gaussian random fields: one with stationary increments and the other with smooth paths. For the stationary-increment
term, we derive an explicit expression for its spectral measure and then prove the property of strong local nondeterminism.
For the smooth term, we prove that its sample path is smoother in space and in time, respectively than that of the stationary-increment
term. Under a stronger condition, we show that the sample path of the smooth term is even almost
surely continuously differentiable. Note that our results are consistent with the known results when
\eqref{e:fde} reduces, for instance, to a (fractional) stochastic heat equation.

In recent years, the small ball probability problem for SPDEs with multiplicative noise has attracted a lot of attention in probability
and related areas. Considering the stochastic heat equation with space-time white noise,  Athreya {\em et al.} \cite{Athreya2021Smallball} 
and Foondun {\em et al.} \cite{Foondun2023Smallball} studied the small ball probabilities under the uniform norm and a H\"older
semi-norm, respectively. Khoshnevisan {\em et al.} \cite{khoshnevisan2023small} obtained the existence of a small-ball constant under 
the uniform norm. Recently, Chen \cite{Chen2024Smallball} investigated the small ball probabilities for the stochastic heat equation 
with Gaussian noise which is white in time and colored in space. We also refer to Li and Shao \cite{Li2001Gaussian} for a survey on 
small ball probabilities for general Gaussian processes.

The rest of the paper is organized as follows. We prove the existence and uniqueness of the solution to \eqref{e:fde} in Section~\ref{se:ex}
and then investigate the regularity properties of the sample paths of the solution in Section~\ref{se:re}. In Section~\ref{se:smallball}, we 
obtain small ball probabilities. Appendix \ref{se:appendix} lists some technical lemmas used in the analysis.

\smallskip
Some notations that will be used throughout this paper are collected here. We use $c$ or $C$ to denote a generic constant whose value 
may change in different places. For functions $\varphi(x)$ and $\psi(x)$, we use $(\varphi\ast \psi)(x)$ to denote the convolution. The Fourier
transform is given by $\cF\varphi(\xi)=\int_{\R^d} e^{-ix\cdot \xi}\varphi(x)\ud x$ for $\varphi\in L^1(\R^d)$. We use $\tilde{\varphi}$ to
denote the reflection of $\varphi$ at zero, i.e. $\tilde{\varphi}(x)=\varphi(-x)$. The set of positive integers is denoted by $\N=\{ 1,2,\dots \}$.

\section{Existence of the solution}\label{se:ex}


\subsection{The fundamental solution of the fractional diffusion equation}
Consider the following deterministic fractional diffusion equation:
\begin{equation}\label{e:fde 0}
	\begin{cases}
		\partial^{\beta} v(t, x)=-\left(-\Delta\right)^{\alpha / 2} v(t, x)+ I_{0+}^{\gamma}\left[f(t, x)\right] , & t\in[0,T], x \in \mathbb{R}^d,   \\
		v(0, \cdot)=0,                                                                                       & \text { if } \beta \in(0,1], \\
		v(0, \cdot)=0, \quad \dfrac{\partial}{\partial t} v(0, \cdot)=0,                                 & \text { if } \beta \in(1,2),
	\end{cases}
\end{equation}
where $\alpha>0$, $\beta\in(0,2)$, and $\gamma\in[0,1)$ are constants.
According to  \cite[Theorem 2.8]{chen2024moments}, the fundamental solution to \eqref{e:fde 0} is
\begin{equation}\label{e:de-ptx}
     p(t,x) := \pi^{-d/2}|x|^{-d}t^{\beta+\gamma-1}\times \FoxH{2,1}{2,3}{\frac{ |x|^\alpha}{2^{\alpha} t^\beta}}
       {(1,1),\:(\beta+\gamma,\beta)}{(d/2,\alpha/2),\:(1,1),\:(1,\alpha/2)}, \quad \text{for } t>0,
\end{equation}
where $H^{m,n}_{p,q}(z)$ is the Fox $H$-function (see, e.g.,  \cite{kilbas2004h}) and   $p(t,x)\equiv0$ for $t\le0$.

The Fourier transform
of $p(t, x)$ in space is given by
\begin{equation}\label{eq Fourier p}
  \cF p(t,\cdot)(\xi)=t^{\beta+\gamma-1}E_{\beta,\beta+\gamma}\big(- t^\beta|\xi|^\alpha\big),
\end{equation}
where  $E_{a,b}(z)$ is the Mittag-Leffler function  (see, e.g., \cite[Sect.1.2]{podlubny1998fractional})
\begin{equation}\label{e:mlf}
  E_{a, b}(z):=\sum_{k=0}^{\infty}\frac{z^k}{\Gamma(a k+b)}\,.
\end{equation}

\begin{remark}
In the literature, it is often assumed that $\alpha\in(0,2]$. In this case, the fractional Laplacian $-\left(-\Delta\right)^{\alpha / 2}$ 
is the infinitesimal generator of the $\alpha$-stable process $X:=\{X_t, t\ge0\}$ with characteristic function 
$\E\left[\exp\left(i\langle \xi, X_t\rangle\right)\right]=\exp\left(-t|\xi|^\alpha\right)$. As a consequence, assuming  $\beta=1$ and 
 $\gamma=0$,  the fundamental solution $p(t,x)$ is the transition density of $X$ and hence one may apply probability results 
 of $X$ and/or the semigroup theory to study the corresponding SPDEs.

  In contrast, when $\alpha>2$, the fundamental solution $p(t,x)$  in general is not necessarily nonnegative (see \cite[Remark 1.2]{Chen2023Interpolating} or
  \cite[Remark 5.3]{chen2024moments}) and hence does not correspond to any stochastic process. This may cause extra difficulties in general studies of 
  SPDEs for the case $\alpha>2$. On the other hand, for all $\alpha>0$, the fundamental solution $p(t,x)$  has a unified Fourier transform \eqref{eq Fourier p} 
  and this allows us to study the path properties of the solutions by using Fourier analysis. It is also worth mentioning that this natural extension from $\alpha 
  \in(0,2]$ to all $\alpha>0$ enables us to cover more physical models such as the beam equation ($\alpha=4$).
\end{remark}

For any $z\in\mathbb{C}\setminus\{0\}, a, b, \lambda\in \mathbb C$ and $n\in\mathbb{N}$, the following identity holds (see
\cite[(1.10.7)]{kilbas2006theory}):
\begin{equation}\label{e:differential-E}
  \left(\frac{\partial}{\partial z}\right)^n \left(z^{b-1}E_{a,b}(\lambda z^\alpha) \right)=z^{b-n-1} E_{a,b-n}(\lambda z^\alpha).
\end{equation}

When $a\in(0,2)$, the Mittag-Leffler function has the following asymptotic property (\cite[(1.8.28)]{kilbas2006theory}): for
$n\in\mathbb{N}$,
  \begin{equation}\label{e:asym-MLF}
    E_{a,b}(z) =-\sum_{k=1}^{n}\frac{1}{\Gamma(b-ak)\cdot z^k} +o\left(\frac{1}{z^n}\right), \text{ as } z\to-\infty.
  \end{equation}
We will use the convention (see, e.g.,~\cite[(5.2.1) on p.~136]{Olver2010NIST})
\begin{equation}\label{e:gamma0}
  \frac{1}{\Gamma(z)}\equiv 0, \quad \text{for $z = 0, -1, -2, \dots$}.
\end{equation}
The following property is a direct consequence of \eqref{e:asym-MLF} (see also \cite[Theorem 1.6]{podlubny1998fractional}): for
all $a\in(0,2)$ and $z<0$,
\begin{equation}\label{e:bound-MLF}
  |E_{a,b}(z)|\le \dfrac{\hat{c}}{1+|z|},
\end{equation}
for some positive constant $\hat c$.

\subsection{The solution of the stochastic FDE}

Let $\cD\big(\R\times\R^d\big)$ denote the space of real-valued infinitely differentiable functions with compact support in
$\R\times\R^d$ and let $W$ be a zero-mean Gaussian family $\{W(\varphi),\varphi\in\cD\big(\R\times\R^d\big)\}$ whose
covariance structure is given by
\begin{equation}\label{e:covariance-noise}
  \E[W(\varphi)W(\psi)]=\int_{\R}\int_{\R}\int_{\R^d} \cF\varphi(t,\xi) \overline{\cF\psi(s,\xi)} \Lambda_{H_0}(t-s) \mu(\ud\xi)\ud t\ud s,
\end{equation}
where $\Lambda_{H_0}$ is defined in \eqref{e:def-lambda-h} and the measure $\mu(\ud\xi)=\prod_{j=1}^{d}c_{H_j}|\xi_j|^{1-2H_j}
ud\xi$ with $c_{H_j}=\frac{1}{2\pi}\Gamma(2H_j+1)\sin(\pi H_j)$. Let $\cH$ be the completion of $\cD\big(\R\times\R^d\big)$ with
the inner product
\begin{equation}\label{e:innerproduct-cH}
  \langle \varphi,\psi \rangle_\cH=\E[W(\varphi)W(\psi)].
\end{equation}
Now, the mapping $\varphi\mapsto W(\varphi)$ can be extended to all $\varphi\in \cH$ and it is called the Wiener integral. For
each $\varphi\in\cH$, we also write  $W(\varphi)$ in the form of integral
\begin{equation*}
 W(\varphi)=\int_{\R}\int_{\R^d}\varphi(t, x)W(\ud t,\ud x).
\end{equation*}

We define the solution of the equation \eqref{e:fde} as follows, adopted from \cite{Balan2008Thestochastic}:
\begin{definition}\label{de:solution}
  A jointly measurable process $\{u(t,x);(t,x)\in[0,T]\times\R^d\}$ is said to be a solution of the stochastic Cauchy problem
  \eqref{e:fde}, if for any $\varphi(t,x)\in\cD\big((0,T)\times\R^d\big)$,
  \begin{equation}\label{e:de-solution}
    \int_{0}^{T}\int_{\R^d} u(t,x)\varphi(t,x)\ud t\ud x=  \int_{0}^{T}\int_{\R^d} (\varphi\ast  \tilde{p})(t,x)W(\ud t,\ud x), \quad \text{a.s.}
  \end{equation}
  Here, we recall that $\tilde{p}(t,x):=p(-t,-x)$ and the symbol $\ast$ denotes the convolution.
\end{definition}

In the following theorem, we prove the existence and uniqueness of the solution to \eqref{e:fde}.
\begin{theorem}\label{th:ex-solution}
  Denote $g_{t,x}(s,y):=  p(t-s,x-y)$ and $H:=\sum_{j=1}^{d} H_j$. If the condition
  \begin{align}\label{e:DL}
     \alpha+\dfrac{\alpha}{\beta}\min\left(\gamma+H_0-1,0\right)>d-H
  \end{align}
  holds, then $\Vert g_{t,x} \Vert_\cH<\infty$ for any $t\in[0,T]$ and $x\in\R^d$. In this case, \eqref{e:fde} has a  unique solution
  $\{u(t,x);(t,x)\in[0,T]\times\R^d\}$ given by
  \begin{equation}\label{e:solution}
    u(t,x)=  \int_{0}^{t}\int_{\R^d} p(t-s,x-y)W(\ud s,\ud y).
  \end{equation}
\end{theorem}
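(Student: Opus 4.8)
The plan is to recast the condition $\|g_{t,x}\|_\cH<\infty$ as the convergence of an explicit integral in the spectral variable, and then to read off the solution and its uniqueness from the fact that the relevant Wiener integral is well defined. First I would compute $\|g_{t,x}\|_\cH^2$ directly from \eqref{e:covariance-noise}--\eqref{e:innerproduct-cH}. Because $g_{t,x}(s,y)=p(t-s,x-y)$, its spatial Fourier transform factorizes as $\cF g_{t,x}(s,\cdot)(\xi)=e^{-ix\cdot\xi}\,\cF p(t-s,\cdot)(\xi)$, and by \eqref{eq Fourier p} the last factor equals $(t-s)^{\beta+\gamma-1}E_{\beta,\beta+\gamma}(-(t-s)^\beta|\xi|^\alpha)$, which is real and vanishes for $s\ge t$. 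The phase $e^{-ix\cdot\xi}$ cancels against its conjugate, so that — writing $\rho=t-s$, $\rho'=t-s'$ and using that the relevant time variables lie in $[0,t]$ — the norm is independent of $x$ and equals
\begin{equation*}
\|g_{t,x}\|_\cH^2=\int_{\R^d}\Phi(\xi)\,\mu(\ud\xi),\qquad
\Phi(\xi):=\int_0^t\!\!\int_0^t \Psi_\xi(\rho)\,\Psi_\xi(\rho')\,\Lambda_{H_0}(\rho-\rho')\,\ud\rho\,\ud\rho',
\end{equation*}
where $\Psi_\xi(\rho):=\rho^{\beta+\gamma-1}E_{\beta,\beta+\gamma}(-\rho^\beta|\xi|^\alpha)$. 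Since each factor can be bounded by its modulus, the task reduces to bounding $\Phi(\xi)$ and integrating against $\mu$.

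For $H_0\in(1/2,1)$ I would insert $\Lambda_{H_0}(z)=H_0(2H_0-1)|z|^{2H_0-2}$ together with the decay bound \eqref{e:bound-MLF}, which gives $|\Psi_\xi(\rho)|\le\hat c\,\rho^{\beta+\gamma-1}/(1+\rho^\beta|\xi|^\alpha)$, and then rescale via $\rho=|\xi|^{-\alpha/\beta}u$. This turns $\Phi(\xi)$ into $|\xi|^{-\frac{2\alpha}{\beta}(\beta+\gamma+H_0-1)}$ times a double integral over $u,u'\in(0,\,t|\xi|^{\alpha/\beta})$ whose kernel is $u^{\beta+\gamma-1}(u')^{\beta+\gamma-1}|u-u'|^{2H_0-2}/[(1+u^\beta)(1+(u')^\beta)]$. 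The key point is a dichotomy in this kernel: it is integrable near the origin exactly when $\beta+\gamma+H_0>1$, and integrable as $u,u'\to\infty$ exactly when $\gamma+H_0<1$. Hence, when $\gamma+H_0<1$ the rescaled integral converges and $\Phi(\xi)\lesssim|\xi|^{-2[\alpha+\frac{\alpha}{\beta}(\gamma+H_0-1)]}$, whereas when $\gamma+H_0\ge1$ the upper cutoff $t|\xi|^{\alpha/\beta}$ produces additional growth that saturates the exponent at $\Phi(\xi)\lesssim|\xi|^{-2\alpha}$ (with a harmless logarithm in the borderline case $\gamma+H_0=1$). In both regimes one obtains $\Phi(\xi)\lesssim|\xi|^{-q}$ with $q=2\big[\alpha+\frac{\alpha}{\beta}\min(\gamma+H_0-1,0)\big]$. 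The degenerate case $H_0=1/2$ is simpler: $\Lambda_{1/2}=\delta_0$ collapses $\Phi(\xi)$ to $\int_0^t\Psi_\xi(\rho)^2\,\ud\rho$, and the same scaling yields the identical exponent $q$.

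I would then split the spatial integral at $|\xi|=1$. On $\{|\xi|\le1\}$ the bound $|E_{\beta,\beta+\gamma}(\cdot)|\le\hat c$ shows $\Phi$ is uniformly bounded provided $\beta+\gamma+H_0>1$; since each weight $|\xi_j|^{1-2H_j}$ is locally integrable (as $1-2H_j>-1$), this part is finite. It is worth noting that \eqref{e:DL} forces $\beta+\gamma+H_0>1$: because $H_j<1$ gives $d-H>0$, the inequality $\alpha+\frac{\alpha}{\beta}\min(\gamma+H_0-1,0)>d-H>0$ together with $\alpha,\beta>0$ implies $\beta+\gamma+H_0-1>0$ in both cases of the minimum. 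On $\{|\xi|>1\}$ I would use $\Phi(\xi)\lesssim|\xi|^{-q}$ and observe that $|\xi|^{-q}\prod_j|\xi_j|^{1-2H_j}$ is homogeneous of degree $-q+d-2H$ with an angular part integrable over the sphere, so the integral converges at infinity precisely when $q>2(d-H)$ — which is exactly condition \eqref{e:DL}. This proves $\|g_{t,x}\|_\cH<\infty$.

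With the norm finite, $u(t,x):=\int_0^t\int_{\R^d}p(t-s,x-y)\,W(\ud s,\ud y)$ is a well-defined Gaussian random variable, and to verify \eqref{e:de-solution} I would pair it with $\varphi\in\cD((0,T)\times\R^d)$ and apply a stochastic Fubini theorem to exchange the Lebesgue and Wiener integrals; since $\int_0^T\int_{\R^d}p(t-s,x-y)\varphi(t,x)\,\ud t\,\ud x=(\varphi\ast\tilde p)(s,y)$, the left-hand side of \eqref{e:de-solution} becomes $\int_0^T\int_{\R^d}(\varphi\ast\tilde p)(s,y)\,W(\ud s,\ud y)$, which is its right-hand side. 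Uniqueness follows because any two solutions pair identically against every test function and hence coincide for a.e. $(t,x)$ almost surely. The principal obstacle is the sharp control of $\Phi(\xi)$: tracking how the small-$\rho$ (equivalently large-$|\xi|$) singularity interacts with the finite time horizon is what produces the exact exponent $q$ and, in particular, explains the appearance of $\min(\gamma+H_0-1,0)$ in \eqref{e:DL}; by comparison the stochastic Fubini and uniqueness steps are routine once the integrability above is in hand.
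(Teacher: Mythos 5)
Your proposal is correct, and it reaches the finiteness of $\Vert g_{t,x}\Vert_\cH$ by a genuinely different mechanism than the paper. The paper never estimates the temporal double integral pointwise in $\xi$: it applies Lemma \ref{le:B.2} to replace the kernel $|r_1-r_2|^{2H_0-2}$ by an $L^{1/H_0}$-norm in time, then Minkowski's integral inequality to pull the $\mu(\ud\xi)$-integral inside, and finally one change of variables $\tilde\xi_j=(t-s)^{\beta/\alpha}\xi_j$ to decouple everything into $\big(\int_0^t(t-s)^{[2\beta+2\gamma-2-\frac{\beta}{\alpha}(2d-2H)]/(2H_0)}\ud s\big)^{2H_0}$ times $\int E_{\beta,\beta+\gamma}^2(-|\xi|^\alpha)\mu(\ud\xi)$; condition \eqref{e:DL} is exactly what makes both factors finite (the time factor when $\gamma+H_0<1$, the frequency factor via $\alpha>d-H$). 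You instead fix $\xi$, rescale $\rho=|\xi|^{-\alpha/\beta}u$, and extract the pointwise spectral bound $\Phi(\xi)\lesssim|\xi|^{-q}$ with $q=2\big[\alpha+\frac{\alpha}{\beta}\min(\gamma+H_0-1,0)\big]$, the minimum emerging from whether the rescaled kernel is integrable at infinity ($\gamma+H_0<1$) or the cutoff $t|\xi|^{\alpha/\beta}$ saturates the decay at $|\xi|^{-2\alpha}$. Your route is somewhat longer (the dichotomy and the cross-region $u$ small/$u'$ large estimates need care, and you should also note the $t$-dependence of the constant in the saturated regime), but it buys three things: it makes the origin of $\min(\gamma+H_0-1,0)$ in \eqref{e:DL} completely transparent; it treats $H_0=\tfrac12$ and $H_0>\tfrac12$ uniformly, whereas Lemma \ref{le:B.2} requires $H_0>\tfrac12$; and a pointwise two-sided version of the same scaling is precisely how one would attack necessity, which the paper only establishes for $H_0=\tfrac12$ in the subsequent proposition. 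For the verification of \eqref{e:de-solution}, your appeal to a stochastic Fubini theorem is legitimate but is a black box the paper avoids: the paper proves the needed interchange by hand, expanding $\E\big|\int u\varphi-W(\varphi\ast\tilde p)\big|^2$ and showing all three second-moment terms equal $\Vert\varphi\ast\tilde p\Vert_\cH^2$ via deterministic Fubini; if you keep the Fubini route, you should record that $(t,x)\mapsto\varphi(t,x)g_{t,x}$ is Bochner integrable in $\cH$ (which follows from the uniform bound on $\Vert g_{t,x}\Vert_\cH$ over $[0,T]$ and the compact support of $\varphi$) and that its integral is identified with $\varphi\ast\tilde p$ in $\cH$.
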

\begin{proof}  The uniqueness of the solution is a direct consequence of the definition \eqref{e:de-solution}.  Now we prove that
$u(t,x)$ given by \eqref{e:solution} is well-defined and it satisfies \eqref{e:de-solution}. By using \eqref{e:innerproduct-cH},
\eqref{eq Fourier p}, Lemma~\ref{le:B.2} and  Minkowski’s inequality,   there exists a positive constant $C_{H_0}$ such that
  \begin{align*}
    &\Vert g_{t,x} \Vert_\cH^2 =   \int_{[0,t]^2}\int_{\R^d} \cF p(t-s,x-\cdot)(\xi) \overline{\cF p(t-r,x-\cdot)(\xi)} |s-r|^{2H_0-2} \ud r\ud s\mu(\ud\xi) \\
    &\le C_{H_0}  \int_{\R^d}  \bigg( \int_{0}^{t} \Big| \cF p(t-s,x-\cdot)(\xi)  \Big|^{\frac{1}{H_0}}  \ud s \bigg)^{2H_0}\mu(\ud\xi)     \\
    &\le C_{H_0}\bigg( \int_{0}^{t}\bigg(\int_{\R^d} \Big| \cF p(t-s,x-\cdot)(\xi)  \Big|^2 \mu(\ud\xi) \bigg)^\frac{1}{2H_0}\ud s \bigg)^{2H_0}\\
    &= C_{H_0}\bigg ( \int_{0}^{t}\bigg( (t-s)^{2\beta+2\gamma-2}\int_{\R^d} E_{\beta,\beta+\gamma}^2\big(-(t-s)^\beta|\xi|^\alpha \big)
    \prod_{j=1}^{d}c_{H_j} |\xi_j|^{1-2H_j} \ud\xi \bigg)^\frac{1}{2H_0}\ud s \bigg)^{2H_0}\\
    &= C_{H_0} \bigg ( \int_{0}^{t}\bigg( (t-s)^{2\beta+2\gamma-2-\frac{\beta}{\alpha}(2d-2H)}  \bigg)^\frac{1}{2H_0}\ud s \bigg)^{2H_0}\
    \times \int_{\R^d} E_{\beta,\beta+\gamma}^2\big(-|\xi|^\alpha \big) \prod_{j=1}^{d}c_{H_j} |\xi_j|^{1-2H_j} \ud\xi,
  \end{align*}
  where we have used the change of variables $\tilde{\xi_j}=(t-s)^{\beta/\alpha}\xi_j$ in the last step. By \eqref{e:bound-MLF}, we have
  \begin{align*}
    \Vert g_{t,x} \Vert_\cH^2
    &\le \hat{c}C_{H_0}\bigg ( \int_{0}^{t}\bigg( (t-s)^{2\beta+2\gamma-2-\frac{\beta}{\alpha}(2d-2H)} \bigg)^\frac{1}{2H_0}\ud s \bigg)^{2H_0}
    \times \int_{\R^d} \frac{\prod_{j=1}^{d}c_{H_j} |\xi_j|^{1-2H_j}}{1+|\xi|^{2\alpha}} \ud\xi.
    \end{align*}
Then, applying Lemma \ref{le:space-integral}, we have $\Vert g_{t,x} \Vert_\cH^2<\infty$ under  the condition \eqref{e:DL}.

Next, to validate \eqref{e:de-solution},  we will show that for all $\varphi\in\cD\big((0,T)\times\R^d\big)$,
  \begin{equation*}
    \E\left|\int_{0}^{T}\int_{\R^d} u(t,x)\varphi(t,x)\ud t\ud x- W(\varphi\ast \tilde{p})  \right|^2 =0,
  \end{equation*}
by verifying that
  \begin{align*}
& \E\left|\int_{0}^{T}\int_{\R^d} u(t,x)\varphi(t,x)\ud t\ud x\right|^2= \E \left(W(\varphi\ast \tilde{p})\int_{0}^{T}\int_{\R^d}
u(t,x)\varphi(t,x)\ud t\ud x  \right)= \E\left| W(\varphi\ast \tilde{p})  \right|^2.
  \end{align*}
  Using \eqref{e:innerproduct-cH} and \eqref{e:solution}, we see that
  \begin{equation}\label{e:co-utx-usy}
    \E\left[u(t,x)u(s,y)\right]=\E[W(g_{t,x}) W(g_{s,y})]= \langle g_{t,x},g_{s,y} \rangle_\cH,
  \end{equation}
  and
  \begin{equation}\label{e:co-Wphip-utx}
    \E\left[W(\varphi\ast \tilde{p})u(t,x)\right]=\E\left[W(\varphi\ast \tilde{p})W(g_{t,x})\right]= \langle \varphi\ast \tilde{p},g_{t,x}\rangle_\cH.
  \end{equation}
  By Fubini’s theorem and \eqref{e:co-utx-usy}, we get
  \begin{align*}
    &\E\left|\int_{0}^{T}\int_{\R^d} u(t,x)\varphi(t,x)\ud t\ud x \right|^2\\
    =&\, \int_{[0,T]^2}\int_{\R^{2d}}\varphi(t,x)\varphi(s,y)\langle g_{t,x},g_{s,y} \rangle_\cH \ud s\ud t\ud x\ud y \\
    =&\,\int_{[0,T]^2}\int_{\R^{2d}}\varphi(t,x)\varphi(s,y) \bigg( \int_{[0,T]^2}\int_{\R^d} \cF g_{t,x}(r_1,\xi) \overline{\cF g_{s,y}(r_2,\xi)}
    \Lambda_{H_0}(r_1-r_2) \mu(\ud\xi)\ud r_1\ud r_2 \bigg)\ud s\ud t\ud x\ud y \\
    =&\, \int_{[0,T]^2}\int_{\R^d}\bigg( \int_{[0,T]^2}\int_{\R^{2d}} \varphi(t,x)\varphi(s,y) \cF g_{t,x}(r_1,\xi) \overline{\cF g_{s,y}(r_2,\xi)}
    \ud s\ud t\ud x\ud y\bigg) \Lambda_{H_0}(r_1-r_2) \mu(\ud\xi)\ud r_1\ud r_2\\
   =&\,  \int_{[0,T]^2}\int_{\R^d} \cF (\varphi\ast \tilde{p})(r_1,\xi) \overline{\cF (\varphi\ast \tilde{p})(r_2,\xi)} \Lambda_{H_0}(r_1-r_2)
   \mu(\ud\xi)\ud r_1\ud r_2\\
    =&\, \Vert \varphi\ast \tilde{p}\Vert_\cH^2.
  \end{align*}
  Similarly, by Fubini’s theorem and \eqref{e:co-Wphip-utx}, we also have
  \begin{align*}
     & \E\left[ W(\varphi\ast \tilde{p})\int_{0}^{T}\int_{\R^d} u(t,x)\varphi(t,x)\ud t\ud x \right]\\
     =&\, \int_{0}^{T}\int_{\R^d} \langle \varphi\ast \tilde{p},g_{t,x}\rangle_\cH\: \varphi(t,x)\ud t\ud x\\
    =&\, \int_{0}^{T}\int_{\R^d} \bigg( \int_{[0,T]^2}\int_{\R^d} \cF (\varphi\ast \tilde{p})(r_1,\xi) \overline{\cF g_{t,x}(r_2,\xi)}
    \Lambda_{H_0}(r_1-r_2) \mu(\ud\xi)\ud r_1\ud r_2 \bigg)   \varphi(t,x)\ud t\ud x\\
     =&\,  \int_{[0,T]^2}\int_{\R^d} \cF (\varphi\ast \tilde{p})(r_1,\xi) \bigg(\int_{0}^{T}\int_{\R^d} \overline{\cF g_{t,x}(r_2,\xi)}
     \varphi(t,x)\ud t\ud x \bigg) \Lambda_{H_0}(r_1-r_2) \mu(\ud\xi)\ud r_1\ud r_2\\
     =&\, \int_{[0,T]^2}\int_{\R^d} \cF (\varphi\ast \tilde{p})(r_1,\xi) \overline{\cF (\varphi\ast \tilde{p})(r_2,\xi)} \Lambda_{H_0}(r_1-r_2)
     \mu(\ud\xi)\ud r_1\ud r_2\\
    =&\, \Vert \varphi\ast \tilde{p}\Vert_\cH^2.
  \end{align*}
  Therefore,
  \begin{align*}
     & \E\left|\int_{0}^{T}\int_{\R^d} u(t,x)\varphi(t,x)\ud t\ud x- W(\varphi\ast\tilde{p})  \right|^2 \\
     &=\E\left|\int_{0}^{T}\int_{\R^d} u(t,x)\varphi(t,x)\ud t\ud x\right|^2 -2\E\left[ W(\varphi\ast\tilde{p})\int_{0}^{T}\int_{\R^d} u(t,x)\varphi(t,x)
     \ud t\ud x \right] + \E\left|W(\varphi\ast\tilde{p}) \right|^2\\
     &= \Vert \varphi\ast\tilde{p}\Vert_\cH^2 -2 \Vert \varphi\ast\tilde{p}\Vert_\cH^2+ \E\left|W(\varphi\ast\tilde{p}) \right|^2=0.
  \end{align*}
  This completes the proof.
\end{proof}

\begin{remark}
  When $\beta=1$ and $\gamma=0$, the stochastic FDE \eqref{e:fde} reduces to the fractional stochastic heat equation, and the
  condition \eqref{e:DL} is equivalent to $\alpha H_0 >d-H$ which coincides with \cite[Theorem 2.2]{Herrell2020sharp}. Note that
  the condition \eqref{e:DL}  is also shown to be necessary in \cite[Theorem 2.2]{Herrell2020sharp}  for the fractional stochastic heat
  equation.
\end{remark}

\begin{proposition}
  When the noise is white in time (i.e. $H_0=\frac12$), the condition \eqref{e:DL} is also necessary.
\end{proposition}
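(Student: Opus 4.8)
The plan is to exploit the fact that, unlike the general case $H_0>1/2$ where the proof of Theorem~\ref{th:ex-solution} only furnishes an \emph{upper} bound through Minkowski's inequality, the choice $H_0=1/2$ makes $\Lambda_{H_0}=\delta_0$ in \eqref{e:def-lambda-h}, so that the double temporal integral defining $\Vert g_{t,x}\Vert_\cH^2$ collapses to the \emph{exact} identity
\begin{equation*}
  \Vert g_{t,x}\Vert_\cH^2=\int_0^t\int_{\R^d}\big|\cF p(t-s,\cdot)(\xi)\big|^2\,\mu(\ud\xi)\,\ud s .
\end{equation*}
Since $\cF p(t-s,x-\cdot)(\xi)=e^{-ix\cdot\xi}\cF p(t-s,\cdot)(\xi)$, this norm is independent of $x$. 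Substituting \eqref{eq Fourier p}, setting $r=t-s$, and performing the exact change of variables $\tilde\xi_j=r^{\beta/\alpha}\xi_j$ as in the proof of Theorem~\ref{th:ex-solution}, I would factor
\begin{equation*}
  \Vert g_{t,x}\Vert_\cH^2=C\int_0^t r^{\,2\beta+2\gamma-2-\frac{\beta}{\alpha}(2d-2H)}\,\ud r,\qquad C:=\int_{\R^d}E_{\beta,\beta+\gamma}^2\big(-|\xi|^\alpha\big)\prod_{j=1}^d c_{H_j}|\xi_j|^{1-2H_j}\,\ud\xi\in(0,+\infty].
\end{equation*}
The decisive point is that this is a genuine equality, so $\Vert g_{t,x}\Vert_\cH^2<\infty$ \emph{if and only if} both $C<\infty$ and the temporal exponent exceeds $-1$.

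A direct computation shows that the $r$-integral converges near $r=0$ precisely when $2\beta+2\gamma-2-\frac{\beta}{\alpha}(2d-2H)>-1$, which rearranges to $\alpha+\frac{\alpha}{\beta}(\gamma-\tfrac12)>d-H$; since $H_0=\tfrac12$, this is exactly condition \eqref{e:DL} in the regime $\gamma<1/2$, where $\min(\gamma+H_0-1,0)=\gamma-\tfrac12$. The convergence of the spatial constant $C$ at infinity, on the other hand, is governed by the decay of the Mittag-Leffler factor. I therefore split the necessity argument according to the sign of $\gamma-\tfrac12$.

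If $\gamma<1/2$ and \eqref{e:DL} fails, then the temporal exponent is $\le -1$, so the $r$-integral already diverges at $r=0$; as $C>0$ irrespective of its finiteness, this forces $\Vert g_{t,x}\Vert_\cH^2=+\infty$ for every $t>0$, and no spatial lower bound is needed. If instead $\gamma\ge1/2$, then $\min(\gamma+H_0-1,0)=0$ and \eqref{e:DL} reduces to $\alpha>d-H$; when it fails, i.e. $\alpha\le d-H$, I must show $C=+\infty$. This is the crux of the argument and the one step requiring \emph{two-sided} control of the Mittag-Leffler function: whereas the sufficiency proof used only the upper bound \eqref{e:bound-MLF}, here I would invoke the asymptotic expansion \eqref{e:asym-MLF} with $n=1$, which gives $E_{\beta,\beta+\gamma}(-\rho)\sim \frac{1}{\Gamma(\gamma)\,\rho}$ as $\rho\to+\infty$. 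Crucially $\gamma\ge1/2>0$ guarantees $\Gamma(\gamma)\in(0,\infty)$, so the leading coefficient does not vanish and $E_{\beta,\beta+\gamma}^2(-|\xi|^\alpha)\gtrsim|\xi|^{-2\alpha}$ for large $|\xi|$. A dyadic shell estimate, using that the mass of $\prod_{j}|\xi_j|^{1-2H_j}\,\ud\xi$ on $\{|\xi|\sim R\}$ scales like $R^{2d-2H}$, then yields $C\gtrsim\int_{|\xi|>1}|\xi|^{-2\alpha}\prod_{j}|\xi_j|^{1-2H_j}\,\ud\xi$, whose dyadic sum $\sum_{k\ge0}2^{k(2d-2H-2\alpha)}$ diverges exactly because $2d-2H-2\alpha\ge0$ when $\alpha\le d-H$. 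Hence $C=+\infty$ and $\Vert g_{t,x}\Vert_\cH^2=+\infty$.

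In both cases the failure of \eqref{e:DL} yields $\Vert g_{t,x}\Vert_\cH^2=+\infty$ for all $t>0$, so $g_{t,x}\notin\cH$ and the Wiener integral \eqref{e:solution} defining $u(t,x)$ does not exist; by Definition~\ref{de:solution} there is then no solution, proving that \eqref{e:DL} is necessary when $H_0=\tfrac12$. I expect the only delicate point to be the spatial lower bound of the preceding paragraph, namely converting the Mittag-Leffler asymptotic into the divergence of $C$ in the regime $\gamma\ge1/2$; everything else is the same exact computation as in Theorem~\ref{th:ex-solution}, now read as an equality rather than an inequality.
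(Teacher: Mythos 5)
Your proposal is correct, and its core computation is the same as the paper's: when $H_0=\tfrac12$ the temporal covariance collapses and $\Vert g_{t,x}\Vert_\cH^2$ factorizes exactly into a time integral times a spatial constant (the paper's \eqref{e:dalang-g-infty}), after which the Mittag--Leffler asymptotics \eqref{e:asym-MLF} decide divergence. The organizational difference is worth noting, though. The paper establishes the full equivalence ``$\Vert g_{t,x}\Vert_\cH<\infty\Leftrightarrow$ \eqref{e:DL}'' and therefore splits cases at $\gamma=0$ versus $\gamma>0$, because that is where the leading asymptotic coefficient $1/\Gamma(\gamma)$ degenerates: for $\gamma=0$ the convention \eqref{e:gamma0} annihilates the first-order term, $E_{\beta,\beta}(-r^\alpha)$ decays like $r^{-2\alpha}$, and the paper must then argue that the temporal condition alone implies convergence of the spatial integral (this is the content of \eqref{e:necess3}). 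You instead prove only the contrapositive --- failure of \eqref{e:DL} forces $\Vert g_{t,x}\Vert_\cH^2=+\infty$ --- which is all that necessity requires, and you split at $\gamma=\tfrac12$, exactly where the minimum in \eqref{e:DL} switches. This buys a genuine simplification: in the regime $\gamma<\tfrac12$ (which contains the delicate point $\gamma=0$) you only need $C>0$, never any upper or lower asymptotic control of $C$, since divergence of the time integral already finishes the argument; the degenerate $\gamma=0$ asymptotics never enter. In the regime $\gamma\ge\tfrac12$ you have $\Gamma(\gamma)\in(0,\infty)$, so the first-order expansion suffices, and your dyadic-shell lower bound is an equivalent substitute for the paper's spherical-coordinate reduction \eqref{e:changevariable}. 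What the paper's longer route buys is the explicit two-sided characterization \eqref{e:necess1} of finiteness of the norm; what yours buys is a leaner proof of precisely the stated proposition.
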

\begin{proof}
  When $H_0=\frac12$, by    \eqref{eq Fourier p} and  a change of variables $\tilde{\xi_j}=(t-s)^{\beta/\alpha}\xi_j$, we have
  \begin{align}\label{e:dalang-g-infty}
    \Vert g_{t,x} \Vert_\cH^2 &= \int_{0}^{t}\int_{\R^d} \Big| \cF p(t-s,x-\cdot)(\xi)  \Big|^2 \mu(\ud\xi) \ud s \notag \\
    &=  \int_{0}^{t} (t-s)^{2\beta+2\gamma-2}\ud s \int_{\R^d} E_{\beta,\beta+\gamma}^2\big(-(t-s)^\beta|\xi|^\alpha \big)
    \prod_{j=1}^{d} c_{H_j}|\xi_j|^{1-2H_j} \ud\xi \notag\\
    &= \int_{0}^{t} (t-s)^{2\beta+2\gamma-2-\frac{\beta}{\alpha}(2d-2H)}\ud s \int_{\R^d} E_{\beta,\beta+\gamma}^2\big(-|\xi|^\alpha \big)
    \prod_{j=1}^{d} c_{H_j} |\xi_j|^{1-2H_j} \ud\xi.
  \end{align}
  For the second integral, by the change of variables $\xi_j=rw_j$ in \eqref{e:changevariable}, we have
  \begin{align*}
    &\int_{\R^d} E_{\beta,\beta+\gamma}^2\big(-|\xi|^\alpha \big) \prod_{j=1}^{d} c_{H_j}|\xi_j|^{1-2H_j} \ud\xi\\
    &=\int_{0}^{\infty} E_{\beta,\beta+\gamma}^2\big(- r^\alpha \big) r^{2d-2H-1}\ud r \times \int_{\mathbb{S}^{d-1}}
     \prod_{j=1}^{d}c_{H_j}|w_j|^{1-2H_j} \sigma(\ud w),
  \end{align*}
  where $\sigma(\ud w)$ is the uniform measure on the $(d-1)$-dimensional unit sphere $\mathbb{S}^{d-1}$ and the second integral
  is finite due to Lemma \ref{le:space-integral}. Then, by \eqref{e:dalang-g-infty}, we know that $\Vert g_{t,x} \Vert_\cH^2<\infty$ is equivalent to
  \begin{equation}\label{e:necess1}
    \begin{cases}
     \displaystyle 2\beta+2\gamma-2-\frac{\beta}{\alpha}(2d-2H)>-1,\\%
      \displaystyle \int_{0}^{\infty} E_{\beta,\beta+\gamma}^2\big(- r^\alpha \big) r^{2d-2H-1}\ud r<\infty.
    \end{cases}
  \end{equation}
  Notice that $E_{\beta,\beta+\gamma}^2\big(- |\cdot|^\alpha \big) |\cdot|^{2d-2H-1}$ is locally integrable. By the asymptotic property
  \eqref{e:asym-MLF}, as $r\to\infty$,
  \begin{equation}\label{e:asym-necess}
    E_{\beta,\beta+\gamma}\big(- r^\alpha \big)= \frac{1}{\Gamma(\gamma) r^\alpha} + o(r^{-1}).
  \end{equation}

  For the case $\gamma>0$, the condition \eqref{e:necess1} is equivalent to
  \begin{equation}\label{e:necess2}
    \begin{cases}
      2\alpha+\frac{\alpha}{\beta}(2\gamma-1)>2d-2H,  \\
      2\alpha>2d-2H,
    \end{cases}
  \end{equation}
  which is straightforward by \eqref{e:asym-necess}.

  For the case $\gamma=0$, the condition \eqref{e:necess1} is equivalent to
  \begin{align}\label{e:necess3}
     & 2\alpha-\frac{\alpha}{\beta}>2d-2H,
  \end{align}
  where the equivalence follows from the fact that the first condition $2\alpha-\frac{\alpha}{\beta}>2d-2H$ in \eqref{e:necess1}
  implies the second condition $\int_{0}^{\infty} E_{\beta,\beta+\gamma}^2\big(- r^\alpha \big) r^{2d-2H-1}\ud r<\infty$ by
  \eqref{e:asym-necess} and \eqref{e:gamma0}.

  Therefore, combining \eqref{e:necess2} with \eqref{e:necess3}, we obtain that  the condition \eqref{e:necess1} is equivalent to
  \begin{align*}
   2\alpha+\frac{\alpha}{\beta}\min\left(2\gamma-1,0\right)>2d-2H.
  \end{align*}
  This completes the proof.
\end{proof}

\section{Regularity of the solution}\label{se:re}

In this section, we investigate the sample path regularity of the solution to stochastic FDE \eqref{e:fde} in space-time
 (Theorem \ref{th:modulus-both}), in time (Theorem \ref{th:modulus-time}), and in space (Theorem \ref{th:modulus-space}),
  respectively. 

\subsection{Space-time joint regularity}\label{se:joint-r}

In this subsection, we study the space-time joint regularity of $u(t,x)$, see Theorem \ref{th:modulus-both}. Similar to
the string processes studied in  Mueller and Tribe \cite{Mueller2002Hitting} (see also \cite{Herrell2020sharp,Tudor2017Sample}),
 the solution $\{u(t,x), (t, x) \in [0, \infty)\times  \R^d\}$ has the following decomposition
\begin{equation}\label{e:split-u}
  u(t,x)=U(t,x)-V(t,x),
\end{equation}
where for $ (t, x) \in [0, \infty)\times  \R^d$
\begin{equation}\label{e:de-U}
\begin{split}
  U(t,x)  = & \int_{-\infty}^{0}\int_{\R^d} \left[p(t-r,x-y)-p(-r,-y)\right] W(\ud r,\ud y)\\
   &+\int_{0}^{t}\int_{\R^d} p(t-r,x-y) W(\ud r,\ud y)
\end{split}
\end{equation}
and
\begin{equation}\label{e:de-V}
 V(t,x)=\int_{-\infty}^{0}\int_{\R^d}  [p(t-r,x-y)-p(-r,-y)] W(\ud r,\ud y).
\end{equation}
Notice that if we rewrite $U(t,x) $ in \eqref{e:de-U} as
\begin{equation}\label{e:de-U2}
U(t,x) = \int_{\R}\int_{\R^d} [p\left((t-r)_+,x-y\right)-p\left((-r)_+,-y\right)] W(\ud r,\ud y),
\end{equation}
where $a_+ = \max\{0, a\}$ for every $a \in \R$, then  $U(t, x)$ is well defined for all $(t, x)  \in \R \times \R^d$. 

We will show that the sample path of the random field $V(t,x)$ is smoother than that of $U(t,x)$ on $(0,\infty)\times\R^d$,
hence the regularity properties of $u(t,x)$ on  $(0,\infty)\times\R^d$ solely depend on those of $U(t,x)$. To derive the sharp
regularity, we first prove that the random field $\{U(t,x), (t, x)  \in \R \times \R^d\}$ has stationary increments and it satisfies the property
of strong local nondeterminism (SLND, for short) (see Lemma \ref{le:sLND-U}). Then, we show the sample path of $V(t,x)$ is
smoother than that of $U(t,x)$ on $(0,\infty)\times\R^d$ in Proposition \ref{prop:reg-V} and Proposition \ref{prop:reg-V-add}.
Finally, we obtain the joint regularity in Theorem \ref{th:modulus-both} for the solution $u(t,x)$.

To study the regularity properties of the Gaussian random field $\{U(t,x), (t, x) \in \R \times \R^d\}$
we recall some facts about Gaussian random fields with stationary increments.  A real-valued  random
field $X=\{X(\t), \t \in\R^N\}$ is
said to have stationary increments if for every $\h\in\R^N$,
  \begin{equation*}
  \{X(\t+h)-X(\h), \, \t \in\R^N\} \overset{d}{=} \{X(\t) - X(0),\, \t \in\R^N\},
  \end{equation*}
  where $\overset{d}{=}$ means equality in distribution.  According to Yaglom \cite{yaglom1957some},  if $X$ satisfies $X(0)=0$
  and its covariance function $R(\t,\t')=\E[X(\t )X(\t')]$ is continuous,  then $R(\t,\t')$ can be represented as
  \begin{equation}\label{Eq:R}
  R(\t, \t')= \int_{\R^N}\left(e^{i\langle \t,\xi\rangle}-1\right)\left(e^{-i\langle \t',\xi\rangle}-1\right) F(\ud\xi)+\langle \tau,Q\tau' \rangle,
  \end{equation}
  where $Q=(q_{ij})$ is an $N\times N$ non-negative definite matrix and $F(\ud\xi)$  is a nonnegative symmetric measure on
  $\R^d\setminus\{0\}$ satisfying
  \begin{equation}\label{Eq:F}
    \int_{\R^N}\frac{|\xi|^2}{1+|\xi|^2}F(\ud\xi)<\infty.
  \end{equation}
The measure $F$ and its density $f(\xi)$ (if it exists) are called the spectral measure and spectral density of $X$, respectively.
The converse is also true. Namely, if a Gaussian random field $X$ satisfies $X(0) = 0$ and its covariance function is given by (\ref{Eq:R})
with $F$ satisfying (\ref{Eq:F}), then $X$ has stationary increments with spectral measure $F$. We will apply these facts to
$\{U(t,x), (t, x) \in \R \times \R^d\}$ defined by \eqref{e:de-U2}.  In this case, $\t = (t, x)$ and $N = 1+d$.

Denote
\begin{equation}\label{e:theta12}
\begin{split}
 & \theta_1:=\beta+\gamma+H_0-1-\frac{\beta}{\alpha}(d-H);\\
 & \theta_2:=\frac\alpha\beta\theta_1=\alpha-d+H+\frac{\alpha}{\beta}(\gamma+H_0-1).
 \end{split}
 \end{equation}
Clearly, both $\theta_1$ and $\theta_2$ are strictly positive under the condition \eqref{e:DL}.

The following proposition is a key ingredient to study the sample path regularity of $U(t,x)$.
\begin{lemma}\label{le:sLND-U}
Assume the condition  \eqref{e:DL} holds and
\begin{equation}\label{e:U-exist}
  \begin{cases}
    \theta_1<1,  \\
    \theta_2<1,  \\
    \gamma<1-H_0.
  \end{cases}
\end{equation}
\begin{enumerate}
  \item The Gaussian random field $U=\{U(t,x),(t,x)\in\R\times\R^d\}$ has stationary increments with spectral measure
  \begin{equation}\label{e:spectral-measure-U}
  F_U(\ud\tau,\ud\xi)=\frac{C_{H_0}|\tau|^{1-2\gamma-2H_0}}{|\tau|^{2\beta}+2|\xi|^\alpha|\tau|^\beta\cos\left(\frac{\pi\beta}{2}\right)
  +|\xi|^{2\alpha}} \mu(\ud\xi) \ud \tau.
  \end{equation}
  \item The Gaussian random field $U$ satisfies the strong local nondeterminism property: there exists
  a constant $C>0$ such that for all $n\in\N$ and $(t,x),(t^1,x^1),\dots,(t^n,x^n)\in\R\times\R^d$,
  \begin{equation}\label{e:sLND}
    \Var\big(U(t,x)|U(t^1,x^1),\dots,U(t^n,x^n)  \big)\ge C\min_{k=0,1,\dots,n}\bigg( |t-t^k|^{\theta_1}+\sum_{i=1}^d|x_i-x^{k}_i|^{\theta_2} \bigg)^2,
  \end{equation}
  where $t^0=x^0_i=0$, $i=1,\dots, d$.
  \item 
  There exist constants $c, C>0$ such that
  \begin{equation}\label{e:bounds-U}
    c\left(|t-s|^{2\theta_1}+|x-y|^{2\theta_2}\right) \le \E\left[ |U(t,x)-U(s,y)|^2 \right]\le C\left(|t-s|^{2\theta_1}+|x-y|^{2\theta_2}\right)
  \end{equation}
  for all $(t,x),(s,y)\in\R\times\R^d$.
\end{enumerate}
\end{lemma}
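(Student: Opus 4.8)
\emph{Plan.} I would prove the three parts in the order (1), (3), (2), since both the increment bounds and the strong local nondeterminism rest on the explicit, scaling-friendly spectral density produced in part (1). For part (1) I would first express the covariance of $U$ through the inner product on $\cH$. With $\Phi_{t,x}(r,y):=p((t-r)_+,x-y)-p((-r)_+,-y)$ one has $\E[U(t,x)U(s,y)]=\langle\Phi_{t,x},\Phi_{s,y}\rangle_\cH$. Taking the spatial Fourier transform and using \eqref{eq Fourier p}, and writing $G(t,\xi):=t_+^{\beta+\gamma-1}E_{\beta,\beta+\gamma}(-t_+^\beta|\xi|^\alpha)$, gives $\cF\Phi_{t,x}(r,\cdot)(\xi)=e^{-ix\cdot\xi}G(t-r,\xi)-G(-r,\xi)$. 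The decisive computation is the temporal Fourier transform of $G(\cdot,\xi)$: from the Laplace identity $\int_0^\infty e^{-pt}t^{\beta+\gamma-1}E_{\beta,\beta+\gamma}(-|\xi|^\alpha t^\beta)\,\ud t=p^{-\gamma}/(p^\beta+|\xi|^\alpha)$, continued to $p=i\tau$, I obtain $\widehat G(\tau,\xi)=(i\tau)^{-\gamma}/((i\tau)^\beta+|\xi|^\alpha)$. Substituting into the inner product, and using that $\Lambda_{H_0}$ has temporal spectral density $C_{H_0}|\tau|^{1-2H_0}$ (a constant when $H_0=\tfrac12$), yields the harmonizable representation of $U$ with kernel $(e^{i(t\tau+x\cdot\xi)}-1)\widehat G(\tau,\xi)$; hence the covariance has the form \eqref{Eq:R} with
\[
  F_U(\ud\tau,\ud\xi)=C_{H_0}\,|\tau|^{1-2H_0}\,|\widehat G(\tau,\xi)|^2\,\ud\tau\,\mu(\ud\xi).
\]
Since $|(i\tau)^\beta+|\xi|^\alpha|^2=|\tau|^{2\beta}+2|\xi|^\alpha|\tau|^\beta\cos(\tfrac{\pi\beta}{2})+|\xi|^{2\alpha}$ and $|(i\tau)^{-\gamma}|^2=|\tau|^{-2\gamma}$, this is exactly \eqref{e:spectral-measure-U}. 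Finally I would verify \eqref{Eq:F}: near $\tau=0$ this needs $1-2\gamma-2H_0>-1$, i.e.\ $\gamma<1-H_0$, while the behaviour at infinity uses $\theta_1,\theta_2<1$; the converse part of Yaglom's theorem then gives that $U$ has stationary increments with spectral measure $F_U$.

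\emph{Part (3): two-sided increment bounds.} By the representation above, $\E|U(t,x)-U(s,y)|^2=\int_{\R^{1+d}}|e^{i(t\tau+x\cdot\xi)}-e^{i(s\tau+y\cdot\xi)}|^2\,F_U(\ud\tau,\ud\xi)$. Because $\beta\in(0,2)$ forces $\cos(\tfrac{\pi\beta}{2})\in(-1,1)$, the denominator in \eqref{e:spectral-measure-U} is comparable to $|\tau|^{2\beta}+|\xi|^{2\alpha}$, so the spectral density satisfies $f_U(\tau,\xi)\asymp|\tau|^{1-2\gamma-2H_0}\prod_{j=1}^d|\xi_j|^{1-2H_j}\,(|\tau|^{2\beta}+|\xi|^{2\alpha})^{-1}$. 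Specializing to a pure temporal increment ($x=y$) and to a pure spatial increment ($t=s$), I would integrate out the complementary variable and then rescale ($\tau\mapsto|t-s|^{-1}\tau$, resp.\ $\xi\mapsto|x-y|^{-1}\xi$) to read off the exponents; the resulting anisotropic scaling exponents are precisely $\theta_1$ in time and $\theta_2=(\alpha/\beta)\theta_1$ in space, with the constants finite exactly because $\gamma<1-H_0$ and $\theta_1,\theta_2\in(0,1)$. The upper bound in \eqref{e:bounds-U} then follows by subadditivity, and the matching lower bound by restricting the frequency integral to a region on which $|e^{i(t\tau+x\cdot\xi)}-e^{i(s\tau+y\cdot\xi)}|^2$ is bounded below.

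\emph{Part (2): strong local nondeterminism (the main obstacle).} By the harmonizable representation, $\Var(U(t,x)\mid U(t^1,x^1),\dots,U(t^n,x^n))$ equals the squared $L^2(F_U)$-distance from $h(\lambda):=e^{i(t\tau+x\cdot\xi)}-1$ to the span of $h_k(\lambda):=e^{i(t^k\tau+x^k\cdot\xi)}-1$, $k=0,1,\dots,n$, with $(t^0,x^0)=0$. Writing $\rho((t,x),(s,y)):=|t-s|^{\theta_1}+\sum_{i=1}^d|x_i-y_i|^{\theta_2}$ and $r:=\min_{0\le k\le n}\rho((t,x),(t^k,x^k))$, I would follow the Fourier-analytic method of Pitt and Xiao: construct a density $m$ on frequency space whose physical-space Fourier transform $\widehat m$ is a bump supported in the $\rho$-ball of radius $r/2$ about $(t,x)$ and normalized by $\widehat m(t,x)=1$. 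Then $\widehat m$ vanishes at every conditioning point and at the origin, so for any coefficients the pairing $\int g\,\overline m\,\ud\lambda$ (with $g=h-\sum_k a_k h_k$) collapses to $\overline{\widehat m(t,x)}=1$, and Cauchy--Schwarz in $L^2(f_U\,\ud\lambda)$ yields $\Var(\cdots)\ge\big(\int|m|^2/f_U\,\ud\lambda\big)^{-1}$. It remains to build $m$ by an anisotropic dilation at scale $r$ (time by $r^{1/\theta_1}$, each spatial coordinate by $r^{1/\theta_2}$) and to show $\int|m|^2/f_U\,\ud\lambda\lesssim r^{-2}$. This last estimate is the crux: $1/f_U$ carries the singular weight $\prod_{j}|\xi_j|^{2H_j-1}$ on the coordinate hyperplanes $\{\xi_j=0\}$ and grows like $|\tau|^{2\beta}+|\xi|^{2\alpha}$ at infinity, so one must control both the behaviour of $m$ near $\xi_j=0$ and its frequency tails against this weight, making essential use of $\theta_2=(\alpha/\beta)\theta_1$ and $H=\sum_j H_j$. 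An alternative, once the comparability $f_U\asymp|\tau|^{1-2\gamma-2H_0}\prod_j|\xi_j|^{1-2H_j}(|\tau|^{2\beta}+|\xi|^{2\alpha})^{-1}$ is established, is to quote the general SLND criterion for Gaussian fields with stationary increments from the cited literature.
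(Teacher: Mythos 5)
Your overall architecture coincides with the paper's: part (1) by combining the spatial Fourier transform with the temporal spectral representation of $\Lambda_{H_0}$ and the Fourier transform of the Mittag--Leffler kernel, part (2) from the anisotropic scaling of the spectral density, and part (3) by rescaling the pure temporal and pure spatial increments and then combining. Two points, however, need attention, and the first is a genuine gap.

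The step you dispose of with ``continued to $p=i\tau$'' is precisely where the paper does real work (its Lemma \ref{le:Fourier-MLF}). For $\gamma\in(0,1)$ the integrand $t^{\beta+\gamma-1}E_{\beta,\beta+\gamma}(-|\xi|^\alpha t^\beta)$ decays only at the rate $t^{\gamma-1}$ as $t\to\infty$, so the Fourier integral on the imaginary axis is merely conditionally convergent: neither dominated convergence nor the identity theorem (which needs an open domain, not the boundary of the half-plane) lets you pass from $\Re p=\sigma>0$ to $\sigma=0$. The paper proves that both sides of the Laplace identity are continuous as $\sigma\downarrow 0$: the right-hand side because $(i\tau)^{\beta+\gamma}+|\xi|^\alpha(i\tau)^{\gamma}$ has no zeros when $\beta\in(0,2)$, $\tau\neq 0$, and the left-hand side by an alternating-series argument that exploits the eventual sign and monotone decay of the Mittag--Leffler asymptotics \eqref{e:asym-MLF}, treating the cases $\gamma>0$ and $\gamma=0$ separately. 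Your proposal must either reproduce this argument or cite a result that contains it; as written, the identity on which everything downstream rests is asserted, not proved.

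For part (2), your primary route (the Pitt--Xiao bump-function duality) leaves exactly the decisive estimate $\int|m|^2/f_U\,\ud\lambda\lesssim r^{-2}$ unproven, and that estimate is where the hyperplane singularities $\prod_j|\xi_j|^{2H_j-1}$ and the anisotropy must actually be handled; acknowledging it as ``the crux'' does not discharge it. The paper avoids this entirely: it verifies the exact homogeneity $f_U(c^{1/\theta_1}\tau,c^{1/\theta_2}\xi)=c^{-(2+Q)}f_U(\tau,\xi)$ with $Q=\frac{1}{\theta_1}+\frac{d}{\theta_2}$ (all three terms of the denominator in \eqref{e:spectral-measure-U} scale alike precisely because $\theta_2=\frac{\alpha}{\beta}\theta_1$) and then quotes Theorem 4.1 in the appendix of \cite{Herrell2020sharp}. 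In other words, your ``alternative'' is the paper's actual proof and should be promoted to the main route. Once \eqref{e:sLND} is available, the lower bound in \eqref{e:bounds-U} follows immediately by conditioning on the single point $(s,y)$, so your separate frequency-restriction argument for the lower bound in part (3) — which is workable but requires a case analysis to rule out cancellation between the temporal and spatial phases — becomes unnecessary.
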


\begin{proof}
  {\em Step 1:} Let $(t,x),(s,y)\in\R\times\R^d$. By \eqref{e:covariance-noise} and \eqref{e:de-U}, we have
  \begin{align*}
    &\E\left[ |U(t,x)-U(s,y)|^2 \right] \\
    &=\E\Bigg[ \bigg|\int_{\R}\int_{\R^d} [p\left((t-r)_+,x-z\right)-p\left((s-r)_+,y-z\right)] W(\ud r,\ud z)\bigg|^2 \Bigg]\\
    &=  \int_{\R^2}   |r_1-r_2|^{2H_0-2} \ud r_1\ud r_2  \int_{\R^d} \cF\left[  p\left((t-r_1)_+,x-\cdot\right)-p\left((s-r_1)_+,y-\cdot\right)\right](\xi)\\
    &\quad \times \overline{ \cF\left[  p\left((t-r_2)_+,x-\cdot\right)-p\left((s-r_2)_+,y-\cdot\right)\right](\xi)} \mu(\ud\xi) ,
  \end{align*}
  where by \eqref{eq Fourier p},
  \begin{equation*}
    \cF p\left((t-r)_+,x-\cdot\right)(\xi)=e^{-i\langle x,\xi\rangle}(t-r)^{\beta+\gamma-1}E_{\beta,\beta+\gamma}\big( - (t-r)^\beta|\xi|^\alpha \big)\one_{\{t-r>0\}}.
  \end{equation*}
Denote
  \begin{align*}
     \phi(r,\xi) &:= \cF\left[  p\left((t-r)_+,x-\cdot\right)-p\left((s-r)_+,y-\cdot\right)\right](\xi) \\
     & =e^{-i\langle x,\xi\rangle}(t-r)^{\beta+\gamma-1}E_{\beta,\beta+\gamma}\big( - (t-r)^\beta|\xi|^\alpha \big)\one_{\{t-r>0\}}\\
     &\quad -e^{-i\langle y,\xi\rangle}(s-r)^{\beta+\gamma-1}E_{\beta,\beta+\gamma}\big( - (s-r)^\beta|\xi|^\alpha \big)\one_{\{s-r>0\}}.
  \end{align*}
  Using the Parseval–Plancherel identity, we have
  \begin{align*}
    &\E\left[ |U(t,x)-U(s,y)|^2 \right] \\
    &= \int_{\R^2}|r_1-r_2|^{2H_0-2}\ud r_1\ud r_2 \int_{\R^d} \phi(r_1,\xi) \overline{\phi(r_2,\xi)}\mu(\ud\xi)\\
    &=C_{H_0}\int_{\R^d} \mu(\ud\xi) \int_\R \left| \hat{\phi}(\tau,\xi) \right|^2 |\tau|^{1-2H_0}\ud \tau,
  \end{align*}
with
  \begin{align*}
       \hat{\phi}(\tau,\xi) & :=\int_\R e^{-ir\tau}\phi(r,\xi)\ud r\\
     &= e^{-i\langle x,\xi\rangle}\int_{-\infty}^{t} e^{-ir\tau}(t-r)^{\beta+\gamma-1}E_{\beta,\beta+\gamma}\big( - (t-r)^\beta|\xi|^\alpha \big)\ud r\\
     &\quad-e^{-i\langle y,\xi\rangle}\int_{-\infty}^{s} e^{-ir\tau}(s-r)^{\beta+\gamma-1}E_{\beta,\beta+\gamma}\big( - (s-r)^\beta|\xi|^\alpha \big)\ud r\\
     &= e^{-i(\langle x,\xi\rangle+\tau t)}\int_{0}^{\infty} e^{iv\tau} v^{\beta+\gamma-1}E_{\beta,\beta+\gamma}\big( - v^\beta|\xi|^\alpha \big)\ud v\\
     &\quad-e^{-i(\langle y,\xi\rangle+\tau s)}\int_{0}^{\infty} e^{iv\tau} v^{\beta+\gamma-1}E_{\beta,\beta+\gamma}\big( - v^\beta|\xi|^\alpha \big)\ud v\\
     &= \left(e^{-i(\langle x,\xi\rangle+\tau t)}-e^{-i(\langle y,\xi\rangle+\tau s)}\right) \frac{(-i\tau)^{-\gamma}}{(-i\tau)^\beta+|\xi|^\alpha},
  \end{align*}
  where we have used Lemma \ref{le:Fourier-MLF} in the last step. Then
  \begin{align}\label{e:E-utx-usy}
    &\E\left[ |U(t,x)-U(s,y)|^2 \right] \notag\\
    &=C_{H_0} \int_{\R^{d+1}}\left| e^{-i(\langle x,\xi\rangle+\tau t)}-e^{-i(\langle y,\xi\rangle+\tau s)}\right|^2 \left|\frac{(-i\tau)^{-\gamma}}
    {(-i\tau)^\beta+|\xi|^\alpha}\right|^2|\tau|^{1-2H_0}\mu(\ud\xi)\ud \tau \notag\\
    &= C_{H_0} \int_{\R^{d+1}}\left| e^{-i(\langle x,\xi\rangle+\tau t)}-e^{-i(\langle y,\xi\rangle+\tau s)}\right|^2 \frac{|\tau|^{-2\gamma}} 
    {|\tau|^{2\beta}+2|\xi|^\alpha|\tau|^\beta\mathfrak{R}[(-i)^\beta] +|\xi|^{2\alpha}} |\tau|^{1-2H_0} \mu(\ud\xi) \ud \tau \notag\\
    &=2C_{H_0} \int_{\R^{d+1}}\big[ 1-\cos\big( \langle x-y,\xi\rangle+(t-s)\tau \big) \big] \frac{|\tau|^{1-2\gamma-2H_0}}
    {|\tau|^{2\beta}+2|\xi|^\alpha|\tau|^\beta\cos\left(\frac{\pi\beta}{2}\right)+|\xi|^{2\alpha}} \mu(\ud\xi) \ud \tau,
  \end{align}
  which only depends on $t-s$ and $x-y$.  Therefore, $U(t,x)$ has stationary increments with the spectral measure given by \eqref{e:spectral-measure-U}.

  {\em Step 2:}  Next, we prove the SLND property. By \eqref{e:spectral-measure-U}, we have the spectral density
  \begin{equation*}
    f_U(\tau,\xi)= \frac{2C_{H_0}|\tau|^{1-2\gamma-2H_0}\prod_{j=1}^{d}(c_j|\xi_j|^{1-2H_j})} {|\tau|^{2\beta}
    +2|\xi|^\alpha|\tau|^\beta\cos\left(\frac{\pi\beta}{2}\right)+|\xi|^{2\alpha}}.
  \end{equation*}
  Then for every $c>0$, we have 
  \begin{align*}
    &f_U(c^{1/\theta_1}\tau,c^{1/\theta_2}\xi)\\
     = &\, c^{\frac{1-2\gamma-2H_0}{\theta_1}+\frac{d-2H}{\theta_2}} \frac{2C_{H_0}|\tau|^{1-2\gamma-2H_0}\prod_{j=1}^{d}(c_j|\xi_j|^{1-2H_j})} 
     {c^{\frac{2\beta}{\theta_1}}|\tau|^{2\beta}+c^{\frac{\beta}{\theta_1}+\frac{\alpha}{\theta_2}}2|\xi|^\alpha|\tau|^\beta\cos\left(\frac{\pi\beta}{2}\right) 
     +c^\frac{2\alpha}{\theta_2}|\xi|^{2\alpha}}\\
     =&\, c^{\frac{1-2\gamma-2H_0}{\theta_1}+\frac{d-2H}{\theta_2}-\frac{2\beta}{\theta_1}}f_U(\tau,\xi)\\
    =&\, c^{\frac{2-2\beta-2\gamma-2H_0}{\theta_1}+\frac{2d-2H}{\theta_2}-\frac{1}{\theta_1}-\frac{d}{\theta_2}}f_U(\tau,\xi)
    =c^{-(2+Q)}f_U(\tau,\xi),
  \end{align*}
  where $Q=\frac{1}{\theta_1}+\frac{d}{\theta_2}$ with $\theta_1$ and $\theta_2$ being given in \eqref{e:theta12}.   The SLND property follows
  directly from Theorem 4.1 in \cite[Appendix A]{Herrell2020sharp}.

  {\em Step 3:} By \eqref{e:E-utx-usy}, using the change of variables $\xi =|\tau|^{\frac\beta\alpha} \tilde{\xi}$ and $\tilde\tau=(t-s)\tau$,
 and then changing the notations $\tilde{\xi}$ and  $\tilde\tau$ back to $\xi$ and  $\tau$, we have
  \begin{align}\label{e:proof-spectral-tilde-U}
 &\E\left[ |U(t,x)-U(s,x)|^2 \right]\notag\\
&=2C_{H_0}\int_{\R^{d+1}}\big[ 1-\cos\big( (t-s)\tau \big) \big] \frac{|\tau|^{1-2\gamma-2H_0}\prod_{j=1}^{d}c_{H_j}|\xi_j|^{1-2H_j}}{|\tau|^{2\beta}+2|\xi|^\alpha|\tau|^\beta\cos\left(\frac{\pi\beta}{2}\right)+|\xi|^{2\alpha}} \ud\xi \ud \tau\notag\\
   &= 2C_{H_0}\int_{\R^{d+1}} \frac{ 1-\cos\big( (t-s)\tau \big) }{|\tau|^{2\beta+2\gamma+2H_0-1-\frac\beta\alpha(2d-2H)}} \frac{\prod_{j=1}^{d}c_{H_j} |\xi_j|^{1-2H_j}}{1+2|\xi|^\alpha\cos\left(\frac{\pi\beta}{2}\right)+|\xi|^{2\alpha}}\ud\xi\ud \tau\\
   &= 2C_{H_0}|t-s|^{2\theta_1}  \int_{\R}\frac{1-\cos\big(\tau \big)}{|\tau|^{2\theta_1+1}}\ud \tau\times \int_{\R^d}\frac{\prod_{j=1}^{d}c_{H_j}|\xi_j|^{1-2H_j}}{1+2|\xi|^\alpha\cos\left(\frac{\pi\beta}{2}\right)+|\xi|^{2\alpha}}\ud\xi, \notag
  \end{align}
  where the first integral is finite owing to $\theta_1\in(0,1)$ and the second integral
  \begin{equation}\label{e:time-rgular-2-integral}
    \begin{split}
        &\int_{\R^d}\frac{\prod_{j=1}^{d}c_{H_j}|\xi_j|^{1-2H_j}}{1+2|\xi|^\alpha\cos\left(\frac{\pi\beta}{2}\right)+|\xi|^{2\alpha}}\ud\xi\\
     &\le \int_{\R^d}\frac{\prod_{j=1}^{d}c_{H_j}|\xi_j|^{1-2H_j}}{\left(1+|\xi|^{2\alpha}\right)\wedge
     \left(\left[1+\cos\left(\frac{\pi\beta}{2}\right)\right]\times\left(1+|\xi|^{2\alpha}\right) \right)}\ud\xi
     \\
     &\le \frac{1}{1\wedge\left[1+\cos\left(\frac{\pi\beta}{2}\right)\right]}\times\int_{\R^d}\frac{\prod_{j=1}^{d}c_{H_j}|\xi_j|^{1-2H_j}}{1+|\xi|^{2\alpha}}\ud\xi,
    \end{split}
  \end{equation}
   which is also finite  noting that $\alpha>d-H$ under \eqref{e:DL} by Lemma \ref{le:space-integral} (i).  Then,
    \begin{equation}\label{e:up-bound-t}
      \E\left[ |U(t,x)-U(s,x)|^2 \right] = C_1 |t-s|^{2\theta_1},
    \end{equation}
 where
 \begin{equation*}
 C_1=2C_{H_0}\times\int_{\R}\frac{1-\cos\big(\tau \big)}{|\tau|^{2\theta_1+1}}\ud \tau\times \int_{\R^d}\frac{\prod_{j=1}^{d}c_{H_j}|\xi_j|^{1-2H_j}}
 {1+2|\xi|^\alpha\cos\left(\frac{\pi\beta}{2}\right)+|\xi|^{2\alpha}}\ud\xi.
    \end{equation*}

By  \eqref{e:E-utx-usy} and using the change of variables ${\tau}=|\xi|^\frac{\alpha}{\beta}\tilde\tau$, we get
  \begin{align*}
 &\E\left[ |U(t,x)-U(t,y)|^2 \right]\\
 &=2C_{H_0}\int_{\R^{d+1}}\big[ 1-\cos\big( \langle x-y,\xi\rangle \big) \big] \frac{|\tau|^{1-2\gamma-2H_0}\prod_{j=1}^{d}c_{H_j}|\xi_j|^{1-2H_j}}
    {|\tau|^{2\beta}+2|\xi|^\alpha|\tau|^\beta\cos\left(\frac{\pi\beta}{2}\right)+|\xi|^{2\alpha}} \ud\xi \ud \tau\\
 &=2C_{H_0}\int_{\R^{d+1}}\big[ 1-\cos\big( \langle x-y,\xi\rangle \big) \big] \frac{\prod_{j=1}^{d}c_{H_j}|\xi_j|^{1-2H_j}}
 {|\xi|^{2\alpha+\frac\alpha\beta(2\gamma+2H_0-2)}} \frac{|\tau|^{1-2\gamma-2H_0}}{|\tau|^{2\beta}+2|\tau|^\beta\cos\left(\frac{\pi\beta}{2}\right)+1} \ud\xi \ud \tau \\
    &=2C_{H_0} \int_{\R^d} \frac{\big[ 1-\cos\big( \langle x-y,\xi\rangle \big) \big]\prod_{j=1}^{d}c_{H_j}|\xi_j|^{1-2H_j}}{|\xi|^{2\alpha+\frac\alpha\beta(2\gamma+2H_0-2)}} \ud\xi
     \int_{\R} \frac{|\tau|^{{1-2\gamma-2H_0}}}{|\tau|^{2\beta}+2|\tau|^\beta\cos\left(\frac{\pi\beta}{2}\right)+1}\ud \tau.
  \end{align*}
  The last condition in \eqref{e:U-exist} implies $1-2\gamma-2H_0>-1$ and \eqref{e:DL} implies $2\beta+2H_0+2\gamma-1>1$. Thus,
  the last integral in the above equation is finite, namely,
  \begin{equation}\label{c31}
   \begin{split}
     c_{3,1} &:=\int_{\R} \frac{|\tau|^{{1-2\gamma-2H_0}}}{|\tau|^{2\beta}+2|\tau|^\beta\cos\left(\frac{\pi\beta}{2}\right)+1}\ud \tau \\
  &\le\frac{1}{1\wedge\left[1+\cos\left(\frac{\pi\beta}{2}\right)\right]}\int_{\R} \frac{|\tau|^{{1-2\gamma-2H_0}}}{|\tau|^{2\beta}+1}\ud \tau<\infty.
   \end{split}
  \end{equation}
  Then, using changes of variables $\xi_j=rw_j$ in \eqref{e:changevariable} and $w_{x-y}=\frac{x-y}{|x-y|}$, we have
  \begin{align}\label{e:proof-spectral-hat-U}
    &\E\left[ |U(t,x)-U(t,y)|^2 \right]\notag\\
    &=2C_{H_0} c_{3,1}\int_{\R^d} \frac{\big[ 1-\cos\big( \langle x-y,\xi\rangle \big) \big]\prod_{j=1}^{d}c_{H_j}|\xi_j|^{1-2H_j}}
    {|\xi|^{2\alpha+\frac\alpha\beta(2\gamma+2H_0-2)}} \ud\xi\\
    &= 2C_{H_0} c_{3,1}\int_{\mathbb{S}^{d-1}} \prod_{j=1}^{d}c_{H_j}|w_j|^{1-2H_j} \sigma(\ud w) \int_0^\infty \frac{ 1-\cos\big( r |x-y|\langle w_{x-y}, w\rangle\big)} {r^{2\alpha-2d+2H+1+\frac\alpha\beta(2\gamma+2H_0-2)}} \ud r\notag\\
    &= 2C_{H_0} c_{3,1}\int_{\mathbb{S}^{d-1}} \prod_{j=1}^{d}c_{H_j}|w_j|^{1-2H_j} \sigma(\ud w) \int_0^\infty \frac{ 1-\cos\big( r|x-y|\langle w_{x-y}, w\rangle \big)}{r^{2\theta_2+1}} \ud r\notag\\
    &= 2C_{H_0} c_{3,1}|x-y|^{2\theta_2}\int_{\mathbb{S}^{d-1}} \prod_{j=1}^{d}c_{H_j}|w_j|^{1-2H_j} \sigma(\ud w) \int_0^\infty \frac{ 1-\cos\big( r \langle w_{x-y}, w\rangle\big)}{r^{2\theta_2+1}} \ud r.\notag
  \end{align}
 Note that  $\langle w_{x-y}, w\rangle\in[-d,d]$
  and
  \begin{align*}
    &\int_{\mathbb{S}^{d-1}} \prod_{j=1}^{d}c_{H_j}|w_j|^{1-2H_j} \sigma(\ud w) \int_0^\infty \frac{ 1-\cos\big( r \langle w_{x-y}, w\rangle\big)}{r^{2\theta_2+1}} \ud r\\
    &\le \int_{\mathbb{S}^{d-1}} \prod_{j=1}^{d}c_{H_j}|w_j|^{1-2H_j} \sigma(\ud w) \left(\int_{0}^{1/d}\frac{ 1-\cos\big( r d \big)}{r^{2\theta_2+1}}\ud r+ \int_{1/d}^{\infty}\frac{2}{r^{2\theta_2+1}}\ud r \right)=:c_{3,2},
  \end{align*}
  which is finite due to $\theta_2\in(0,1)$ and Lemma \ref{le:space-integral}. Note that $c_{3,2}$ does not depend on $x-y$.  Thus,
  \begin{equation}\label{e:up-bound-x}
    \E\left[ |U(t,x)-U(t,y)|^2 \right]\le C |x-y|^{2\theta_2}.
  \end{equation}

  Combining \eqref{e:up-bound-t} with \eqref{e:up-bound-x}, we have
  \begin{equation*}
    \E\left[ |U(t,x)-U(s,y)|^2 \right]\le C\left(|t-s|^{2\theta_1}+|x-y|^{2\theta_2}\right),
  \end{equation*}
  and $U(t,x)$ is well defined since $U(0,0)=0$. The lower bound in \eqref{e:bounds-U} is a direct consequence of the
  SLND property \eqref{e:sLND}. The proof is complete.
\end{proof}

\begin{remark}[On the condition for the parameter $\gamma$]
To obtain the existence and uniqueness of the solution to \eqref{e:fde},  Dalang's condition \eqref{e:DL}  in
Theorem \ref{th:ex-solution} does not require an upper bound for the parameter $\gamma$, which is natural since the
Riemann-Liouville integral $I_{0_+}^\gamma$  makes the noise $\dot W$ more regular with bigger $\gamma$. On the
other hand, we need to impose an upper bound for $\gamma$ in the study of the regularity of the solution, due to the following 
technical reasons:
\begin{itemize}
\item[(1)]
Our calculation relies on the Mittag-Leffler Fourier transform \eqref{e:fourier-MLF}, which holds only when $\gamma<1$.

 \item[(2)] Instead of dealing with $u(t,x)$ directly, we study the regularity of the increment-stationary field $
 \{U(t,x), (t, x) \in \R\times \R^d\}$ given in \eqref{e:de-U}.
To ensure that $U(t,x)$ is well-defined, we need the three conditions in \eqref{e:U-exist} so that the integrals in
\eqref{e:proof-spectral-tilde-U},  \eqref{c31}, and \eqref{e:proof-spectral-hat-U} are convergent.
 \end{itemize}
\end{remark}

Now, we are ready to study the regularity of $U(t,x)$.

\begin{proposition}\label{prop:modulus-both-U}
   Assume the conditions of Lemma \ref{le:sLND-U} hold.  Let  $I=[-M,M]^{d+1}$ with $0<M<\infty$ and let   $\rho(t,x)=|t|^{\theta_1}+|x|^{\theta_2}$.
  \begin{enumerate}
    \item (Uniform modulus of continuity). There exists a constant $k_1\in(0,\infty)$ such that
    \begin{equation*}
      \lim_{\e\to0^+}\sup_{\substack{(t,x),(s,y)\in I;\\ \rho(t-s,x-y)\le\e}}\dfrac{|U(t,x)-U(s,y)|}{\rho(t-s,x-y)\sqrt{\log\left(1+\rho(t-s,x-y)^{-1}\right)}}=k_1,
      \text{ a.s. }
    \end{equation*}
    \item (Local modulus of continuity). There exists a constant $k_2\in(0,\infty)$ such that for any $(t,x)\in I$,     \begin{equation*}
      \lim_{\e\to0^+}\sup_{\rho(s,y)\le \e} \dfrac{|U(t+s,x+y)-U(t,x)|}{\rho(s,y)\sqrt{\log\log\left(1+\rho(s,y)^{-1}\right)}}=k_2,
      \text{ a.s. }
    \end{equation*}
   \item (Chung's LIL). There exists a constant $k_3\in(0,\infty)$ such that for all $(t,x)\in I$,
  \begin{equation*}
    \liminf_{\e\to0^+}\sup_{\rho(s,y)\le \e} \dfrac{|U(t+s,x+y)-U(t,x)|}{\e\left(\log\log 1/\e\right)^{-1/Q}}=k_3, \text{ a.s. }
  \end{equation*}
  where $Q=\frac{1}{\theta_1}+\frac{d}{\theta_2}$.
  \end{enumerate}
\end{proposition}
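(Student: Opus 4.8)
The plan is to deduce all three statements from the general theory of anisotropic Gaussian random fields with stationary increments satisfying strong local nondeterminism, for which the three conclusions of Lemma \ref{le:sLND-U} supply exactly the required hypotheses: $U$ has stationary increments with the explicit spectral density, it satisfies the SLND property \eqref{e:sLND}, and its canonical metric $d_U((t,x),(s,y)):=(\E|U(t,x)-U(s,y)|^2)^{1/2}$ is comparable to $\rho(t-s,x-y)$ by the two-sided bounds \eqref{e:bounds-U}. This is the same setting treated in \cite{Herrell2020sharp,Tudor2017Sample,Lee2023Chung-type}, so the work reduces to assembling matching upper and lower bounds and then promoting the resulting $\limsup$/$\liminf$ to an almost sure constant. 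Throughout, the key geometric fact is that a $\rho$-ball of radius $\e$ has Lebesgue measure $\asymp \e^{1/\theta_1}\cdot\e^{d/\theta_2}=\e^{Q}$ with $Q=\frac1{\theta_1}+\frac d{\theta_2}$, the exact scaling exponent that already appeared in Step 2 of the proof of Lemma \ref{le:sLND-U}.

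For the uniform and local moduli (parts (1) and (2)) I would handle the upper and lower bounds separately. For the upper bound, since $U$ is Gaussian and $d_U\asymp\rho$, the $d_U$-metric entropy of the compact box $I$ is governed by a covering number $N(\e)\asymp \e^{-Q}$; a Fernique-type chaining estimate combined with the Gaussian concentration inequality then yields the oscillation bound with the correct normalization $\rho\sqrt{\log(1+\rho^{-1})}$ for the global version and $\rho\sqrt{\log\log(1+\rho^{-1})}$ for the version centered at a fixed point. For the lower bound I would invoke the SLND property \eqref{e:sLND}: it makes the increments of $U$ over a family of $\asymp\e^{-Q}$ disjoint $\rho$-balls of radius $\e$ uniformly ``almost independent,'' so that Gaussian lower tail estimates together with the Borel--Cantelli lemma force the oscillation to exceed a constant multiple of the conjectured modulus infinitely often. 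Finally, the existence of a single almost sure value $k_1$ (respectively $k_2$) follows from a zero--one law, the limiting functional being almost surely constant.

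For Chung's LIL (part (3)) the essential new ingredient is a two-sided small-ball estimate of the form
\[
-\log \P\!\left(\sup_{\rho(s,y)\le \e}|U(t+s,x+y)-U(t,x)|\le \e\,\delta\right)\asymp \delta^{-Q}
\]
for small $\delta>0$, which I would derive from the SLND property and the anisotropic self-similarity of the variogram: SLND lets one decouple the field over a grid of mesh $\e$ inside the $\rho$-ball, producing $\asymp\e^{-Q}\cdot\e^{Q}$ effective degrees of freedom and hence the exponent $Q$. Choosing $\delta_\e=(\log\log 1/\e)^{-1/Q}$ makes the right-hand side $\asymp\log\log 1/\e$, so that along a geometric sequence $\e_n=a^{-n}$ the probabilities behave like $n^{-c}$; tuning the constant in the normalization crosses from Borel--Cantelli summability to divergence (using SLND again for the independence needed in the divergence half), which pins down the two matching bounds and the normalization $\e(\log\log 1/\e)^{-1/Q}$. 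As in the previous parts, a zero--one law identifies the limit with a finite positive constant $k_3$.

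I expect the small-ball estimate in part (3) to be the main obstacle. The lower bound on the probability (equivalently, the upper bound on $-\log\P(\cdots)$) is comparatively routine given $d_U\asymp\rho$, but the matching upper bound on the probability genuinely requires SLND to decouple increments over a fine anisotropic grid, and one must track the two distinct scaling rates $\theta_1$ in time and $\theta_2$ in space carefully so that the resulting exponent is exactly $Q=\frac1{\theta_1}+\frac d{\theta_2}$ and not a larger power. Once this estimate is in hand with the correct exponent, the Borel--Cantelli arguments in both directions and the zero--one law are standard.
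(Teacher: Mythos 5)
Your proposal is correct and takes essentially the same route as the paper: the paper also treats the three conclusions of Lemma \ref{le:sLND-U} (stationary increments with explicit spectral density, the SLND property \eqref{e:sLND}, and the two-sided bounds \eqref{e:bounds-U}) as exactly the hypotheses of the general theory of anisotropic Gaussian random fields, and then concludes by citing \cite[Theorem 4.1, Theorem 5.6]{Xiao2013Fernique} for the uniform and local moduli of continuity and \cite[Theorem 1.1]{Xiao2010Chung} for Chung's LIL, which are precisely the results whose proofs (entropy/chaining upper bounds, SLND-based lower bounds and small-ball estimates with exponent $Q$, Borel--Cantelli, zero--one law) you sketch. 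The only minor imprecision is that in those cited proofs the quasi-independence needed for the divergence half of Chung's LIL comes from decomposing the spectral representation into independent frequency bands rather than from SLND itself, but this does not affect the reduction.
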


\begin{proof}
  Thanks to Lemma \ref{le:sLND-U}, by using \cite[Theorem 4.1, Theorem 5.6]{Xiao2013Fernique} for modulus of continuity
  and \cite[Theorem 1.1]{Xiao2010Chung} for Chung's LIL,  we can derive the regularity results for $U(t,x)$.
\end{proof}

Now, we study the regularity of $V(t,x)$. The following proposition shows that the sample path of $V(t,x)$ is smoother 
than $U(t,x)$ in $(0,\infty)\times\R^d$.

\begin{proposition}\label{prop:reg-V}
Assume the conditions of Lemma \ref{le:sLND-U} hold. Then, $\{V(t,x),t\ge0,x\in\R^d\}$ is well-defined.
\begin{enumerate}
  \item[(1)] There exists a finite constant $c > 0$  such that for all $x\in\R^d$ and $0< s < t<\infty$, 
  \begin{equation}\label{e:d1-pre}
    \E\left[|V(t,x)-V(s,x)|^2\right]^{1/2}\le
    \begin{cases}
      \dfrac{c|t-s|}{s^{1-\theta_1}}, & \mbox{if } \beta+\gamma-2\le0, \\
      \dfrac{c\: t^{\beta+\gamma-2}|t-s|}{s^{-H_0+\frac{\beta}{\alpha}(d-H)}}, & \mbox{if }\beta+\gamma-2>0 .
    \end{cases}
  \end{equation}
   In particular, for any $0<a<b<\infty$, there exists a finite constant $C > 0$ such that for all $x\in\R^d$ and 
   $s,t\in[a,b]$, 
  \begin{equation}\label{e:upb-vtx-vsx}
  \E\left[ \left|V(t,x)-V(s,x)\right|^2 \right] \le C |t-s|^2.
  \end{equation}
   Consequently,  for any $x\in\R^d$, the  function $t\mapsto V(t,x)$ is smoother than $t\mapsto U(t,x)$ on $[a,b]$.

  \item[(2)]
    There exists a finite constant $C > 0$ such that for all $t>0$ and $x,y\in\R^d$ with $|x-y|\le e^{-1}$,
  \begin{equation}\label{e:upb-vtxvty}
    \begin{split}
       &\E\left[ \left|V(t,x)-V(t,y)\right|^2 \right] \\
         &\qquad\le
    \begin{cases}
      C\:t^{2H_0+2\gamma-2}\times|x-y|^{2\alpha-2d+2H}, & \mbox{if } 0<\alpha-d+H<1, \\
       C\:t^{2H_0+2\gamma-2}\times|x-y|^2 \log\left(|x-y|^{-1}\right), & \mbox{if } \alpha-d+H=1, \\
      C\:t^{2\beta+2\gamma+2H_0-2-\frac{\beta(2d-2H+2)}{\alpha}}\times|x-y|^2, & \mbox{if } \alpha-d+H>1.
    \end{cases}
    \end{split}
  \end{equation}
Consequently, for any $t>0$,  the  function $x\mapsto V(t,x)$ is smoother than $x\mapsto U(t,x)$ on $\R^d$.
\end{enumerate}
\end{proposition}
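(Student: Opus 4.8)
The plan is to estimate the two increments of $V$ directly from its definition \eqref{e:de-V}, exploiting that the common term $p(-r,-y)$ cancels in each difference (well-definedness of $V$ is then immediate from $V=U-u$ together with Lemma \ref{le:sLND-U} and Theorem \ref{th:ex-solution}). For the temporal increment in part (1) the cancellation gives
\[
V(t,x)-V(s,x)=\int_{-\infty}^{0}\int_{\R^d}\big[p(t-r,x-y)-p(s-r,x-y)\big]\,W(\ud r,\ud y).
\]
Writing $G(\tau,\xi):=\cF p(\tau,\cdot)(\xi)=\tau^{\beta+\gamma-1}E_{\beta,\beta+\gamma}(-\tau^\beta|\xi|^\alpha)$ from \eqref{eq Fourier p} and using \eqref{e:innerproduct-cH}, I would write $\E[|V(t,x)-V(s,x)|^2]$ as the $\mu$-integral over $\xi$ of a double time integral against $|r_1-r_2|^{2H_0-2}$ of $[G(t-r_1,\xi)-G(s-r_1,\xi)][G(t-r_2,\xi)-G(s-r_2,\xi)]$. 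The crucial step is to extract the factor $t-s$ via the fundamental theorem of calculus: since $G(t-r,\xi)-G(s-r,\xi)=\int_s^t\partial_\tau G(\tau-r,\xi)\,\ud\tau$ and \eqref{e:differential-E} with $n=1$ gives $\partial_\tau G(\tau,\xi)=\tau^{\beta+\gamma-2}E_{\beta,\beta+\gamma-1}(-\tau^\beta|\xi|^\alpha)$, Minkowski's inequality yields
\[
\E\big[|V(t,x)-V(s,x)|^2\big]^{1/2}\le\int_s^t\big\|\,\partial_\tau G(\tau-\cdot,\cdot)\,\one_{\{\cdot<0\}}\big\|_{\cH}\,\ud\tau.
\]

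For each fixed $\tau$ I would bound the inner norm exactly as in the proof of Theorem \ref{th:ex-solution}: apply Lemma \ref{le:B.2} to dominate the time-correlation double integral by $C_{H_0}\int_{\R^d}\big(\int_\tau^\infty|\partial_\tau G(v,\xi)|^{1/H_0}\ud v\big)^{2H_0}\mu(\ud\xi)$ (after the change of variable $v=\tau-r$), and then insert the Mittag--Leffler bound \eqref{e:bound-MLF} in the form $|\partial_\tau G(v,\xi)|\le\hat c\,v^{\beta+\gamma-2}/(1+|\xi|^\alpha v^\beta)$. The scaling substitutions $v\mapsto|\xi|^{-\alpha/\beta}v$ in time and $\xi_j=rw_j$ in space (as in \eqref{e:changevariable}) reduce the estimate to a radial integral times an angular factor, finite under \eqref{e:DL}, \eqref{e:U-exist} and Lemma \ref{le:space-integral}. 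Both regimes of \eqref{e:d1-pre} carry the same total homogeneity $\theta_1-1$ in the pair $(s,t)$; the sign of $\beta+\gamma-2$ controls the monotonicity of $v\mapsto v^{\beta+\gamma-2}$ and hence whether the dominant contribution sits at the lower endpoint $v=\tau\ge s$ (the case $\beta+\gamma-2\le0$, producing $s^{\theta_1-1}$) or is pushed to larger $v$ (the case $\beta+\gamma-2>0$, producing the split $t^{\beta+\gamma-2}s^{H_0-\frac{\beta}{\alpha}(d-H)}$, whose exponents again sum to $\theta_1-1$). Integrating in $\tau$ over $[s,t]$ then gives \eqref{e:d1-pre}; restricting to $s,t\in[a,b]$ bounds the $s$- and $t$-powers and yields the Lipschitz estimate \eqref{e:upb-vtx-vsx}, and since $\theta_1<1$ this exponent $1$ strictly exceeds the temporal exponent $\theta_1$ of $U$ in \eqref{e:bounds-U}.

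For the spatial increment in part (2) the same cancellation gives $V(t,x)-V(t,y)=\int_{-\infty}^0\int_{\R^d}[p(t-r,x-z)-p(t-r,y-z)]\,W(\ud r,\ud z)$, which now produces the factor $|e^{-i\langle x,\xi\rangle}-e^{-i\langle y,\xi\rangle}|^2=2(1-\cos\langle x-y,\xi\rangle)$ in the $\xi$-integrand rather than a time difference. Here there is no differentiation in $\tau$: I would bound the time-correlation double integral of $G(t-r_1,\xi)G(t-r_2,\xi)$ directly by Lemma \ref{le:B.2} and \eqref{e:bound-MLF}, evaluating $\int_t^\infty(v^{\beta+\gamma-1}/(1+|\xi|^\alpha v^\beta))^{1/H_0}\ud v$ to obtain a bound of the form $t^{\#}|\xi|^{-\kappa}$. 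Combined with $2(1-\cos\langle x-y,\xi\rangle)$, the spatial integral $\int_{\R^d}(1-\cos\langle x-y,\xi\rangle)\,|\xi|^{-\kappa}\prod_j|\xi_j|^{1-2H_j}\,\ud\xi$ is the classical one whose order is dictated by the position of $\kappa$ relative to $2d-2H$: after passing to polar coordinates and truncating the radial integral at scale $|x-y|^{-1}$, it behaves like $|x-y|^{2(\alpha-d+H)}$ when $0<\alpha-d+H<1$, like $|x-y|^2\log|x-y|^{-1}$ at the borderline $\alpha-d+H=1$, and like $|x-y|^2$ when $\alpha-d+H>1$; tracking the accompanying power of $t$ gives exactly the three lines of \eqref{e:upb-vtxvty}. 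Since $\gamma+H_0-1<0$ under \eqref{e:U-exist}, the spatial exponent $\min(\alpha-d+H,1)$ strictly exceeds $\theta_2=\alpha-d+H+\frac{\alpha}{\beta}(\gamma+H_0-1)$, so $x\mapsto V(t,x)$ is smoother than $x\mapsto U(t,x)$.

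The part I expect to be most delicate is the exact bookkeeping of the powers of $s$, $t$ and $|\xi|$ through the fractional exponent $1/H_0$ created by the temporal long-range dependence: one must check that the scaling substitutions reproduce precisely the exponents $1-\theta_1$ and $-H_0+\frac{\beta}{\alpha}(d-H)$ appearing in \eqref{e:d1-pre}, and that the lower limit $v\ge\tau$ (rather than $v\ge0$) is what separates the two temporal regimes and supplies the correct $s$-dependence. The spatial analysis is more routine, but the borderline case $\alpha-d+H=1$ requires the logarithmic refinement and a careful split of the radial integral at the scale $|x-y|^{-1}$.
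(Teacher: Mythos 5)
Part (1) of your proposal is correct, and it takes a slightly different route from the paper's. Where the paper controls the increment of the Fourier transform by the mean value theorem, bounding $\sup_{\delta\in[s,t]}\left|\partial_\delta\left[(\delta-r)^{\beta+\gamma-1}E_{\beta,\beta+\gamma}\left(-(\delta-r)^\beta|\xi|^\alpha\right)\right]\right|\cdot|t-s|$ by the value at the endpoint $\delta=s$ or $\delta=t$ according to the sign of $\beta+\gamma-2$ (this is exactly where the two regimes of \eqref{e:d1-pre} come from), you write the increment as $\int_s^t\partial_\tau G(\tau-r,\xi)\,\ud\tau$ and apply Minkowski's inequality in $\cH$. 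Your route works: by \eqref{e:differential-E}, \eqref{e:bound-MLF}, Lemma \ref{le:B.2} and the scaling $\xi\mapsto v^{-\beta/\alpha}\xi$, one gets $\big\|\partial_\tau p(\tau-\cdot,x-\cdot)\one_{\{\cdot<0\}}\big\|_{\cH}\le C\tau^{\theta_1-1}$ (finite because $\theta_1<1$ and $\alpha>d-H$), whence $\E\left[|V(t,x)-V(s,x)|^2\right]^{1/2}\le C\int_s^t\tau^{\theta_1-1}\ud\tau\le C|t-s|\,s^{\theta_1-1}$ for all $0<s<t$. This single bound implies both lines of \eqref{e:d1-pre}, since for $\beta+\gamma-2>0$ one has $s^{\theta_1-1}\le t^{\beta+\gamma-2}s^{H_0-\frac{\beta}{\alpha}(d-H)}$; so your closing speculation that the two regimes should emerge from the lower limit $v\ge\tau$ is moot --- your method never produces the split, it produces something stronger, and \eqref{e:upb-vtx-vsx} follows on $[a,b]$.

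Part (2), however, has a genuine gap in the cases $\alpha-d+H\ge1$. Your plan is to evaluate the time integral so as to replace the Mittag--Leffler factor by a pure power, $t^{2H_0+2\gamma-2}|\xi|^{-2\alpha}$ --- this amounts to using only the decay part $E_{\beta,\beta+\gamma}\left(-v^\beta|\xi|^\alpha\right)\le\hat c\, v^{-\beta}|\xi|^{-\alpha}$ of \eqref{e:bound-MLF} --- and then to invoke the classical asymptotics of $\int_{\R^d}\left(1-\cos\langle x-y,\xi\rangle\right)|\xi|^{-2\alpha}\prod_{j}|\xi_j|^{1-2H_j}\ud\xi$. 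That integral converges only when $2d-2H<2\alpha<2d-2H+2$, i.e.\ only in the first case $0<\alpha-d+H<1$: near $\xi=0$ the integrand is of order $|x-y|^2|\xi|^{2-2\alpha}\prod_j|\xi_j|^{1-2H_j}$, radially $|x-y|^2\rho^{\,2d-2H+1-2\alpha}$, which is not integrable at $\rho=0$ once $\alpha-d+H\ge1$. Truncating the radial integral at $\rho=|x-y|^{-1}$ only controls the large-$\xi$ region and does nothing for this infrared divergence, so the claimed outputs $|x-y|^2\log(|x-y|^{-1})$ and $|x-y|^2$ cannot be extracted from your kernel. To obtain the second and third lines of \eqref{e:upb-vtxvty} one must keep the regularized bound $E^2\le\hat c^2/\left(1+(t-r)^\beta|\xi|^\alpha\right)^2$ near $\xi=0$ (equivalently, split $\xi$-space at $|\xi|\sim(t-r)^{-\beta/\alpha}$); and in the case $\alpha-d+H>1$ the paper uses a genuinely different mechanism: it first bounds $\left|e^{-i\langle x,\xi\rangle}-e^{-i\langle y,\xi\rangle}\right|\le|\xi|\,|x-y|$, then rescales and invokes $\int_{\R^d}E_{\beta,\beta+\gamma}^2\left(-|\xi|^\alpha\right)|\xi|^2\mu(\ud\xi)<\infty$ from Lemma \ref{le:space-integral}(ii) (which needs $\alpha>d-H+1$). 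That mechanism is also what produces the different $t$-exponent $2\beta+2\gamma+2H_0-2-\beta(2d-2H+2)/\alpha$ in the third line of \eqref{e:upb-vtxvty}; your computation would give $t^{2H_0+2\gamma-2}$ in all three cases, a further indication that a single pure-power kernel cannot yield \eqref{e:upb-vtxvty} as stated.
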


\begin{proof}
By \eqref{e:split-u}, we have that for $(t,x)\in[0,\infty)\times\R^d$,
 \begin{equation*}
   \E[|V(t,x)|^2]=\E[|U(t,x)-u(t,x)|^2]\le 2\E[|U(t,x)|^2]+2\E[|u(t,x)|^2]<\infty,
 \end{equation*}
 under conditions \eqref{e:DL} and \eqref{e:U-exist}, due to Theorem \ref{th:ex-solution} and Lemma \ref{le:sLND-U}. Then, 
 $\{V(t,x),t\ge0,x\in\R^d\}$ is well-defined.

 (1).  We prove the regularity of $t\mapsto V(t,x)$ on $(0, T]$. Using \eqref{e:innerproduct-cH}, \eqref{e:de-V}, Lemma \ref{le:B.2} and  
 Minkowski's inequality, we have
    \begin{align}\label{e:vtx-vsy0}
    &\E\left[ \left|V(t,x)-V(s,x)\right|^2 \right] \notag\\
    &=\int_{-\infty}^0 \int_{-\infty}^0 |r_1-r_2|^{2H_0-2}\ud r_1\ud r_2 \int_{\R^d} \cF\left[ p\left(t-r_1,x-\cdot\right)-p\left(s-r_1,x-\cdot\right)\right](\xi) \notag\\
    &\quad \times \overline{ \cF\left[  p\left(t-r_2,x-\cdot\right)- p\left(s-r_2,x-\cdot\right)\right](\xi) } \mu(\ud\xi). \notag\\
    &\le C_{H_0} \bigg( \int_{-\infty}^{0}\bigg(\int_{\R^d} \bigg|\cF\left[ p\left(t-r,x-\cdot\right)- p\left(s-r,x-\cdot\right)\right](\xi)\bigg|^2 \mu(\ud\xi)\bigg)^\frac{1}{2H_0}\ud r\bigg)^{2H_0},
  \end{align}
   where
  \begin{equation}\label{e:fourier-r-v}
   \cF \left[p\left(t-r,x-\cdot\right)\right](\xi)=e^{-i\langle x,\xi\rangle}(t-r)^{\beta+\gamma-1}E_{\beta,\beta+\gamma}\big( -(t-r)^\beta|\xi|^\alpha \big).
  \end{equation}

  For any $x\in\R^d$ and $0< s< t<\infty$, by \eqref{eq Fourier p}, \eqref{e:differential-E} and \eqref{e:bound-MLF}, we have
  \begin{align}\label{e:cFptxpsx}
    &\bigg|\cF\left[  p\left(t-r,x-\cdot\right)-p\left(s-r,x-\cdot\right)\right](\xi)\bigg| \notag\\
    &= \bigg|(t-r)^{\beta+\gamma-1}E_{\beta,\beta+\gamma}\big( -(t-r)^\beta|\xi|^\alpha \big)- (s-r)^{\beta+\gamma-1}E_{\beta,\beta+\gamma}\big(-(s-r)^\beta|\xi|^\alpha \big)\bigg| \notag\\
    &\le \sup_{\delta\in[s,t]} \left|\frac{\partial }{\partial \delta}\left[(\delta-r)^{\beta+\gamma-1}E_{\beta,\beta+\gamma}\big( -(\delta-r)^\beta|\xi|^\alpha \big) \right]\right| \times|t-s| \notag\\
    &= \sup_{\delta\in[s,t]}\left|(\delta-r)^{\beta+\gamma-2}E_{\beta,\beta+\gamma-1}\big( -(\delta-r)^\beta|\xi|^\alpha \big)\right| \times |t-s| \notag\\
    &\le [1\vee(t/s)^{\beta+\gamma-2}]\hat{c} \left| \frac{(s-r)^{\beta+\gamma-2}}{1+(s-r)^\beta|\xi|^\alpha} \right|  \times |t-s|,
  \end{align}
  where $\hat{c}$ is given in \eqref{e:bound-MLF}. Plugging \eqref{e:cFptxpsx} into \eqref{e:vtx-vsy0}, we can see that
  \begin{align*}
     & \E\left[ \left|V(t,x)-V(s,x)\right|^2 \right] \\
     &\le [1\vee(t/s)^{2\beta+2\gamma-4}]  \\
     &\quad \times C_{H_0}\hat{c}^2\bigg( \int_{-\infty}^{0}\bigg(\int_{\R^d} \left| \frac{(s-r)^{\beta+\gamma-2}}{1+(s-r)^\beta|\xi|^\alpha} \right| ^2  \prod_{j=1}^{d}c_{H_j}|\xi_j|^{1-2H_j} \ud\xi\bigg)^\frac{1}{2H_0}\ud r\bigg)^{2H_0}\times |t-s|^2\\
     &\le [1\vee(t/s)^{2\beta+2\gamma-4}]\\
     &\quad\times C_{H_0}\hat{c}^2\bigg(\int_{-\infty}^{0}(s-r)^{\frac{1}{2H_0}\left[2\beta+2\gamma-4-\frac{\beta(2d-2H)}{\alpha}\right]}\ud r\bigg)^{2H_0} \times \int_{\R^d}  \frac{\prod_{j=1}^{d}c_{H_j}|\xi_j|^{1-2H_j}}{1+|\xi|^{2\alpha}} \ud\xi \times|t-s|^2,
  \end{align*}
  where in the second inequality we applied change of variables. Noting that
  $2H_0+2\beta+2\gamma-4-\frac{\beta(2d-2H)}{\alpha}<0$ due to the first condition in \eqref{e:U-exist}, we have
  \begin{equation*}
    \\ \bigg(\int_{-\infty}^{0}(s-r)^{\frac{1}{2H_0}\left[2\beta+2\gamma-4-\frac{\beta(2d-2H)}{\alpha}\right]}\ud r\bigg)^{2H_0}= C s^{2\beta+2\gamma+2H_0-4-\frac{\beta(2d-2H)}{\alpha}}.
  \end{equation*}
  By Lemma \ref{le:space-integral} and \eqref{e:DL}, we have
  \begin{equation}\label{e:de-c1}
    c_1:=\int_{\R^d} \frac{\prod_{j=1}^{d}c_{H_j} |\xi_j|^{1-2H_j}}{1+|\xi|^{2\alpha}} \ud\xi<\infty.
  \end{equation}
  This proves \eqref{e:d1-pre} and then \eqref{e:upb-vtx-vsx} follows.

(2). Next, we prove the regularity of $V(t,x)$ in space. For any $t\in[a,b]$, we have
  \begin{align}\label{e:vtx-vsy-space 1}
    &\E\left[ \left|V(t,x)-V(t,y)\right|^2 \right] \notag\\
    &=\int_{-\infty}^0 \int_{-\infty}^0 |r_1-r_2|^{2H_0-2}\ud r_1\ud r_2 \int_{\R^d} \cF\left[  p\left(t-r_1,x-\cdot\right)- p\left(t-r_1,y-\cdot\right)\right](\xi) \notag\\
    &\quad \times \overline{ \cF\left[  p\left(t-r_2,x-\cdot\right)- p\left(t-r_2,y-\cdot\right)\right](\xi) } \mu(\ud\xi). \notag\\
    &\le C_{H_0} \bigg( \int_{-\infty}^{0}\bigg(\int_{\R^d} \bigg|\cF\left[ p\left(t-r,x-\cdot\right)-  p\left(t-r,y-\cdot\right)\right](\xi)\bigg|^2 \mu(\ud\xi)\bigg)^\frac{1}{2H_0}\ud r\bigg)^{2H_0}.
  \end{align}
 By \eqref{e:fourier-r-v}, we have
  \begin{align*}
     & \left|\cF\left[ p\left(t-r,x-\cdot\right)-  p\left(t-r,y-\cdot\right)\right](\xi) \right|^2 \notag \\
     &= (t-r)^{2\beta+2\gamma-2}E_{\beta,\beta+\gamma}^2 \big( -(t-r)^\beta|\xi|^\alpha \big) \times \left|e^{-i\langle x,\xi\rangle}-e^{-i\langle y,\xi\rangle}\right|^2.
  \end{align*}

  Note that \eqref{e:DL} implies that $\alpha-d+H>0$ which is split into three cases:
  \begin{itemize}
  \item[(i)] $0<\alpha-d+H<1$;
  \item[(ii)] $\alpha-d+H=1$;
  \item[(iii)] $\alpha-d+H>1$.
\end{itemize}

  (i) $0<\alpha-d+H<1$: Since 
  \begin{align}\label{eq Cos}
  \left|e^{-i\langle x,\xi\rangle}-e^{-i\langle y,\xi\rangle}\right|^2=2[1-\cos(\langle x-y,\xi\rangle)],
  \end{align} 
  by using \eqref{e:bound-MLF} and the  change  of variables $\xi_j=rw_j$ in \eqref{e:changevariable}, $w_{x-y}=\frac{x-y}{|x-y|}$, we can bound \eqref{e:vtx-vsy-space 1} by
  \begin{align*}
     &\E\left[ \left|V(t,x)-V(t,y)\right|^2 \right]\\
     &\le 2C_{H_0} \bigg(\int_{-\infty}^{0}\bigg(\int_{\R^d} [1-\cos(\langle x-y,\xi\rangle)] (t-r)^{2\beta+2\gamma-2}E_{\beta,\beta+\gamma}^2 \big( -(t-r)^\beta|\xi|^\alpha \big) \mu(\ud\xi)\bigg)^\frac{1}{2H_0} \ud r\bigg)^{2H_0}\\
     &\le 2C_{H_0}\hat{c}^2 \bigg(\int_{-\infty}^{0}\bigg((t-r)^{2\gamma-2}\int_{\R^d}  \frac{[1-\cos(\langle x-y,\xi\rangle)]}{|\xi|^{2\alpha}} \mu(\ud\xi)\bigg)^\frac{1}{2H_0} \ud r\bigg)^{2H_0}\\
     &\le 2C_{H_0}\hat{c}^2\bigg(\int_{-\infty}^{0}(t-r)^{\frac{2\gamma-2}{2H_0}} \ud r\bigg)^{2H_0}\times \int_{\R^d}  \frac{1-\cos(\langle x-y,\xi\rangle)}{|\xi|^{2\alpha}} \mu(\ud\xi)\\
     &=2C_{H_0}\hat{c}^2\times t^{2H_0+2\gamma-2}\times \int_{0}^{\infty}\frac{1-\cos(r|x-y|\langle w_{x-y},w\rangle)}{r^{2\alpha-2d+2H+1}} \ud r \times \int_{\mathbb{S}^{d-1}}\prod_{j=1}^{d}c_{H_j}w_j^{1-2H_j} \sigma(\ud w)\\
     &=2C_{H_0}\hat{c}^2\times t^{2H_0+2\gamma-2}\times \int_{0}^{\infty}\frac{1-\cos(r\langle w_{x-y},w\rangle)}{r^{2\alpha-2d+2H+1}} \ud r \times \int_{\mathbb{S}^{d-1}}\prod_{j=1}^{d}c_{H_j}w_j^{1-2H_j} \sigma(\ud w)\\
     &\quad \times |x-y|^{2\alpha-2d+2H}\\
     & \le C\:t^{2H_0+2\gamma-2}\times|x-y|^{2\alpha-2d+2H},
  \end{align*}
   where  we have used the fact $2H_0+2\gamma-2<0$ due to \eqref{e:U-exist} and the facts  that $C$ is a finite constant independent of  $x-y$ and  
  \begin{align*}
     & \int_{0}^{\infty}\frac{1-\cos(r\langle w_{x-y},w\rangle)}{r^{2\alpha-2d+2H+1}} \ud r \le \int_{0}^{1/d}\frac{1-\cos(rd)}{r^{2\alpha-2d+2H+1}} \ud r+ \int_{1/d}^{\infty}\frac{1}{r^{2\alpha-2d+2H+1}} \ud r<\infty,
  \end{align*}
  because of    $0<\alpha-d+H<1$.
  
  (ii) $\alpha-d+H=1$: By \eqref{eq Cos} and  \eqref{e:bound-MLF}, we can bound \eqref{e:vtx-vsy-space 1} by
  \begin{align*}
     &\E\left[ \left|V(t,x)-V(t,y)\right|^2 \right]\\
     &\le 2C_{H_0} \bigg(\int_{-\infty}^{0}\bigg(\int_{\R^d} [1-\cos(\langle x-y,\xi\rangle)] (t-r)^{2\beta+2\gamma-2}E_{\beta,\beta+\gamma}^2 \big( -(t-r)^\beta|\xi|^\alpha \big) \mu(\ud\xi)\bigg)^\frac{1}{2H_0} \ud r\bigg)^{2H_0}\\
     &\le 2C_{H_0}\hat{c}^2 \bigg(\int_{-\infty}^{0}\bigg((t-r)^{2\gamma-2}\int_{\R^d}  \frac{[1-\cos(\langle x-y,\xi\rangle)]}{1+|\xi|^{2\alpha}} \mu(\ud\xi)\bigg)^\frac{1}{2H_0} \ud r\bigg)^{2H_0}\\
     &\le 2C_{H_0}\hat{c}^2\bigg(\int_{-\infty}^{0}(t-r)^{\frac{2\gamma-2}{2H_0}} \ud r\bigg)^{2H_0}\times \int_{\R^d}  \frac{1-\cos(\langle x-y,\xi\rangle)}{1+|\xi|^{2\alpha}} \mu(\ud\xi)\\
     &=2C_{H_0}\hat{c}^2\times t^{2H_0+2\gamma-2}\times \int_{0}^{\infty}\frac{1-\cos(r|x-y|\langle w_{x-y},w\rangle)}{1+r^{2\alpha-2d+2H+1}} \ud r \times \int_{\mathbb{S}^{d-1}}\prod_{j=1}^{d}c_{H_j}w_j^{1-2H_j} \sigma(\ud w),
  \end{align*}
  where
  \begin{align*}
     & \int_{0}^{\infty}\frac{1-\cos(r|x-y|\langle w_{x-y},w\rangle)}{1+r^{2\alpha-2d+2H+1}} \ud r=\int_{0}^{\infty}\frac{1-\cos(r|x-y|\langle w_{x-y},w\rangle)}{1+r^{3}} \ud r\\
     &\le \int_{0}^{1}1-\cos(r|x-y|\langle w_{x-y},w\rangle)\ud r+\int_{1}^{\infty}\frac{1-\cos(r|x-y|\langle w_{x-y},w\rangle)}{r^{3}} \ud r\\
     &\le \frac{d^2}{2}\int_{0}^{1}r^2|x-y|^2 \ud r+|x-y|^2\left(\int_{|x-y|}^{1}\frac{1-\cos(r\langle w_{x-y},w\rangle)}{r^3} \ud r +\int_{1}^{\infty}\frac{1}{r^3}\ud r\right)\\
     &\le\frac{d^2}{6}|x-y|^2+|x-y|^2\left(\frac{d^2}{2}\int_{|x-y|}^{1}r^{-1} \ud r +\int_{1}^{\infty}\frac{1}{r^3}\ud r\right)\\
     &=\frac{d^2+3}{6}|x-y|^2+\frac{d^2}{2}|x-y|^2\log(|x-y|^{-1}).
  \end{align*}

  (iii) $\alpha-d+H>1$: since $\left|e^{-i\langle x,\xi\rangle}-e^{-i\langle y,\xi\rangle}\right|\le |\xi||x-y|$, we can bound \eqref{e:vtx-vsy-space 1} by
  \begin{align*}
     &\E\left[ \left|V(t,x)-V(t,y)\right|^2 \right]\\
     &\le C_{H_0}|x-y|^2 \bigg(\int_{-\infty}^{0}\bigg(\int_{\R^d} (t-r)^{2\beta+2\gamma-2}E_{\beta,\beta+\gamma}^2 \big( -(t-r)^\beta|\xi|^\alpha \big) |\xi|^2 \mu(\ud\xi)\bigg)^\frac{1}{2H_0} \ud r\bigg)^{2H_0}\\
     &= C_{H_0} |x-y|^2 \bigg(\int_{-\infty}^{0}(t-r)^{\frac{1}{2H_0}\left[2\beta+2\gamma-2-\frac{\beta(2d-2H+2)}{\alpha}\right]} \ud r\bigg)^{2H_0} \times \int_{\R^d}  E_{\beta,\beta+\gamma}^2 \big( - |\xi|^\alpha \big)|\xi|^2 \mu(\ud\xi).
  \end{align*}
  Using \eqref{e:bound-MLF}, we can bound the above term by
   \begin{align*}
   &\E\left[ \left|V(t,x)-V(t,y)\right|^2 \right]\\
     &\le C_{H_0}\hat{c}^2 |x-y|^2 \bigg(\int_{-\infty}^{0}(t-r)^{\frac{1}{2H_0}\left[2\beta+2\gamma-2-\frac{\beta(2d-2H+2)}{\alpha}\right]} \ud r\bigg)^{2H_0} \times \int_{\R^d} \frac{ \prod_{j=1}^{d}c_{H_j}|\xi_j|^{1-2H_j}}{1+|\xi|^{2\alpha}}|\xi|^2 \ud \xi \\
     &=C_{H_0}\hat{c}^2\: c_2\times t^{2\beta+2\gamma+2H_0-2-\frac{\beta(2d-2H+2)}{\alpha}}\times |x-y|^2,
  \end{align*}
  where the last step is from $\frac{1}{2H_0}\left[2\beta+2\gamma-2-\frac{\beta(2d-2H+2)}{\alpha}\right]<-1$ due to the condition $\theta_2<1$ in \eqref{e:U-exist} and
  \begin{equation*}
    c_2:=\int_{\R^d} \frac{ \prod_{j=1}^{d}c_{H_j}|\xi_j|^{1-2H_j}}{1+|\xi|^{2\alpha}}|\xi|^2 \ud \xi<\infty,
  \end{equation*}
  due to $\alpha-d+H>1$ by Lemma \ref{le:space-integral}.   This completes the proof of \eqref{e:upb-vtxvty}.
  \end{proof}

With an extra condition \eqref{e:con-space-add}, the regularity of $V(t,x)$ can be improved.
\begin{proposition}\label{prop:reg-V-add}
 Denote $I=[a,b]\times[-M,M]^d$ with $0<a<b<\infty$ and $0<M<\infty$. Under conditions of Lemma \ref{le:sLND-U} and
  \begin{equation}\label{e:con-space-add}
    \alpha-d+H>1,
  \end{equation}
  there is a  modification of $V(t,x)$ such that its sample function is almost surely continuously (partially) differentiable on $(t,x)\in I$.  In particular, the  function $(t,x)\mapsto V(t,x)$ is smoother than $(t,x)\mapsto U(t,x)$ on $I$.
\end{proposition}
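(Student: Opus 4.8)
The plan is to build the candidate partial derivative fields of $V$ directly from its Wiener--integral representation \eqref{e:de-V}, to show each of them is a well-defined Gaussian field admitting an almost surely continuous modification via Kolmogorov's continuity criterion, and finally to verify that these fields are the genuine partial derivatives of $V$. The extra hypothesis \eqref{e:con-space-add} enters precisely at the point where the spatial derivative field must have finite variance.

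\emph{Step 1 (candidate derivatives and their finiteness).} Differentiating the kernel in \eqref{e:de-V} formally under the Wiener integral, I would set, for $k=1,\dots,d$,
\begin{equation*}
V^{(0)}(t,x):=\int_{-\infty}^{0}\int_{\R^d}\partial_t p(t-r,x-y)\,W(\ud r,\ud y),\qquad V^{(k)}(t,x):=\int_{-\infty}^{0}\int_{\R^d}\partial_{x_k}p(t-r,x-y)\,W(\ud r,\ud y).
\end{equation*}
By \eqref{eq Fourier p} and the Mittag--Leffler identity \eqref{e:differential-E}, the Fourier transform (in space) of the time-derivative kernel is $e^{-i\langle x,\xi\rangle}(t-r)^{\beta+\gamma-2}E_{\beta,\beta+\gamma-1}\big(-(t-r)^\beta|\xi|^\alpha\big)$, while that of the $x_k$-derivative kernel is $-i\xi_k\,\cF p(t-r,x-\cdot)(\xi)$. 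Using \eqref{e:innerproduct-cH}, Lemma~\ref{le:B.2}, Minkowski's inequality and the Mittag--Leffler bound \eqref{e:bound-MLF} exactly as in the proof of Proposition~\ref{prop:reg-V}, the variance of $V^{(0)}$ is controlled by a time integral with the exponent coming from part~(1) of that proof together with the spatial integral $c_1<\infty$ in \eqref{e:de-c1}, hence $V^{(0)}$ is finite under \eqref{e:DL} and \eqref{e:U-exist} alone. For $V^{(k)}$, the extra factor $|\xi_k|^2$ produces the spatial integral $\int_{\R^d}\frac{|\xi|^2\prod_{j=1}^d c_{H_j}|\xi_j|^{1-2H_j}}{1+|\xi|^{2\alpha}}\ud\xi$, which is exactly the quantity $c_2<\infty$ from case~(iii) of Proposition~\ref{prop:reg-V} and, by Lemma~\ref{le:space-integral}, is finite \emph{if and only if} $\alpha-d+H>1$; this is where \eqref{e:con-space-add} is indispensable.

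\emph{Step 2 (continuity of the derivative fields).} Next I would establish H\"older-type bounds of the form $\E\big[|V^{(j)}(t,x)-V^{(j)}(s,y)|^2\big]\le C\big(|t-s|^{2\eta_1}+|x-y|^{2\eta_2}\big)$ for some $\eta_1,\eta_2>0$, uniformly for $(t,x),(s,y)\in I$, by repeating the kernel-difference estimates of Proposition~\ref{prop:reg-V} one order higher (i.e.\ differencing $\partial_t p$ and $\partial_{x_k}p$ via \eqref{e:differential-E} and the mean value theorem, then applying \eqref{e:bound-MLF} and Lemma~\ref{le:space-integral}). Since each $V^{(j)}$ is Gaussian, hypercontractivity upgrades these $L^2$ bounds to $L^m$ bounds for all $m$, and Kolmogorov's continuity theorem produces an almost surely continuous modification of every $V^{(j)}$ on $I$.

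\emph{Step 3 (identification as derivatives) and the main obstacle.} Finally I would show that $V^{(0)},V^{(1)},\dots,V^{(d)}$ are the classical partial derivatives of $V$. The cleanest route is a Fubini argument for Wiener integrals: for fixed $x$ one writes $V(t,x)-V(a,x)=\int_a^t V^{(0)}(\sigma,x)\,\ud\sigma$ almost surely (and the analogous identities in each $x_k$), after which the continuity of $V^{(0)}$ from Step~2 forces $\partial_t V=V^{(0)}$ in the classical sense, and similarly $\partial_{x_k}V=V^{(k)}$; joint continuity of all first-order partials then yields the desired $C^1$ modification and, in particular, shows $V$ is smoother than $U$ on $I$. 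The main obstacle is precisely this last step: rigorously justifying the interchange of differentiation and the Wiener integral and upgrading the $L^2$-convergence of the difference quotients to almost-sure classical differentiability with continuous derivatives. This requires uniform control, over $(t,x)\in I$, of the second-order kernel increments through \eqref{e:differential-E} and \eqref{e:bound-MLF}, and it is here that the restriction $a>0$ in $I=[a,b]\times[-M,M]^d$ is essential, since $\partial_t p(t-r,\cdot)$ and $\partial_{x_k}p(t-r,\cdot)$ degenerate as $t\downarrow0$.
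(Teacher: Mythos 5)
Your proposal takes essentially the same approach as the paper's proof: define the mean-square partial derivatives of $V$ as Wiener integrals of the formally differentiated kernel (with \eqref{e:con-space-add} entering exactly where you say, namely the finiteness of the spatial-derivative variance integral $c_2$ via Lemma \ref{le:space-integral}), establish Lipschitz-type $L^2$ increment bounds and apply Kolmogorov's continuity theorem, and then identify the continuous derivative fields as classical partial derivatives through a fundamental-theorem-of-calculus construction. The paper resolves the interchange issue you flag as the ``main obstacle'' by its Lemma \ref{le:partial-fourier} (justifying $\cF[\partial p] = \partial\,\cF p$) together with a Step-3 construction that integrates the derivative field coordinate by coordinate, which is the same Fubini-type identification you outline, just performed in the space variables first rather than in time.
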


\begin{proof}
 {\em Step 1:} We first prove the mean square partial derivative of $V(t,x)$ in time has  a continuous modification. Using the dominated convergence theorem, we obtain that the mean square derivative in time is
\begin{equation*}
  \frac{\partial}{\partial t} V(t,x)=\int_{-\infty}^{0}\int_{\R^d}\frac{\partial}{\partial t} p(t-r,x-z) W(\ud r,\ud z).
\end{equation*}
Notice that $\E\left|\frac{\partial}{\partial t}V(t,x)\right|^2<\infty$ under \eqref{e:DL} and \eqref{e:U-exist}. Thus,  $\frac{\partial}{\partial t}V(t,x)$ is well-defined. Using \eqref{e:innerproduct-cH}, \eqref{e:de-V},  Lemma \ref{le:B.2} and  Minkowski's inequality, we see that
    \begin{align}\label{e:vtx-vsy-2}
    &\E\left[ \left|\frac{\partial}{\partial t}V(t,x)-\frac{\partial}{\partial s}V(s,x)\right|^2 \right] \notag\\
    &= \int_{-\infty}^0 \int_{-\infty}^0 |r_1-r_2|^{2H_0-2}\ud r_1\ud r_2 \int_{\R^d} \cF\left[  \frac{\partial}{\partial t}p\left(t-r_1,x-\cdot\right)-\frac{\partial}{\partial s}p\left(s-r_1,x-\cdot\right)\right](\xi) \notag\\
    &\quad \times \overline{ \cF\left[ \frac{\partial}{\partial t} p\left(t-r_2,x-\cdot\right)- \frac{\partial}{\partial s}p\left(s-r_2,x-\cdot\right)\right](\xi) } \mu(\ud\xi). \notag\\
    &\le C_{H_0} \bigg( \int_{-\infty}^{0}\bigg(\int_{\R^d} \bigg|\cF\left[ \frac{\partial}{\partial t}p\left(t-r,x-\cdot\right)- \frac{\partial}{\partial s}p\left(s-r,x-\cdot\right)\right](\xi)\bigg|^2 \mu(\ud\xi)\bigg)^\frac{1}{2H_0}\ud r\bigg)^{2H_0}.
  \end{align}
   Using Lemma \ref{le:partial-fourier}, we have
  \begin{align*}
    \cF \left[\frac{\partial}{\partial t}p\left(t-r,x-\cdot\right)\right](\xi)=&\, \frac{\partial}{\partial t} \cF p(t-r,x-\cdot)(\xi)\\
   &= e^{-i\langle x,\xi\rangle}\frac{\partial}{\partial t} \left[(t-r)^{\beta+\gamma-1}E_{\beta,\beta+\gamma}\big( - (t-r)^\beta|\xi|^\alpha \big)\right]\\
   &=e^{-i\langle x,\xi\rangle}(t-r)^{\beta+\gamma-2}E_{\beta,\beta+\gamma-1}\big( - (t-r)^\beta|\xi|^\alpha \big),
  \end{align*}
  where the last step is due to \eqref{e:differential-E}.
By using \eqref{e:differential-E} again,   we have
  \begin{align}\label{e:phitx-sx}
    &\bigg|\cF\left[ \frac{\partial}{\partial t} p\left(t-r,x-\cdot\right)-\frac{\partial}{\partial s}p\left(s-r,x-\cdot\right)\right](\xi)\bigg| \notag\\
    &= \bigg|(t-r)^{\beta+\gamma-2}E_{\beta,\beta+\gamma-1}\big( -(t-r)^\beta|\xi|^\alpha \big)- (s-r)^{\beta+\gamma-2}E_{\beta,\beta+\gamma-1}\big( -(s-r)^\beta|\xi|^\alpha \big)\bigg| \notag\\
    &\le \sup_{\delta\in[a,b]} \left| \frac{\partial }{\partial \delta}\left[ (\delta-r)^{\beta+\gamma-2}E_{\beta,\beta+\gamma-1}\big( -(\delta-r)^\beta|\xi|^\alpha \big) \right]\right| \times|t-s| \notag\\
    &= \sup_{\delta\in[a,b]}\left|(\delta-r)^{\beta+\gamma-3}E_{\beta,\beta+\gamma-2}\big( -(\delta-r)^\beta|\xi|^\alpha \big)\right| \times |t-s| \notag\\
    &\le \left| \frac{C(a-r)^{\beta+\gamma-3}}{1+(a-r)^\beta|\xi|^\alpha} \right|  \times |t-s|.
  \end{align}
  Recall that $c_1$ is given in \eqref{e:de-c1}. Combining \eqref{e:vtx-vsy-2} with \eqref{e:phitx-sx}, we can get
  \begin{align*}
     & \E\left[ \left|\frac{\partial}{\partial t}V(t,x)-\frac{\partial}{\partial s}V(s,x)\right|^2 \right] \\
     &\le C_{H_0}\hat{c}^2 \bigg( \int_{-\infty}^{0}\bigg(\int_{\R^d} \left| \frac{(a-r)^{\beta+\gamma-3}}{1+(a-r)^\beta|\xi|^\alpha} \right| ^2  \prod_{j=1}^{d}c_{H_j}|\xi_j|^{1-2H_j} \ud\xi\bigg)^\frac{1}{2H_0}\ud r\bigg)^{2H_0} \times |t-s|^2\\
     &\le C_{H_0}\hat{c}^2 \bigg(\int_{-\infty}^{0}(a-r)^{\frac{1}{2H_0}\left[2\beta+2\gamma-6-\frac{\beta(2d-2H)}{\alpha}\right]}\ud r\bigg)^{2H_0}\times \int_{\R^d}  \frac{\prod_{j=1}^{d}c_{H_j}|\xi_j|^{1-2H_j}}{1+|\xi|^{2\alpha}} \ud\xi \times|t-s|^2\\
     &=C_{H_0}\:\hat{c}^2\times a^{2\beta+2\gamma+2H_0-6-\frac{\beta(2d-2H)}{\alpha}}\times c_1 |t-s|^2,
  \end{align*}
 where  $2\beta+2\gamma+2H_0-6-\frac{\beta(2d-2H)}{\alpha}<0$ due to \eqref{e:U-exist}, and $c_1<\infty$ due to Lemma \ref{le:space-integral} and \eqref{e:DL}.
  Therefore, for all $s,t\in[a,b]$ and $x\in\R^d$,
  \begin{align}\label{e:up-b-partial-t-v-t}
    &\E\left[ \left|\frac{\partial}{\partial t}V(t,x)-\frac{\partial}{\partial s}V(s,x)\right|^2 \right] \le C |t-s|^2.
  \end{align}
  Similarly, we can verify that for  $t\in[a,b]$ and $x,y\in\R^d$,
  \begin{align*}
    &\E\left[ \left|\frac{\partial}{\partial t}V(t,x)-\frac{\partial}{\partial t}V(t,y)\right|^2 \right]\\
    &\le C_{H_0} \bigg( \int_{-\infty}^{0}\bigg(\int_{\R^d} \bigg|\cF\left[ \frac{\partial}{\partial t}p\left(t-r,x-\cdot\right)- \frac{\partial}{\partial t}p\left(t-r,y-\cdot\right)\right](\xi)\bigg|^2 \mu(\ud\xi)\bigg)^\frac{1}{2H_0}\ud r\bigg)^{2H_0},
  \end{align*}
  where
  \begin{align*}
     &\bigg|\cF\left[ \frac{\partial}{\partial t}p\left(t-r,x-\cdot\right)- \frac{\partial}{\partial t}p\left(t-r,y-\cdot\right)\right](\xi)\bigg|^2\\
     &=\bigg|(t-r)^{\beta+\gamma-2}E_{\beta,\beta+\gamma-1}\big( -(t-r)^\beta|\xi|^\alpha \big)\bigg|^2 \times \left|e^{-i\langle x,\xi\rangle}-e^{-i\langle y,\xi\rangle} \right|^2\\
     &\le \bigg|(t-r)^{\beta+\gamma-2}E_{\beta,\beta+\gamma-1}\big( -(t-r)^\beta|\xi|^\alpha \big)\bigg|^2 \times |\xi|^2 |x-y|^2\\
     &\le \bigg|\frac{(t-r)^{\beta+\gamma-2}}{1+(t-r)^\beta|\xi|^\alpha} \bigg|^2 \times |\xi|^2 |x-y|^2.
  \end{align*}
 Then, we have
 \begin{align}\label{e:up-b-partial-t-v-x}
    &\E\left[ \left|\frac{\partial}{\partial t}V(t,x)-\frac{\partial}{\partial t}V(t,y)\right|^2 \right]\notag\\
    &\le  C_{H_0} \bigg( \int_{-\infty}^{0}\bigg(\int_{\R^d} \bigg|\frac{(t-r)^{\beta+\gamma-2}}{1+(t-r)^\beta|\xi|^\alpha} \bigg|^2 \times |\xi|^2 \mu(\ud\xi)\bigg)^\frac{1}{2H_0}\ud r\bigg)^{2H_0} \times|x-y|^2 \notag\\
    &\le C |x-y|^2,
 \end{align}
 where $C<\infty$ due to conditions \eqref{e:U-exist} and \eqref{e:con-space-add}.

 Therefore, combining \eqref{e:up-b-partial-t-v-t} with \eqref{e:up-b-partial-t-v-x}, we see that for any $(t,x),(s,y)\in[a,b]\times\R^d$,
 \begin{equation*}
   \E\left[ \left|\frac{\partial}{\partial t}V(t,x)-\frac{\partial}{\partial s}V(s,y)\right|^2 \right]\le C \left(|t-s|^2+ |x-y|^2\right).
 \end{equation*}
 Then by Kolmogorov’s continuity theorem, we can find a modification of $\frac{\partial}{\partial t}V(t,x)$ such that its sample function is continuous in $I$.

  {\em Step 2:} Next, we prove that the mean square partial derivatives of $V(t,x)$ in $x_i, i=1, \dots, d$,  have continuous modifications. For $i=1,\dots,d$,
  \begin{equation*}
    \frac{\partial}{\partial x_i} V(t,x)=\int_{-\infty}^{0}\int_{\R^d}\frac{\partial}{\partial x_i} p(t-r,x-z) W(\ud r,\ud z),
  \end{equation*}
  is   well-defined under \eqref{e:DL} and  \eqref{e:U-exist}. Then,
  \begin{align*}
    &\E\left[ \left|\frac{\partial}{\partial x_i}V(t,x)-\frac{\partial}{\partial y_i}V(t,y)\right|^2 \right] \notag\\
    &=  \int_{-\infty}^0 \int_{-\infty}^0 |r_1-r_2|^{2H_0-2}\ud r_1\ud r_2 \int_{\R^d} \cF\left[  \frac{\partial}{\partial x_i}p\left(t-r_1,x-\cdot\right)-\frac{\partial}{\partial y_i} p\left(t-r_1,y-\cdot\right)\right](\xi) \notag\\
    &\quad \times \overline{ \cF\left[ \frac{\partial}{\partial x_i} p\left(t-r_2,x-\cdot\right)- \frac{\partial}{\partial y_i}p\left(t-r_2,y-\cdot\right)\right](\xi) } \mu(\ud\xi) \notag\\
    &\le C_{H_0} \bigg( \int_{-\infty}^{0}\bigg(\int_{\R^d} \bigg|\cF\left[ \frac{\partial}{\partial x_i}p\left(t-r,x-\cdot\right)- \frac{\partial}{\partial y_i} p\left(t-r,y-\cdot\right)\right](\xi)\bigg|^2 \mu(\ud\xi)\bigg)^\frac{1}{2H_0}\ud r\bigg)^{2H_0}.
  \end{align*}
  Note that \eqref{e:con-space-add} implies $\alpha>1$. According to Lemma \ref{le:partial-fourier}, the Fourier transform is given by
  \begin{align*}
    \cF \left[\frac{\partial}{\partial x_i}p\left(t-r,x-\cdot\right)\right](\xi) &=\frac{\partial}{\partial x_i} \cF p\left(t-r,x-\cdot\right)(\xi)\\
   &=-i\xi_i e^{-i\langle x,\xi\rangle}(t-r)^{\beta+\gamma-1}E_{\beta,\beta+\gamma}\big( -(t-r)^\beta|\xi|^\alpha \big),
  \end{align*}
  and
  \begin{align*}
     & \left|\cF\left[ \frac{\partial}{\partial x_i}p\left(t-r,x-\cdot\right)- \frac{\partial}{\partial y_i} p\left(t-r,y-\cdot\right)\right](\xi) \right|^2 \notag \\
     &= \left|\xi_i(t-r)^{\beta+\gamma-1}E_{\beta,\beta+\gamma}\big( -(t-r)^\beta|\xi|^\alpha \big) \right|^2 \times \left|e^{-i\langle x,\xi\rangle}-e^{-i\langle y,\xi\rangle}\right|^2.
  \end{align*}
  Now, we separate the condition \eqref{e:con-space-add}  into three cases:
  \begin{itemize}
  \item[(i)] $1<\alpha-d+H<2$;
  \item[(ii)] $\alpha-d+H=2$;
  \item[(iii)] $\alpha-d+H>2$.
  \end{itemize}
   Let
  \begin{equation*}
    \sigma(r)=
    \begin{cases}
      r^{2\alpha-2d+2H-2}, & \mbox{if } 1<\alpha-d+H<2; \\
      r^2|\log r|, & \mbox{if } \alpha-d+H=2; \\
      r^2, & \mbox{if }\alpha-d+H>2.
    \end{cases}
  \end{equation*}
  Using a similar argument in the proof of Proposition \ref{prop:reg-V}, we can obtain that
  \begin{equation*}
    \E\left[ \left|\frac{\partial}{\partial x_i}V(t,x)-\frac{\partial}{\partial y_i}V(t,y)\right|^2 \right]\le \sigma(|x-y|),
  \end{equation*}
  and
  \begin{equation*}
    \E\left[ \left|\frac{\partial}{\partial x_i}V(t,x)-\frac{\partial}{\partial x_i}V(s,x)\right|^2 \right]\le C|t-s|^2.
  \end{equation*}
Consequently,
  \begin{equation*}
    \E\left[ \left|\frac{\partial}{\partial x_i}V(t,x)-\frac{\partial}{\partial y_i}V(s,y)\right|^2 \right]\le C \left(|t-s|^2+ \sigma|x-y|\right).
  \end{equation*}
  By the Gaussian property of $\frac{\partial}{\partial x_i}V(t,x)$ and  Kolmogorov’s continuity theorem, we can find a modification of $\frac{\partial}{\partial x_i}V(t,x)$ such that its sample function is continuous on $I$.

 {\em Step 3:} We will construct a modification of $V(t,x)$ such that its sample function has continuous partial derivatives almost surely. In Step 1, we derive that there is a continuous version $\tilde{V}$ of $V$ such that $\frac{\partial \tilde{V}}{\partial t}(t)$ is continuous. Then, we apply Step 2 to $x_1$ and obtain a version $V^{(1)}$ of $\tilde{V}$ defined by
 \begin{equation*}
   V^{(1)}(t,x)=  \tilde{V}(t,-M,x_2,\dots,x_d)+\int_{-M}^{x} \frac{\partial}{\partial y}  \tilde{V}(t,y,x_2,\dots,x_d)\ud y.
 \end{equation*}
  Then, $\frac{\partial }{\partial t}V^{(1)}(t,x)$ and $\frac{\partial }{\partial x_1}V^{(1)}(t,x)$  are almost surely continuous. Repeating this procedure for $x_i,i=2,\dots,d$, we get a continuous version $V^{(d)}(t,x)$ such that all first-order partial derivatives of $V^{(d)}(t,x)$ are continuous almost surely. Hence, the sample function of $V^{(d)}(t,x)$ is almost surely differentiable. The proof is complete.
\end{proof}

\begin{remark}
Indeed, the  condition \eqref{e:con-space-add} is not necessary for Proposition \ref{prop:reg-V-add} in case of the stochastic heat equation (see also \cite[Theorem 3.6]{Herrell2020sharp}), since the Fourier transform of the fundamental solution has exponential decay in this case (i.e., $\beta=1$ and $\gamma=0$):
  \begin{equation*}
    \cF p(t,\cdot)(\xi)=E_{1,1}\big(- t|\xi|^\alpha\big)=e^{- t|\xi|^\alpha}.
  \end{equation*}
 While for the other cases (i.e., $\beta\neq1$ or $\gamma\neq0$), the Fourier transform of the fundamental solution only has   polynomial decay by \eqref{e:asym-MLF}:
  \begin{equation*}
    E_{\beta,\beta+\gamma}(- t|\xi|^\alpha) = \dfrac{1}{\Gamma(\gamma)t|\xi|^\alpha}-\dfrac{1}{\Gamma(\gamma-\beta)t^2|\xi|^{2\alpha}}+o\left(\frac{1}{|\xi|^{2\alpha}}\right),\quad \text{as } \xi\to\infty.
  \end{equation*}
Thus, when $\beta\neq1$ or $\gamma\neq0$, the condition \eqref{e:con-space-add} is needed for Proposition \ref{prop:reg-V-add}.
\end{remark}

Now, we state the main result in this subsection.
\begin{theorem}[Space-time joint regularity]\label{th:modulus-both}
   Assume the conditions of Proposition \ref{prop:reg-V-add} hold. Denote $I=[a,b]\times[-M,M]^d$ with $0<a<b<\infty$ and $0<M<\infty$.  Denote $\rho(t,x)=|t|^{\theta_1}+|x|^{\theta_2}$.
  \begin{enumerate}
    \item (Uniform modulus of continuity). There exists a constant $k_1\in(0,\infty)$ such that
    \begin{equation*}
      \lim_{\e\to0^+}\sup_{\substack{(t,x),(s,y)\in I;\\ \rho(t-s,x-y)\le\e}}\dfrac{|u(t,x)-u(s,y)|}{\rho(t-s,x-y)\sqrt{\log\left(1+\rho(t-s,x-y)^{-1}\right)}}=k_1,
      \text{ a.s. }
    \end{equation*}
    \item (Local modulus of continuity). There exists a constant $k_2\in(0,\infty)$ such that for any $(t,x)\in I$,
    \begin{equation*}
      \lim_{\e\to0^+}\sup_{\rho(s,y)\le \e} \dfrac{|u(t+s,x+y)-u(t,x)|}{\rho(s,y)\sqrt{\log\log\left(1+\rho(s,y)^{-1}\right)}}=k_2,
      \text{ a.s. }
    \end{equation*}
   \item (Chung's LIL). There exists a constant $k_3\in(0,\infty)$ such that for all $(t,x)\in I$,
  \begin{equation*}
    \liminf_{\e\to0^+}\sup_{\rho(s,y)\le \e} \dfrac{|u(t+s,x+y)-u(t,x)|}{\e\left(\log\log 1/\e\right)^{-1/Q}}=k_3, \text{ a.s. }
  \end{equation*}
  where $Q=\frac{1}{\theta_1}+\frac{d}{\theta_2}$.
  \end{enumerate}
\end{theorem}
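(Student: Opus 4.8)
The plan is to transfer the three limit theorems, already established for the stationary-increment field $U$ in Proposition \ref{prop:modulus-both-U}, to the solution via the decomposition $u=U-V$ from \eqref{e:split-u}. The whole argument hinges on the fact that, under the hypotheses of Theorem \ref{th:modulus-both} (which include \eqref{e:con-space-add}), Proposition \ref{prop:reg-V-add} provides a modification of $V$ whose sample paths are almost surely continuously differentiable on the compact rectangle $I=[a,b]\times[-M,M]^d$. In particular $V$ is almost surely Lipschitz on $I$: there is a finite random constant $L$ with $|V(t,x)-V(s,y)|\le L(|t-s|+|x-y|)$ for all $(t,x),(s,y)\in I$.

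The key observation is that these order-one increments of $V$ are of strictly higher order than the anisotropic scale $\rho(t-s,x-y)=|t-s|^{\theta_1}+|x-y|^{\theta_2}$ governing $U$, because $\theta_1,\theta_2\in(0,1)$. Writing $a=|t-s|$, $b=|x-y|$ and $p=a^{\theta_1}$, $q=b^{\theta_2}$, monotonicity of $x\mapsto x^{1/\theta_i}$ gives $a+b=p^{1/\theta_1}+q^{1/\theta_2}\le (p+q)^{1/\theta_1}+(p+q)^{1/\theta_2}$, so
\[
\frac{|V(t,x)-V(s,y)|}{\rho(t-s,x-y)}\le L\,\frac{a+b}{p+q}\le L\big((p+q)^{1/\theta_1-1}+(p+q)^{1/\theta_2-1}\big)\longrightarrow 0
\]
uniformly as $\rho(t-s,x-y)=p+q\to0$, since both exponents are positive. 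Hence $|V(t,x)-V(s,y)|=o(\rho(t-s,x-y))$ uniformly on $I$.

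For the uniform and local moduli, the normalizer is $\rho(t-s,x-y)\sqrt{\log(\cdots)}$ with the logarithmic factor bounded below by a positive constant near the diagonal, so the display above shows the $V$-contribution to each normalized ratio vanishes uniformly as $\e\to0$. The reverse triangle inequality $\big||u(t,x)-u(s,y)|-|U(t,x)-U(s,y)|\big|\le|V(t,x)-V(s,y)|$ then forces the supremum of the normalized ratio for $u$ to differ from that for $U$ by a vanishing amount, and letting $\e\to0$ recovers $k_1$ and $k_2$ from Proposition \ref{prop:modulus-both-U}(1)--(2). For Chung's LIL, the constraint $\rho(s,y)\le\e$ forces $|s|\le\e^{1/\theta_1}$ and $|y|\le\e^{1/\theta_2}$, whence $\sup_{\rho(s,y)\le\e}|V(t+s,x+y)-V(t,x)|\le L(\e^{1/\theta_1}+\e^{1/\theta_2})$; dividing by $\e(\log\log 1/\e)^{-1/Q}$ yields a term of order $(\e^{1/\theta_1-1}+\e^{1/\theta_2-1})(\log\log 1/\e)^{1/Q}\to0$, so sandwiching the normalized supremum for $u$ between that for $U$ and this vanishing $V$-term and passing to the $\liminf$ gives $k_3$.

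The only genuinely delicate point is the scale comparison of the second paragraph: one must verify that the order-one increments of the smooth part $V$ are negligible against all three normalizers simultaneously and in every regime (pure-time, pure-space, and mixed). This is precisely where the constraints $\theta_1,\theta_2\in(0,1)$ and the differentiability afforded by \eqref{e:con-space-add} enter; everything else is a routine transfer of Proposition \ref{prop:modulus-both-U} through the triangle inequality.
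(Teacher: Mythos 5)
Your proof is correct and takes essentially the same route as the paper: the paper's own proof of Theorem \ref{th:modulus-both} is a one-line appeal to the decomposition $u=U-V$, Proposition \ref{prop:modulus-both-U} for $U$, and Propositions \ref{prop:reg-V} and \ref{prop:reg-V-add} for the smoothness of $V$. Your quantitative details --- the a.s.\ Lipschitz bound on $V$ from continuous differentiability, the scale comparison showing Lipschitz increments are $o(\rho)$ because $\theta_1,\theta_2\in(0,1)$, and the triangle-inequality sandwich against each of the three normalizers --- simply make explicit the transfer that the paper treats as immediate.
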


\begin{proof}
  Since   $V(t,x)$  is smoother than $U(t,x)$ on $I=[a,b]\times[-M,M]^d$   by  Propositions \ref{prop:reg-V} and \ref{prop:reg-V-add},  the regularity properties of $u(t,x)=U(t,x)-V(t,x)$ are determined  by the increment-stationary field $U(t,x)$ (see Proposition  \ref{prop:modulus-both-U}), and thus we have the above regularity results for $u(t,x)$.
\end{proof}

\begin{remark}[A comparison of Theorem \ref{th:modulus-both} with known results]\label{re:comp-both}\hfill
\begin{enumerate}
  \item For the fractional heat equation (i.e., $\beta=1$ and $\gamma=0$), the joint uniform and local moduli of continuity are consistent with
   \cite[Proposition 3.7]{Herrell2020sharp}, and Chung's LIL coincides with \cite[Proposition 3.10]{Herrell2020sharp}.
  \item  To our best knowledge, the joint regularity properties are new for the case when $\beta\neq1$.
\end{enumerate}
\end{remark}

\subsection{Temporal regularity}

In this subsection, we obtain the temporal regularity of $u(t,x)$ in Theorem \ref{th:modulus-time}. Recall that $U(t,x)$ and $V(t,x)$ are defined in \eqref{e:de-U} and \eqref{e:de-V}, respectively. For any fixed $x\in\R^d$, define
  \begin{equation}\label{e:de-tilde-U}
    \tilde{U}_x(t):=U(t,x)-U(0,x),
  \end{equation}
  and
  \begin{equation}\label{e:de-tilde-V}
    \tilde{V}_x(t):=V(t,x)-V(0,x).
  \end{equation}
  Note that $U(0,x)=V(0,x)$. By the definition of $\tilde{U}_x(t)$ and $\tilde{V}_x(t)$, we have
     \begin{equation}\label{e:u=tildeUV}
       u(t,x)=\tilde{U}_x(t)-\tilde{V}_x(t).
     \end{equation}
We will show that the $\tilde{V}_x(t)$ has smoother sample paths than $\tilde{U}_x(t)$ in $(0,\infty)$, and
hence the regularity properties of $t \mapsto u(t,x)$ solely depend on those of $\tilde{U}_x(t)$ in $(0,\infty)$.

The following proposition is a key ingredient to study the sample path regularity of $\tilde{U}_x(t)$.
\begin{lemma}\label{le:sLND-tilde-U}
Recall that $\theta_1$ is given in \eqref{e:theta12}. Assume  \eqref{e:DL} holds and
\begin{equation}\label{e:tilde-U-exist}
    \theta_1<1.
\end{equation}
\begin{enumerate}
  \item The Gaussian process $\tilde U_x=\left\{\tilde U_x(t),t\in\R\right\}$ has stationary increments with spectral measure
  \begin{equation*}
    F_{\tilde U}(\ud\tau)=\frac{C\ud \tau}{|\tau|^{2\theta_1+1}}.
  \end{equation*}
  \item The Gaussian process $\tilde U_x$ satisfies the strong local nondeterminism  property: there exists
  a constant $C>0$ such that for any $n\in\N$ and $t,t^1,\dots,t^n\in\R$,
  \begin{equation}\label{e:sLND-tilde-U}
    \Var\big(\tilde U_x(t)|\tilde U_x(t^1),\dots,\tilde U_x(t^n)  \big)\ge C\min_{k=0,\dots,n} |t-t^k|^{2\theta_1} ,
  \end{equation}
  where $t^0=0$.
  \item 
  For all $t,s \ge0$,
  \begin{equation*}
    \E\left[ |\tilde{U}_x(t)-\tilde{U}_x(s)|^2 \right]= C_1|t-s|^{2\theta_1},
  \end{equation*}
  where
  \begin{equation}\label{e:de-C1}
      C_1=2C_{H_0}\int_{\R}\frac{1-\cos\big(\tau \big)}{|\tau|^{2\theta_1+1}}\ud \tau \int_{\R^d}\frac{\prod_{j=1}^{d}c_{H_j}|\xi_j|^{1-2H_j}}
      {1+2|\xi|^\alpha\cos\left(\frac{\pi\beta}{2}\right)+|\xi|^{2\alpha}}\ud\xi.
  \end{equation}
Consequently, $\left\{\frac{\tilde{U}_x(t)}{\sqrt{C_1}},t\ge0\right\}$ is a (standard) fractional Brownian motion with Hurst index $\theta_1\in(0,1)$.
\end{enumerate}
\end{lemma}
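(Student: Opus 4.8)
The plan is to recognize $\tilde U_x$ as a scaled fractional Brownian motion and to reduce all three assertions to the temporal computation already carried out inside Lemma~\ref{le:sLND-U}. The starting observation is that the constant $U(0,x)$ cancels in increments, so that $\tilde U_x(t)-\tilde U_x(s)=U(t,x)-U(s,x)$ for all $s,t$, while $\tilde U_x(0)=0$. In particular $\tilde U_x$ is a centered Gaussian process whose increments coincide with the temporal increments of the field $U$ at the fixed spatial point $x$. Equivalently, writing out \eqref{e:de-U2} one has the Wiener-integral representation $\tilde U_x(t)=\int_\R\int_{\R^d}\big[p((t-r)_+,x-y)-p((-r)_+,x-y)\big]\,W(\ud r,\ud y)$, which makes clear that $\tilde U_x$ is well defined as soon as its incremental variance is finite.

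First I would establish part (3). The formula \eqref{e:proof-spectral-tilde-U}, derived in Step~3 of the proof of Lemma~\ref{le:sLND-U}, already gives $\E[|U(t,x)-U(s,x)|^2]=C_1|t-s|^{2\theta_1}$ with $C_1$ as in \eqref{e:de-C1}, hence $\E[|\tilde U_x(t)-\tilde U_x(s)|^2]=C_1|t-s|^{2\theta_1}$. The point that needs care is that Lemma~\ref{le:sLND-U} was stated under the full hypotheses \eqref{e:U-exist}, whereas here only \eqref{e:DL} and $\theta_1<1$ are assumed. I would therefore re-examine the derivation of \eqref{e:proof-spectral-tilde-U} and check that it uses nothing beyond these: the spectral identity \eqref{e:E-utx-usy} relies only on the Mittag-Leffler Fourier transform (Lemma~\ref{le:Fourier-MLF}), valid for $\gamma<1$; the change of variables $\xi=|\tau|^{\beta/\alpha}\tilde\xi$ followed by $\tilde\tau=(t-s)\tau$ factorizes the integral into $\int_\R\frac{1-\cos\tau}{|\tau|^{2\theta_1+1}}\ud\tau$, finite exactly when $\theta_1\in(0,1)$, times a spatial integral that is finite because $\alpha>d-H$ under \eqref{e:DL} (Lemma~\ref{le:space-integral}). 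The remaining conditions in \eqref{e:U-exist}, namely $\theta_2<1$ and $\gamma<1-H_0$, enter only in the spatial estimates \eqref{c31} and \eqref{e:proof-spectral-hat-U} and play no role here. Once the incremental variance is secured, the fractional Brownian motion identification is immediate: for a centered Gaussian process $X$ with $X(0)=0$ one has $\E[X(t)X(s)]=\tfrac12\big(\E[X(t)^2]+\E[X(s)^2]-\E[|X(t)-X(s)|^2]\big)$, so $\E[\tilde U_x(t)\tilde U_x(s)]=\tfrac{C_1}{2}\big(|t|^{2\theta_1}+|s|^{2\theta_1}-|t-s|^{2\theta_1}\big)$, which is $C_1$ times the covariance of fractional Brownian motion with Hurst index $\theta_1\in(0,1)$.

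For part (1), stationary increments then follow at once, since for a centered Gaussian process vanishing at $0$ this is equivalent to the incremental variance depending only on the lag $t-s$, which we have just verified. To identify the spectral measure I would integrate out $\xi$ in the full spectral measure \eqref{e:spectral-measure-U}: applying the scaling $\xi=|\tau|^{\beta/\alpha}\tilde\xi$ to $\int_{\R^d}F_U(\ud\tau,\ud\xi)$ collapses the $|\tau|$-exponents to $1-2\gamma-2H_0+\tfrac{\beta}{\alpha}(2d-2H)-2\beta=-(2\theta_1+1)$, leaving $F_{\tilde U}(\ud\tau)=C|\tau|^{-(2\theta_1+1)}\ud\tau$ with $C=C_{H_0}\int_{\R^d}\frac{\prod_{j}c_{H_j}|\xi_j|^{1-2H_j}}{1+2|\xi|^\alpha\cos(\pi\beta/2)+|\xi|^{2\alpha}}\ud\xi$; this is finite under \eqref{e:DL} and is consistent with $C_1=2C\int_\R\frac{1-\cos\tau}{|\tau|^{2\theta_1+1}}\ud\tau$. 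In fact this density can be read off directly from the intermediate line of \eqref{e:proof-spectral-tilde-U}, in which the $\tau$-weight is precisely $|\tau|^{-(2\theta_1+1)}$.

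Finally, part (2) is the classical strong local nondeterminism of fractional Brownian motion: having identified $\tilde U_x/\sqrt{C_1}$ as standard fractional Brownian motion of index $\theta_1\in(0,1)$, the bound \eqref{e:sLND-tilde-U} is one of its standard properties. Equivalently, since the spectral density $f_{\tilde U}(\tau)=C|\tau|^{-(2\theta_1+1)}$ is homogeneous of the correct order, \eqref{e:sLND-tilde-U} follows from the same criterion (Theorem~4.1 in \cite[Appendix A]{Herrell2020sharp}) that was invoked for the field $U$ in Step~2 of Lemma~\ref{le:sLND-U}. The only genuine obstacle in the whole argument is the bookkeeping of the second paragraph---confirming that the temporal computation survives under the weakened hypotheses \eqref{e:DL} and $\theta_1<1$---after which the three assertions are corollaries of well-known facts about fractional Brownian motion.
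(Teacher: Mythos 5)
Your proposal is correct and follows essentially the same route as the paper's proof: it reuses the temporal computation \eqref{e:proof-spectral-tilde-U} together with the finiteness of the spatial integral \eqref{e:time-rgular-2-integral} for parts (1) and (3), invokes the spectral-density scaling and Theorem~4.1 of \cite[Appendix A]{Herrell2020sharp} for the SLND in part (2), and identifies $\tilde U_x/\sqrt{C_1}$ as fractional Brownian motion by polarization. Your explicit check that only \eqref{e:DL} and $\theta_1<1$ (not the full set \eqref{e:U-exist}) are needed for the temporal estimate is a point the paper leaves implicit, but it does not constitute a different argument.
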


\begin{proof}
  By the definition of $\tilde{U}_x(t)$ and \eqref{e:proof-spectral-tilde-U}, we have
  \begin{align*}
    &\E\Big[\big|\tilde{U}_x(t)-\tilde{U}_x(s)\big|^2\Big]=\E\left[ |U(t,x)-U(s,x)|^2 \right]\\
    &=2C_{H_0}\int_{\R^{d+1}} \frac{ 1-\cos\big( (t-s)\tau \big) }{|\tau|^{2\beta+2\gamma+2H_0-1-\frac\beta\alpha(2d-2H)}}
    \frac{\prod_{j=1}^{d}c_{H_j} |\xi_j|^{1-2H_j}}{1+2|\xi|^\alpha\cos\left(\frac{\pi\beta}{2}\right)+|\xi|^{2\alpha}}\ud\xi\ud \tau\\
    &=C\int_{\R}  \frac{ 1-\cos\big( (t-s)\tau \big) }{|\tau|^{2\theta_1+1}}\ud \tau,
  \end{align*}
  where the integral with respect to $\xi$ is finite due to \eqref{e:time-rgular-2-integral} and then the first assertion follows.

  The spectral density of $\big\{\tilde U_x(t),t\in\R\big\}$ is
  \begin{equation*}
    f_{\tilde U}(\tau)=\frac{C }{|\tau|^{2\theta_1+1}}.
  \end{equation*}
  Since for every constant $c>0$, 
  \begin{equation*}
    f_{\tilde U}(c^\frac{1}{\theta_1}\tau)=c^{-(2+\frac{1}{\theta_1})} f_{\tilde U}(\tau), 
  \end{equation*}
 the SLND property follows directly from Theorem 4.1 in \cite[Appendix A]{Herrell2020sharp}.

By \eqref{e:up-bound-t}, we have
\begin{align*}
    &\E\Big[\big|\tilde{U}_x(t)-\tilde{U}_x(s)\big|^2\Big]=\E\left[ |U(t,x)-U(s,x)|^2 \right]=C_1 |t-s|^{2\theta_1},
  \end{align*}
where $C_1$ is the constant in \eqref{e:de-C1}.
Noting $\tilde{U}_x(0)=0$,  we have $\E\big[ |\tilde{U}_x(t)|^2 \big] = C_1 |t|^{2\theta_1}$ and $\tilde{U}_x(t)$ is well defined for all $t\in\R$.
Then,
    \begin{equation*}
      \frac{1}{C_1} \E\left[ \tilde{U}_x(t) \tilde{U}_x(s)\big) \right] =\frac{1}{2}\left( |t|^{2\theta_1}+|s|^{2\theta_1}-|t-s|^{2\theta_1} \right).
    \end{equation*}
Consequently,  $\left\{\frac{\tilde{U}_x(t)}{\sqrt{C_1}},t\ge0\right\}$ is a standard fractional Brownian motion with Hurst index $\theta_1\in(0,1)$.
    This finishes the proof.
\end{proof}

Now we are ready to study the regularity of $\{ \tilde{U}_x(t),\, t\in \R \}$.

\begin{proposition}\label{prop:modulus-time-U}
   Assume the conditions of Lemma \ref{le:sLND-tilde-U} hold. Recall that $\theta_1$ and $C_1$ are given in  \eqref{e:theta12} and
   \eqref{e:de-C1}, respectively. For any  fixed $x\in\R^d$, we have the following temporal regularity results:
  \begin{enumerate}
    \item (Uniform modulus of continuity). For any $0\le a<b<\infty$,
    \begin{equation*}
      \lim_{\e\to0^+}\sup_{\substack{s,t\in[a,b];\\ |t-s|\le\e}}\dfrac{|\tilde U_x(t)-\tilde U_x(s)|}{|t-s|^{\theta_1}\sqrt{\log\left(1+|t-s|^{-1}\right)}}=\sqrt{2C_1},
      \text{ a.s. }
    \end{equation*}
    \item (Local modulus of continuity). For any $t\ge0$,
    \begin{equation*}
      \lim_{\e\to0^+}\sup_{|s|\le \e} \dfrac{|\tilde U_x(t+s)-\tilde U_x(t)|}{|s|^{\theta_1} \sqrt{\log\log\left(1+|s|^{-1}\right)}}=\sqrt{2C_1},
      \text{ a.s. }
    \end{equation*}
    \item (Chung's LIL). For any $t\ge0$,
    \begin{equation*}
      \liminf_{\e\to0^+} \sup_{|s|\le \e} \dfrac{|\tilde U_x(t+s)-\tilde U_x(t)|}{\e^{\theta_1}\left(\log\log \e^{-1}\right)^{-\theta_1}}= \sqrt{C_1}  \kappa^{\theta_1}, \text{ a.s. }
    \end{equation*}
    where $\kappa$ is the small ball constant of a fractional Brownian motion with index $\theta_1$ (e.g. \cite[Theorem 6.9]{Li2001Gaussian}).
   \end{enumerate}
\end{proposition}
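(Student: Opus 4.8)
The plan is to leverage the exact identification, established in Lemma~\ref{le:sLND-tilde-U}(3), of $\{\tilde U_x(t)/\sqrt{C_1},\, t\ge 0\}$ as a \emph{standard} fractional Brownian motion of Hurst index $\theta_1\in(0,1)$. Because the process is, up to the deterministic scaling $\sqrt{C_1}$, exactly an fBm rather than merely an anisotropic field known only to satisfy SLND, all three limiting constants can be pinned down explicitly---this is precisely what distinguishes the present temporal statement from Proposition~\ref{prop:modulus-both-U}, where only the existence of the constants $k_1,k_2,k_3$ was asserted. Thus each assertion reduces to the corresponding classical sample-path theorem for fBm, with the factor $\sqrt{C_1}$ propagating linearly through the numerator.

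For parts (1) and (2), I would invoke the exact uniform and local moduli of continuity for a Gaussian process with stationary increments whose variance function is exactly $\E[|\tilde U_x(t)-\tilde U_x(s)|^2]=C_1|t-s|^{2\theta_1}$; these follow from the SLND-based results \cite[Theorem~4.1, Theorem~5.6]{Xiao2013Fernique} (whose hypotheses are guaranteed by the SLND property \eqref{e:sLND-tilde-U} together with the matching upper and lower variance bounds), specialized to the one-dimensional fBm. In normalized form the limiting constants equal $\sqrt{2}$, so that writing the standard deviation as $\sigma(h)=\sqrt{C_1}\,h^{\theta_1}$ and multiplying back by $\sqrt{C_1}$ produces the stated constant $\sqrt{2C_1}$ in both the uniform modulus (with $\sqrt{\log(1+|t-s|^{-1})}$) and the local modulus (with $\sqrt{\log\log(1+|s|^{-1})}$). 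For part~(3), I would apply the Chung-type law of the iterated logarithm \cite[Theorem~1.1]{Xiao2010Chung}; for standard fBm of index $\theta_1$ this yields the liminf constant $\kappa^{\theta_1}$, where $\kappa$ is the small-ball constant of fBm from \cite[Theorem~6.9]{Li2001Gaussian}, and the scaling by $\sqrt{C_1}$ gives $\sqrt{C_1}\,\kappa^{\theta_1}$.

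The hard part will be the constant bookkeeping rather than any new probabilistic input. Concretely, I must verify that the logarithmic factors in the cited theorems, typically written with $\log(1/|t-s|)$ and $\log\log(1/|s|)$, may be replaced by the paper's $\log(1+|t-s|^{-1})$ and $\log\log(1+|s|^{-1})$ without altering the limit (they are asymptotically equivalent as the argument tends to $0$), and that the deterministic scale $\sqrt{C_1}$ enters the two moduli \emph{multiplicatively} as $\sqrt{C_1}$, whereas in Chung's LIL it combines with the $\theta_1$-self-similar small-ball rate to give exactly $\sqrt{C_1}\,\kappa^{\theta_1}$. Once these normalizations are matched, parts (1)--(3) follow directly from the fBm structure.
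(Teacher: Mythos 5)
Your starting point --- identifying $\{\tilde U_x(t)/\sqrt{C_1},\,t\ge 0\}$ as a standard fractional Brownian motion of index $\theta_1$ via Lemma~\ref{le:sLND-tilde-U}(3) and letting the deterministic factor $\sqrt{C_1}$ propagate linearly through the three limits --- is exactly the paper's route, and your scaling bookkeeping ($\sqrt{2C_1}$ for the two moduli, $\sqrt{C_1}\,\kappa^{\theta_1}$ for Chung's LIL) is correct. The gap is in where you claim the exact fBm constants come from. Theorems~4.1 and~5.6 of \cite{Xiao2013Fernique} and Theorem~1.1 of \cite{Xiao2010Chung} are the general SLND-based results: they assert only the \emph{existence} of a positive finite limiting constant and do not identify it, even when the field in question happens to be an exact fBm. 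This is precisely why Proposition~\ref{prop:modulus-both-U} and Proposition~\ref{prop:modulus-space-U}, which are proved from those very theorems, can only state unidentified constants $k_1,\dots,k_6$. Consequently your step ``in normalized form the limiting constants equal $\sqrt 2$'' is not delivered by the theorems you cite; followed literally, your argument proves only the weaker statement that the three limits exist and are positive and finite, which misses the whole point of this proposition.

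The repair is what the paper actually does: once the process is recognized as $\sqrt{C_1}$ times a standard fBm, invoke the fBm-specific classical results --- the exact uniform and local moduli of continuity in \cite[(7.5), (7.6)]{Li2001Gaussian}, which give the constant $\sqrt 2$ under the paper's normalization of the logarithmic factor, and for part (3) the Monrad--Rootz\'en-type Chung LIL \cite[Theorem 7.1]{Li2001Gaussian}, which identifies the liminf constant as $\kappa^{\theta_1}$ with $\kappa$ the small ball constant from \cite[Theorem 6.9]{Li2001Gaussian}. These exact-constant statements rest on stationarity of increments, self-similarity, and the existence of the fBm small ball constant, none of which is recovered by the general anisotropic SLND machinery. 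With the citations replaced accordingly, the remainder of your argument (asymptotic equivalence of $\log(1/|t-s|)$ with $\log(1+|t-s|^{-1})$ as the increment tends to zero, and the multiplicative scaling by $\sqrt{C_1}$) goes through and reproduces the paper's proof.
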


\begin{proof}
As shown in Lemma \ref{le:sLND-tilde-U} (3), $\left\{\frac{\tilde U_x(t)}{\sqrt{C_1}},t\ge0\right\}$ is a fractional Brownian motion.
Hence, the uniform and local moduli of continuity of $\{ \tilde{U}_x(t),\, t\in \R \}$ with the
exact constants follow from \cite[(7.5), (7.6)]{Li2001Gaussian}. Finally, Chung's LIL follows from \cite[Theorem 7.1]{Li2001Gaussian}.
\end{proof}

The following proposition shows that the sample function of $\tilde{V}_x(t)$ is more regular than $\tilde{U}_x(t)$ in $(0,\infty)$.

\begin{proposition}\label{prop:reg-tilde-V}
  Assume the conditions of Lemma \ref{le:sLND-tilde-U} hold. Then, for any fixed $x\in\R^d$, $\big\{\tilde{V}_x(t),t\ge0\big\}$
  is well-defined. For  any  $0<a<b<\infty$, there exists a finite constant $C > 0$ such that for all $s,t\in[a,b]$,
  \begin{equation}\label{e:upb-tilde-vtx-vsx}
  \E\left[ \big|\tilde{V}_x(t)-\tilde{V}_x(s)\big|^2 \right] \le C |t-s|^2.
  \end{equation}
   Consequently, the function $t\mapsto \tilde{V}_x(t)$ is smoother than $t\mapsto \tilde{U}_x(t)$ on $[a,b]$.
\end{proposition}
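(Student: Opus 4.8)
The plan is to reduce everything to the temporal estimate already established for $V(t,x)$ in Proposition~\ref{prop:reg-V}(1), after making two preliminary observations: that the increments of $\tilde{V}_x$ coincide with those of $V(\cdot,x)$, and that the relevant bound needs only the two hypotheses available here, namely \eqref{e:DL} and $\theta_1<1$, rather than the full condition \eqref{e:U-exist} used in Proposition~\ref{prop:reg-V}.

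First I would establish that $\tilde{V}_x$ is well-defined. Instead of appealing to the finiteness of $\E[|V(t,x)|^2]$ (which in Proposition~\ref{prop:reg-V} relied on Lemma~\ref{le:sLND-U} and hence on the stronger condition \eqref{e:U-exist}), I would use the decomposition \eqref{e:u=tildeUV}, that is $\tilde{V}_x(t)=\tilde{U}_x(t)-u(t,x)$. By Lemma~\ref{le:sLND-tilde-U} the process $\tilde{U}_x$ is well-defined under \eqref{e:DL} and $\theta_1<1$, and by Theorem~\ref{th:ex-solution} the solution $u(\cdot,x)$ is well-defined under \eqref{e:DL}; hence
\[
\E\big[|\tilde{V}_x(t)|^2\big]\le 2\,\E\big[|\tilde{U}_x(t)|^2\big]+2\,\E\big[|u(t,x)|^2\big]<\infty,
\]
so $\{\tilde{V}_x(t),t\ge0\}$ is well-defined.

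Next, since $\tilde{V}_x(t)-\tilde{V}_x(s)=V(t,x)-V(s,x)$ (the $V(0,x)$ terms cancel, exactly as in \eqref{e:vtx-vsy0}), the quantity to estimate is precisely $\E[|V(t,x)-V(s,x)|^2]$. The crucial point is that the computation yielding \eqref{e:d1-pre}--\eqref{e:upb-vtx-vsx} in the proof of Proposition~\ref{prop:reg-V}(1) never invokes the conditions $\theta_2<1$ or $\gamma<1-H_0$ from \eqref{e:U-exist}: after applying \eqref{e:innerproduct-cH}, \eqref{e:de-V}, Lemma~\ref{le:B.2}, Minkowski's inequality and the mean-value bound on the Fourier transform via \eqref{e:differential-E} and \eqref{e:bound-MLF}, the convergence of the $r$-integral reduces to the single requirement
\[
2H_0+2\beta+2\gamma-4-\tfrac{\beta(2d-2H)}{\alpha}<0,
\]
which is exactly the assumption $\theta_1<1$, while the finiteness of the spatial integral $c_1=\int_{\R^d}\frac{\prod_{j=1}^{d}c_{H_j}|\xi_j|^{1-2H_j}}{1+|\xi|^{2\alpha}}\,\ud\xi$ follows from \eqref{e:DL} through Lemma~\ref{le:space-integral}. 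I would therefore rerun that argument under the present weaker hypotheses to obtain $\E[|\tilde{V}_x(t)-\tilde{V}_x(s)|^2]\le C|t-s|^2$ for all $s,t\in[a,b]$, which is \eqref{e:upb-tilde-vtx-vsx}.

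Finally, the comparison with $\tilde{U}_x$ is immediate: by Lemma~\ref{le:sLND-tilde-U}(3) one has $\E[|\tilde{U}_x(t)-\tilde{U}_x(s)|^2]=C_1|t-s|^{2\theta_1}$, and since $\theta_1<1$ forces $2>2\theta_1$, the $|t-s|^2$ bound shows that $t\mapsto\tilde{V}_x(t)$ is smoother than $t\mapsto\tilde{U}_x(t)$ on $[a,b]$. The only genuine subtlety—and the step I would flag as the main obstacle—is the bookkeeping in the preceding paragraph: one must verify carefully that every place in the proof of Proposition~\ref{prop:reg-V}(1) where a hypothesis from \eqref{e:U-exist} is cited is in fact a place where only $\theta_1<1$ is actually used, so that the estimate persists after the stronger conditions $\theta_2<1$ and $\gamma<1-H_0$ are dropped.
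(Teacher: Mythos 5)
Your proposal is correct and follows essentially the same route as the paper's own proof: well-definedness via the identity $\tilde{V}_x(t)=\tilde{U}_x(t)-u(t,x)$ combined with Theorem~\ref{th:ex-solution} and Lemma~\ref{le:sLND-tilde-U}, and the increment bound \eqref{e:upb-tilde-vtx-vsx} by reduction to the temporal estimate \eqref{e:upb-vtx-vsx} of Proposition~\ref{prop:reg-V}(1), since $\tilde{V}_x(t)-\tilde{V}_x(s)=V(t,x)-V(s,x)$. Your additional bookkeeping---verifying that the derivation of \eqref{e:upb-vtx-vsx} uses only \eqref{e:DL} and $\theta_1<1$ from \eqref{e:tilde-U-exist}, and not the stronger conditions $\theta_2<1$ and $\gamma<1-H_0$ of \eqref{e:U-exist} under which Proposition~\ref{prop:reg-V} is stated---is a point the paper passes over silently when citing \eqref{e:upb-vtx-vsx} under the weaker hypotheses, so it is a needed verification of the same argument rather than a different approach.
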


\begin{proof}
  Note that $U(0,x)=V(0,x)$. By \eqref{e:split-u} and definition of $\tilde{U}_x(t)$ \eqref{e:de-tilde-U} and $\tilde{V}_x(t)$ \eqref{e:de-tilde-V},
  we have
 \begin{align*}
   &\E[|\tilde{V}_x(t)|^2]=\E[|U(t,x)-u(t,x)-V(0,x)|^2]\\
   &=\E[|\tilde{U}_x(t)-u(t,x)|^2]\le 2\E[|\tilde{U}_x(t)|^2]+2\E[|u(t,x)|^2]<\infty,
 \end{align*}
 under conditions \eqref{e:DL} and \eqref{e:tilde-U-exist}, due to Theorem \ref{th:ex-solution} and Lemma \ref{le:sLND-tilde-U}. Then,
 $\tilde{V}_x(t)$ is well-defined.

By \eqref{e:de-tilde-V} and \eqref{e:upb-vtx-vsx}, we have
 \begin{equation*}
 \E\left[ \big|\tilde{V}_x(t)-\tilde{V}_x(s)\big|^2 \right]  = \E\left[ \big|V(t,x)-V(s,x)\big|^2 \right]\le C |t-s|^2.
 \end{equation*}
 This completes the proof.
\end{proof}

Now, we state the main result in this subsection.
\begin{theorem}[Temporal regularity]\label{th:modulus-time}
   Assume the conditions of Lemma \ref{le:sLND-tilde-U} hold  and recall that $\theta_1$ and $C_1$ are given in  \eqref{e:theta12} and
   \eqref{e:de-C1}, respectively. For any  fixed $x\in\R^d$, we have the following temporal regularity results:
  \begin{enumerate}
    \item (Uniform modulus of continuity). For any $0<a<b$,
    \begin{equation*}
      \lim_{\e\to0^+}\sup_{\substack{s,t\in[a,b];\\ |t-s|\le\e}}\dfrac{|u(t,x)-u(s,x)|}{|t-s|^{\theta_1}\sqrt{\log\left(1+|t-s|^{-1}\right)}}=\sqrt{2C_1},
      \text{ a.s. }
    \end{equation*}
    \item (Local modulus of continuity). For any $t\in(0,\infty)$,
    \begin{equation*}
      \lim_{\e\to0^+}\sup_{|s|\le \e} \dfrac{|u(t+s,x)-u(t,x)|}{|s|^{\theta_1} \sqrt{\log\log\left(1+|s|^{-1}\right)}}=\sqrt{2C_1},
      \text{ a.s. }
    \end{equation*}
    \item (Chung's LIL). For any $t\in(0,\infty)$,
    \begin{equation*}
      \liminf_{\e\to0^+} \sup_{|s|\le \e} \dfrac{|u(t+s,x)-u(t,x)|}{\e^{\theta_1}\left(\log\log \e^{-1}\right)^{-\theta_1}}=\sqrt{C_1}  \kappa^{\theta_1}, \text{ a.s.},
    \end{equation*}
    where $\kappa$ is the small ball constant of a fractional Brownian motion with index $\theta_1$ (e.g. \cite[Theorem 6.9]{Li2001Gaussian}).
   \end{enumerate}
   \end{theorem}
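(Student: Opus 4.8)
The plan is to exploit the decomposition \eqref{e:u=tildeUV}, namely $u(t,x)=\tilde{U}_x(t)-\tilde{V}_x(t)$, and to transfer the exact temporal regularity already established for the rescaled fractional Brownian motion $\tilde{U}_x$ in Proposition \ref{prop:modulus-time-U} onto $u(\cdot,x)$, after checking that the smoother component $\tilde{V}_x$ contributes nothing to the limiting constants. Throughout, $x\in\R^d$ is fixed and all estimates are taken on a compact interval $[a,b]\subset(0,\infty)$. Since Proposition \ref{prop:modulus-time-U} already delivers the exact constants $\sqrt{2C_1}$ (uniform and local moduli) and $\sqrt{C_1}\,\kappa^{\theta_1}$ (Chung's LIL) for $\tilde{U}_x$, the entire task reduces to a negligibility argument for $\tilde{V}_x$.

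First I would record an almost-sure upper bound for the increments of $\tilde{V}_x$. By Proposition \ref{prop:reg-tilde-V}, the canonical metric of the Gaussian process $\{\tilde{V}_x(t),t\in[a,b]\}$ satisfies $\E[|\tilde{V}_x(t)-\tilde{V}_x(s)|^2]^{1/2}\le C|t-s|$, so by the standard upper bound for the modulus of continuity of a Gaussian process with Lipschitz canonical metric one has, almost surely for all small $\e>0$,
\[
  \sup_{\substack{s,t\in[a,b]\\ |t-s|\le\e}} |\tilde{V}_x(t)-\tilde{V}_x(s)| \le C'\,\e\,\sqrt{\log\bigl(1+\e^{-1}\bigr)}.
\]
The decisive feature, which hinges entirely on $\theta_1<1$ from \eqref{e:tilde-U-exist}, is that this Lipschitz-type scale is of strictly smaller order than the $\theta_1$-scale governing $\tilde{U}_x$.

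Next I would transfer the uniform and local moduli. Writing $\phi(h)=h^{\theta_1}\sqrt{\log(1+h^{-1})}$ for the uniform normalizer, the reverse triangle inequality together with the previous display yields
\[
  \Bigl| \frac{|u(t,x)-u(s,x)|}{\phi(|t-s|)} - \frac{|\tilde{U}_x(t)-\tilde{U}_x(s)|}{\phi(|t-s|)} \Bigr| \le \frac{|\tilde{V}_x(t)-\tilde{V}_x(s)|}{\phi(|t-s|)} \le C'\,|t-s|^{1-\theta_1},
\]
where the two logarithmic factors cancel exactly. Taking the supremum over $|t-s|\le\e$ and letting $\e\to0^+$, the right-hand side vanishes a.s., so the uniform modulus of $u(\cdot,x)$ equals that of $\tilde{U}_x$, namely $\sqrt{2C_1}$ by Proposition \ref{prop:modulus-time-U}(1). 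For the local modulus the normalizer carries $\sqrt{\log\log(1+|s|^{-1})}$ instead; the same bound gives a residual factor $C'|s|^{1-\theta_1}\sqrt{\log(1+|s|^{-1})/\log\log(1+|s|^{-1})}\to0$, recovering the constant $\sqrt{2C_1}$ of Proposition \ref{prop:modulus-time-U}(2).

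Finally, for Chung's LIL I would use the reverse triangle inequality for the inner supremum,
\[
  \Bigl| \sup_{|s|\le\e}|u(t+s,x)-u(t,x)| - \sup_{|s|\le\e}|\tilde{U}_x(t+s)-\tilde{U}_x(t)| \Bigr| \le \sup_{|s|\le\e}|\tilde{V}_x(t+s)-\tilde{V}_x(t)| \le C'\,\e\,\sqrt{\log(1+\e^{-1})},
\]
and then divide by the Chung normalizer $\e^{\theta_1}(\log\log\e^{-1})^{-\theta_1}$, obtaining a bound $C'\,\e^{1-\theta_1}\sqrt{\log(1+\e^{-1})}\,(\log\log\e^{-1})^{\theta_1}\to0$. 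Hence the $\liminf$ for $u(\cdot,x)$ coincides with that for $\tilde{U}_x$, giving $\sqrt{C_1}\,\kappa^{\theta_1}$ by Proposition \ref{prop:modulus-time-U}(3). I expect the only genuinely delicate step to be this last one, where the liminf and the inner supremum must be compared simultaneously; the subtlety is to confirm that the $\tilde{V}_x$-error is of strictly lower order \emph{uniformly} in $\e$, and this again reduces to the single inequality $\theta_1<1$.
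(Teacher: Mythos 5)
Your proposal is correct and follows essentially the same route as the paper's proof: the decomposition \eqref{e:u=tildeUV}, the exact constants for $\tilde{U}_x$ from Proposition \ref{prop:modulus-time-U}, and the negligibility of $\tilde{V}_x$ via the Lipschitz bound of Proposition \ref{prop:reg-tilde-V}. The only difference is that you spell out explicitly (via the a.s.\ modulus bound $C'\e\sqrt{\log(1+\e^{-1})}$ for $\tilde{V}_x$ and the triangle-inequality transfer, using $\theta_1<1$) what the paper compresses into the single assertion that $\tilde{V}_x$ is ``smoother'' than $\tilde{U}_x$ and therefore does not affect the limiting constants.
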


   \begin{proof}
     Since   $\tilde{V}_x(t)$  is smoother than $\tilde{U}_x(t)$ on $I=[a,b]$ with $0<a<b$  by  Proposition \ref{prop:reg-tilde-V},  the regularity properties
     of $u(t,x)$ are determined by the increment-stationary field $\tilde{U}_x(t)$ (see Proposition  \ref{prop:modulus-time-U}), and thus we have the
     above regularity results for $u(t,x)$.
   \end{proof}

\begin{remark}[A comparison of Theorem \ref{th:modulus-time} with the known results]\label{re:comp-time}\hfill
 \begin{enumerate}
   \item For the fractional heat equation (i.e., $\beta=1$ and $\gamma=0$), the uniform and local moduli of continuity in time are consistent with
    \cite[Corollary 3.8]{Herrell2020sharp}.
   \item If $\alpha=2$, $\gamma=1-\beta$ and the noise $\W$ is a space-time white noise (i.e., $H_0=H_1=\cdots=H_d=\frac{1}{2}$, $d=1,2,3$),
   then the  uniform and local moduli of continuity in time are consistent with
   \cite[Theorem 1.4]{Xiao2017JDE}.
 \end{enumerate}
\end{remark}

\subsection{Spatial regularity}

In this subsection, we obtain the spatial regularity of $u(t,x)$ in Theorem \ref{th:modulus-space}. Recall that $U(t,x)$ and $V(t,x)$
are defined in \eqref{e:de-U} and \eqref{e:de-V}, respectively. For any fixed $t\ge0$, define
  \begin{equation}\label{e:de-hat-U}
    \hat{U}_t(x):=U(t,x)-U(t,0), \ \hbox{ for } x \in \R^d
  \end{equation}
  and
  \begin{equation}\label{e:de-hat-V}
    \hat{V}_t(x):=V(t,x)-V(t,0),\  \hbox{ for } x \in \R^d.
  \end{equation}
Then define
\begin{equation}\label{e:de-hat-u}
    \hat{u}_t(x):=u(t,x)-u(t,0)=\hat{U}_t(x)-\hat{V}_t(x).
  \end{equation}

The following proposition is a key ingredient to study the sample path regularity of $\hat{U}_t(x)$.
\begin{lemma}\label{le:sLND-hat-U}
Recall that $\theta_2$ is given in \eqref{e:theta12}. Assume  \eqref{e:DL} holds and
\begin{equation}\label{e:hat-U-exist}
     \begin{cases}
       \theta_2<1,  \\
       \gamma<1-H_0.
     \end{cases}
  \end{equation}
\begin{enumerate}
  \item The Gaussian random field $\hat U_t =\big\{\tilde U_t(x),x\in\R^d\big\}$ has stationary increments with spectral measure
  \begin{equation*}
    F_{\hat U}(\ud\xi)= \frac{C\prod_{j=1}^{d}c_{H_j}|\xi_j|^{1-2H_j}}{|\xi|^{2\alpha+\frac\alpha\beta(2\gamma+2H_0-2)}} \ud\xi.
  \end{equation*}
  \item The Gaussian random field $\hat U_t $ satisfies the strong local nondeterminism
  property: there exists a constant $C>0$ such that for all $n\in\N$ and $x,x^1,\dots,x^n\in\R^d$,
  \begin{equation}\label{e:sLND-hat-U}
    \Var\big(\hat U_t(x)|\hat U_t(x^1),\dots,\hat U_t(x^n)  \big)\ge C\min_{k=0,\dots,n} \left( \sum_{i=1}^d|x_i-x^{k}_i|^{\theta_2} \right)^2 ,
  \end{equation}
  where $x^0_i=0$, $i=1,\dots,d$.
  \item 
  There exist constants $c,C>0$ such that
        \begin{equation}\label{e:bound-hat-U-space}
          c|x-y|^{2\theta_2} \le \E\left[ |\hat U_t(x)-\hat U_t(y)|^2 \right]\le C|x-y|^{2\theta_2}
        \end{equation}
        for all $x,y\in\R^d$.
\end{enumerate}
\end{lemma}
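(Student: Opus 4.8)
The plan is to exploit the identity $\hat U_t(x)-\hat U_t(y)=U(t,x)-U(t,y)$, which reduces the entire analysis to the spatial increment of the field $U$ already studied in Lemma~\ref{le:sLND-U}. In particular, the second-moment computation culminating in \eqref{e:proof-spectral-hat-U} carries over verbatim, so almost no new calculation is required.

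For the first assertion I would return to the derivation of \eqref{e:proof-spectral-hat-U}: starting from \eqref{e:E-utx-usy} with $t=s$, the change of variables $\tau=|\xi|^{\alpha/\beta}\tilde\tau$ integrates out $\tau$ and produces the finite constant $c_{3,1}$ of \eqref{c31}, leaving
\[
\E\left[|\hat U_t(x)-\hat U_t(y)|^2\right]=2C_{H_0}c_{3,1}\int_{\R^d}\bigl[1-\cos(\langle x-y,\xi\rangle)\bigr]\frac{\prod_{j=1}^d c_{H_j}|\xi_j|^{1-2H_j}}{|\xi|^{2\alpha+\frac\alpha\beta(2\gamma+2H_0-2)}}\,\ud\xi,
\]
which depends only on $x-y$. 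Since $\hat U_t(0)=0$, this exhibits $\hat U_t$ as a mean-zero Gaussian field with stationary increments, and by Yaglom's representation \eqref{Eq:R} its spectral measure is exactly $F_{\hat U}(\ud\xi)$; the required integrability \eqref{Eq:F} is the finiteness already established through \eqref{e:proof-spectral-hat-U} and Lemma~\ref{le:space-integral} under $\theta_2<1$ and $\alpha>d-H$.

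For the second assertion I would read off the spectral density $f_{\hat U}(\xi)$ from $F_{\hat U}$ and verify the scaling identity. Using $\sum_{j=1}^d(1-2H_j)=d-2H$ together with the arithmetic $2\alpha+\frac\alpha\beta(2\gamma+2H_0-2)=2\theta_2+2d-2H$, a direct computation yields, for every $c>0$,
\[
f_{\hat U}(c^{1/\theta_2}\xi)=c^{\frac{d-2H}{\theta_2}-\frac{2\theta_2+2d-2H}{\theta_2}}f_{\hat U}(\xi)=c^{-(2+Q)}f_{\hat U}(\xi),\qquad Q=\frac{d}{\theta_2}.
\]
The SLND property \eqref{e:sLND-hat-U} then follows directly from Theorem~4.1 in \cite[Appendix A]{Herrell2020sharp}, exactly as in Step~2 of Lemma~\ref{le:sLND-U}.

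For the third assertion, the upper bound in \eqref{e:bound-hat-U-space} is precisely \eqref{e:up-bound-x}, since the spatial two-point increment of $\hat U_t$ coincides with that of $U$, while the lower bound is an immediate consequence of \eqref{e:sLND-hat-U} with $n=1$. I do not anticipate any genuine obstacle; the only point deserving care is confirming that integrating out $\tau$ indeed yields the correct Yaglom spectral measure for $\hat U_t$, that is, that the Fubini interchange and the $\tau$-integration are legitimate under the conditions $\theta_2<1$ and $\gamma<1-H_0$ of \eqref{e:hat-U-exist}. These are precisely the conditions guaranteeing the joint $(\tau,\xi)$-integrability already exploited in \eqref{c31} and \eqref{e:time-rgular-2-integral}, so the reduction is fully justified.
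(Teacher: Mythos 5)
Your proposal is correct and follows essentially the same route as the paper: both reduce everything to the spatial-increment identity \eqref{e:proof-spectral-hat-U} from the proof of Lemma \ref{le:sLND-U}, read off the spectral density and its scaling $f_{\hat U}(c^{1/\theta_2}\xi)=c^{-(2+d/\theta_2)}f_{\hat U}(\xi)$, invoke Theorem 4.1 of \cite{Herrell2020sharp} for SLND, and obtain \eqref{e:bound-hat-U-space} from \eqref{e:up-bound-x} together with the SLND lower bound (your ``$n=1$'' step should, strictly speaking, be combined with stationarity of increments and $\hat U_t(0)=0$ to dispose of the $k=0$ term in the minimum, but the paper is equally terse on this point).
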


\begin{proof}
 By the definition of $\hat U_t(x)$ and \eqref{e:proof-spectral-hat-U}, we have
 \begin{align}\label{e:indepence-t}
   &\E\left[\big|\hat U_t(x)-\hat U_t(y)\big|^2\right]=\E\left[ |U(t,x)-U(t,y)|^2 \right]\notag\\
   &=2C_{H_0}c_{3,1}\int_{\R^d} \frac{\big[ 1-\cos\big( \langle x-y,\xi\rangle \big) \big]\prod_{j=1}^{d}c_{H_j}|\xi_j|^{1-2H_j}}
   {|\xi|^{2\alpha+\frac\alpha\beta(2\gamma+2H_0-2)}} \ud\xi.
 \end{align}
Hence the Gaussian random field $\hat U_t $ has stationary increments with spectral density 
  \begin{equation*}
    f_{\hat U}(\xi)= \frac{C\prod_{j=1}^{d}c_{H_j}|\xi_j|^{1-2H_j}}{|\xi|^{2\alpha+\frac\alpha\beta(2\gamma+2H_0-2)}}.
  \end{equation*}
  This proves the first assertion. Moreover, for every constant $c > 0$,
  \begin{equation*}
  f_{\hat U}(c^\frac{1}{\theta_2}\xi)=c^{-(2+\frac{d}{\theta_2})}f_{\hat U}(\xi).
  \end{equation*}
  Thus, the SLND property follows directly from Theorem 4.1 in \cite[Appendix A]{Herrell2020sharp}.

 By \eqref{e:up-bound-x}, we have
 \begin{equation*}
   \E\left[\left|\hat U_t(x)-\hat U_t(y)\right|^2\right]=\E\left[ |U(t,x)-U(t,y)|^2 \right]\le C |x-y|^{2\theta_2}.
 \end{equation*}
 Since $\hat U_t(0)=0$, then $\E\left[ |\hat U_t(x)|^2 \right] \le C |x|^{2\theta_2}$ and $\hat U_t(x)$ is well defined for all $x\in\R^d$.
 The lower bound is a direct consequence of the SLND property. The proof is complete.
\end{proof}

From Lemma \ref{le:sLND-hat-U}, we prove the following regularity properties of the Gaussian random field 
$\hat U_t =\big\{\tilde U_t(x),x\in\R^d\big\}$.
 
\begin{proposition}\label{prop:modulus-space-U}
   Assume the conditions of Lemma \ref{le:sLND-hat-U} hold and recall that $\theta_2$ is given in \eqref{e:theta12}. For any
   fixed $t\ge0$, we have the following spatial regularity results:
  \begin{enumerate}
    \item (Uniform modulus of continuity). For any $M>0$, there exists a constant $k_4\in(0,\infty)$  such that
    \begin{equation*}
      \lim_{\e\to0^+}\sup_{\substack{x,y\in [-M,M]^d;\\ |x-y|\le\e}}\dfrac{|\hat U_t(x)-\hat U_t(y)|}{|x-y|^{\theta_2}\sqrt{\log\left(1+|x-y|^{-1}\right)}}=k_4,
      \text{ a.s. }
    \end{equation*}
    \item (Local modulus of continuity). There exists a constant $k_5\in(0,\infty)$ such that for any $x\in\R^d$,
    \begin{equation*}
      \lim_{\e\to0^+}\sup_{|y|\le \e} \dfrac{|\hat U_t(x+y)-\hat U_t(x)|}{|y|^{\theta_2} \sqrt{\log\log\left(1+|y|^{-1}\right)}}=k_5,
      \text{ a.s. }
    \end{equation*}
    \item (Chung's LIL). There exists a constant $k_6\in(0,\infty)$ such that for any $x\in\R^d$,
    \begin{equation*}
    \liminf_{\e\to0^+}\sup_{|y|\le \e} \dfrac{|\hat U_t(x+y)-\hat U_t(x)|}{\e^{\theta_2}\left(\log\log \e^{-1}\right)^{-\theta_2/d}}=k_6, \text{ a.s. }
  \end{equation*}
  \end{enumerate}
\end{proposition}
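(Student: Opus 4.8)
The plan is to obtain all three statements by feeding the structural properties of $\hat U_t$ established in Lemma~\ref{le:sLND-hat-U} into the general theory of strongly locally nondeterministic Gaussian random fields with stationary increments, exactly as was done for the space-time field in Proposition~\ref{prop:modulus-both-U}. The relevant inputs are already in hand: $\hat U_t$ has stationary increments, satisfies the SLND inequality \eqref{e:sLND-hat-U}, and obeys the two-sided bound \eqref{e:bound-hat-U-space}, which says that the canonical metric $d_{\hat U}(x,y):=\E\big[|\hat U_t(x)-\hat U_t(y)|^2\big]^{1/2}$ is comparable to $|x-y|^{\theta_2}$. Although the spectral density carries the anisotropic weight $\prod_j|\xi_j|^{1-2H_j}$, the scaling exponent is the same $\theta_2$ in every coordinate direction; since $\sum_{i=1}^d|a_i|^{\theta_2}$ is comparable to $|a|^{\theta_2}$ for $a\in\R^d$, the field is isotropic at the level of exponents, with index set of dimension $N=d$ and single H\"older exponent $\theta_2$.

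For parts (1) and (2) I would apply \cite[Theorem~4.1, Theorem~5.6]{Xiao2013Fernique}, whose hypotheses are precisely stationary increments, SLND, and a regularly varying canonical metric. Taking the gauge function to be $r\mapsto r^{\theta_2}$ and using \eqref{e:bound-hat-U-space} and \eqref{e:sLND-hat-U}, these theorems yield the exact uniform modulus of continuity with limiting constant $k_4$ and the exact local modulus of continuity with limiting constant $k_5$; the finiteness and strict positivity of $k_4,k_5$ are guaranteed by the matching upper and lower bounds in \eqref{e:bound-hat-U-space}. I would record these constants only as spectral functionals and not attempt to evaluate them.

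For the Chung-type law of the iterated logarithm in part (3) I would appeal to \cite[Theorem~1.1]{Xiao2010Chung}. The essential input is the small-ball estimate that follows from SLND together with the scaling $f_{\hat U}(c^{1/\theta_2}\xi)=c^{-(2+d/\theta_2)}f_{\hat U}(\xi)$ established in Lemma~\ref{le:sLND-hat-U}, namely that for fixed $\e$ and small $r$ the quantity $-\log\P\big(\sup_{|y|\le\e}|\hat U_t(x+y)-\hat U_t(x)|\le r\big)$ is of order $\e^{d}r^{-d/\theta_2}$; this is exactly what produces the normalization $\e^{\theta_2}(\log\log\e^{-1})^{-\theta_2/d}$ with a finite positive constant $k_6$. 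The exponent $\theta_2/d$ is $1/Q$ for the spatial index $Q=d/\theta_2$, the analogue of the combined index $Q=\tfrac{1}{\theta_1}+\tfrac{d}{\theta_2}$ in Proposition~\ref{prop:modulus-both-U}. Since the proof is in essence a verification that the hypotheses of these cited theorems hold in the present setting, the main obstacle is not a hard estimate but the bookkeeping confirming that the anisotropic spectral weight does not disturb the single-exponent scaling; this is already secured by the comparabilities recorded in \eqref{e:sLND-hat-U} and \eqref{e:bound-hat-U-space}.
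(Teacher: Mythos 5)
Your proposal is correct and takes essentially the same route as the paper: the paper's proof likewise consists of feeding the stationary increments, the SLND property \eqref{e:sLND-hat-U}, and the two-sided bound \eqref{e:bound-hat-U-space} from Lemma \ref{le:sLND-hat-U} into \cite[Theorem 4.1, Theorem 5.6]{Xiao2013Fernique} for the uniform and local moduli of continuity and into \cite[Theorem 1.1]{Xiao2010Chung} for Chung's LIL. Your extra bookkeeping (single-exponent comparability across coordinates, the spatial index $Q=d/\theta_2$, and the resulting small-ball scaling behind the normalization $\e^{\theta_2}(\log\log\e^{-1})^{-\theta_2/d}$) is accurate and simply makes explicit what the paper's one-line proof leaves implicit.
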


\begin{proof}
  Thanks to Lemma \ref{le:sLND-hat-U}, by using \cite[Theorem 4.1, Theorem 5.6]{Xiao2013Fernique} for modulus of continuity and
   \cite[Theorem 1.1]{Xiao2010Chung} for Chung's LIL,  we can derive the above regularity results for $\hat U_t(x)$. 
   This finishes the proof.
\end{proof}

The following proposition shows that, as a function of the space variable $x$,  $\hat V_t(x)$ is more regular than $ \hat U_t(x)$.
\begin{proposition}\label{prop:reg-hat-V}
Assume the conditions of Lemma \ref{le:sLND-hat-U} hold. Then the 
Gaussian random field $\{\hat V_t(x), x\in\R^d\} $ is well-defined and there exists a finite constant $C > 0$ such that 
for all $t >0$ and  $x,y\in\R^d$ with $|x-y|\le e^{-1}$,   
  \begin{equation*}
    \begin{split}
       &\E\left[ \big|\hat V_t(x)-\hat V_t(y)\big|^2 \right] \\
         &\qquad\le
    \begin{cases}
      C\:t^{2H_0+2\gamma-2}\times|x-y|^{2\alpha-2d+2H}, & \mbox{if } 0<\alpha-d+H<1, \\
       C\:t^{2H_0+2\gamma-2}\times|x-y|^2 \log\left(|x-y|^{-1}\right), & \mbox{if } \alpha-d+H=1, \\
      C\:t^{2\beta+2\gamma+2H_0-2-\frac{\beta(2d-2H+2)}{\alpha}}\times|x-y|^2, & \mbox{if } \alpha-d+H>1.
    \end{cases}
    \end{split}
  \end{equation*}
Consequently, the function $x\mapsto \hat V_t(x)$ is smoother than $x\mapsto \hat U_t(x)$ on $\R^d$.
\end{proposition}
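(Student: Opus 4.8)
The plan is to reduce the statement to the spatial increment bound for $V(t,x)$ already established in Proposition \ref{prop:reg-V}(2). The crucial observation is that the subtracted constant $V(t,0)$ in the definition \eqref{e:de-hat-V} cancels in spatial increments: for any $x,y\in\R^d$,
\[
\hat V_t(x)-\hat V_t(y)=\big[V(t,x)-V(t,0)\big]-\big[V(t,y)-V(t,0)\big]=V(t,x)-V(t,y).
\]
Hence $\E\big[|\hat V_t(x)-\hat V_t(y)|^2\big]=\E\big[|V(t,x)-V(t,y)|^2\big]$, and the desired three-case estimate is exactly \eqref{e:upb-vtxvty}. No new computation is required beyond what was already done for $V(t,x)$.

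First I would record the well-definedness of $\{\hat V_t(x),x\in\R^d\}$. Since $\hat V_t(x)=V(t,x)-V(t,0)$ and both $V(t,x)$ and $V(t,0)$ have finite second moments under \eqref{e:DL} and \eqref{e:hat-U-exist} by Proposition \ref{prop:reg-V}, we obtain $\E[|\hat V_t(x)|^2]\le 2\E[|V(t,x)|^2]+2\E[|V(t,0)|^2]<\infty$, so $\hat V_t$ is a well-defined centered Gaussian field. Combining the identity above with \eqref{e:upb-vtxvty} then yields the stated bound verbatim in each of the three cases $0<\alpha-d+H<1$, $\alpha-d+H=1$, and $\alpha-d+H>1$.

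It remains to justify that $x\mapsto\hat V_t(x)$ is smoother than $x\mapsto\hat U_t(x)$, which amounts to comparing increment exponents. By Lemma \ref{le:sLND-hat-U}(3) we have $\E[|\hat U_t(x)-\hat U_t(y)|^2]\asymp|x-y|^{2\theta_2}$ with $\theta_2<1$. In the regime $0<\alpha-d+H<1$ the $\hat V_t$ exponent is $2(\alpha-d+H)$; the condition $\gamma<1-H_0$ in \eqref{e:hat-U-exist} forces $\frac{\alpha}{\beta}(\gamma+H_0-1)<0$, so by \eqref{e:theta12} we have $\theta_2<\alpha-d+H$ and thus $2\theta_2<2(\alpha-d+H)$. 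In the regimes $\alpha-d+H\ge1$ the $\hat V_t$ exponent is $2$ (up to a logarithmic factor), which strictly exceeds $2\theta_2<2$. In every case the spatial increments of $\hat V_t$ decay strictly faster than those of $\hat U_t$, establishing the smoothness claim. Since the substantive analysis was already carried out in Proposition \ref{prop:reg-V}, there is no genuine obstacle here; the only point demanding attention is this exponent comparison, which rests entirely on the sign condition $\gamma<1-H_0$.
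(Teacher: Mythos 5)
Your reduction of the increment bound is the same as the paper's: spatial increments of $\hat V_t$ coincide with those of $V(t,\cdot)$, and the three-case estimate is exactly \eqref{e:upb-vtxvty}, whose proof only uses $\gamma<1-H_0$ (cases $\alpha-d+H\le 1$) and $\theta_2<1$ (case $\alpha-d+H>1$), i.e.\ precisely the conditions \eqref{e:hat-U-exist}; your closing exponent comparison is also correct. The genuine gap is in your well-definedness step. You assert that $V(t,x)$ and $V(t,0)$ have finite second moments ``under \eqref{e:DL} and \eqref{e:hat-U-exist} by Proposition \ref{prop:reg-V}.'' But Proposition \ref{prop:reg-V} is proved under the conditions of Lemma \ref{le:sLND-U}, i.e.\ \eqref{e:DL} together with \emph{all three} conditions in \eqref{e:U-exist}, including $\theta_1<1$. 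The present proposition assumes only \eqref{e:hat-U-exist}, which places no restriction on $\theta_1$; since $\theta_1=\frac{\beta}{\alpha}\theta_2$, one can have $\theta_2<1$ but $\theta_1\ge 1$ whenever $\beta>\alpha$. For instance, $\alpha=0.9$, $\beta=1.8$, $d=1$, $H_1=0.9$, $H_0=1/2$, $\gamma=0$ satisfies \eqref{e:DL} and \eqref{e:hat-U-exist} but gives $\theta_1=1.1$. In that regime the fact you import is actually false: by \eqref{e:proof-spectral-tilde-U}, $\E\left[|U(t,x)-U(0,x)|^2\right]$ carries the factor $\int_\R\frac{1-\cos\tau}{|\tau|^{2\theta_1+1}}\ud\tau$, which diverges for $\theta_1\ge1$, so $U(t,x)$ has infinite variance for $t\neq0$; since $\E\left[|u(t,x)|^2\right]<\infty$ under \eqref{e:DL} (Theorem \ref{th:ex-solution}), the kernel of $V(t,x)=U(t,x)-u(t,x)$ then has infinite $\cH$-norm as well. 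Thus $V(t,x)$ itself need not be a well-defined random variable under the stated hypotheses, and your inequality $\E[|\hat V_t(x)|^2]\le 2\E[|V(t,x)|^2]+2\E[|V(t,0)|^2]$ is a bound by $+\infty$.

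The repair is the paper's route, which never uses $V(t,x)$ as a standalone object: by \eqref{e:de-hat-u}, $\hat V_t(x)=\hat U_t(x)+u(t,0)-u(t,x)$, and each of the three terms on the right has finite second moment under exactly the assumed conditions --- $\hat U_t(x)$ by Lemma \ref{le:sLND-hat-U} (which needs only \eqref{e:DL} and \eqref{e:hat-U-exist}), and $u(t,0)$, $u(t,x)$ by Theorem \ref{th:ex-solution} (which needs only \eqref{e:DL}). With well-definedness secured this way, the rest of your argument goes through, provided the identity $\hat V_t(x)-\hat V_t(y)=V(t,x)-V(t,y)$ and the appeal to \eqref{e:upb-vtxvty} are read at the level of kernels: what is being estimated is the $\cH$-norm of $p(t-r,x-\cdot)-p(t-r,y-\cdot)$ on $r<0$, which is exactly the quantity bounded in the proof of Proposition \ref{prop:reg-V}(2) and requires none of the problematic objects to exist individually.
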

\begin{proof}
  By \eqref{e:de-hat-u}, we have
  \begin{align*}
    &\E\left[\big|\hat V_t(x)\big|^2\right]=\E\left[\big|\hat{U}_t(x)+u(t,0)-u(t,x)\big|^2\right]\\
    &\le \E\left[\big|\hat{U}_t(x)\big|^2\right]+\E\left[\left|u(t,0)\right|^2\right]+\E\left[\left|u(t,x)\right|^2\right]<\infty,
  \end{align*}
  under conditions \eqref{e:DL} and \eqref{e:hat-U-exist}, due to Theorem \ref{th:ex-solution} and Lemma \ref{le:sLND-hat-U}.
  Then, $\hat{V}_t(x)$ is well-defined.
  By \eqref{e:de-hat-V}, we obtain that
  \begin{equation*}
    \E\left[ \big|\hat V_t(x)-\hat V_t(y)\big|^2 \right]=\E\left[ \left| V(t,x)-V(t,y)\right|^2 \right],
  \end{equation*}
  and then the desired result follows from \eqref{e:upb-vtxvty}. This completes the proof.
\end{proof}

Now, we state the main result in this subsection.
\begin{theorem}[Spatial regularity]\label{th:modulus-space}
   Assume the conditions of Lemma \ref{le:sLND-hat-U}  hold and recall that $\theta_2$ is given in \eqref{e:theta12}. For any
    fixed $t>0$, we have the following spatial regularity results:
  \begin{enumerate}
    \item (Uniform modulus of continuity). For any $M>0$, there exists a constant $k_4\in(0,\infty)$ such that
    \begin{equation*}
      \lim_{\e\to0^+}\sup_{\substack{x,y\in [-M,M]^d;\\ |x-y|\le\e}}\dfrac{|u(t,x)-u(t,y)|}{|x-y|^{\theta_2}\sqrt{\log\left(1+|x-y|^{-1}\right)}}=k_4,
      \text{ a.s. }
    \end{equation*}
    \item (Local modulus of continuity). There exists a constant $k_5\in(0,\infty)$ such that for any $x\in\R^d$,
    \begin{equation*}
      \lim_{\e\to0^+}\sup_{|y|\le \e} \dfrac{|u(t,x+y)-u(t,x)|}{|y|^{\theta_2} \sqrt{\log\log\left(1+|y|^{-1}\right)}}=k_5,
      \text{ a.s. }
    \end{equation*}
    \item (Chung's LIL). There exists a constant $k_6\in(0,\infty)$ such that for any $x\in\R^d$,
    \begin{equation*}
    \liminf_{\e\to0^+}\sup_{|y|\le \e} \dfrac{|u(t,x+y)-u(t,x)|}{\e^{\theta_2}\left(\log\log \e^{-1}\right)^{-\theta_2/d}}=k_6, \text{ a.s. }
  \end{equation*}
  \end{enumerate}
\end{theorem}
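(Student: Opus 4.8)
The plan is to mirror the proofs of Theorem \ref{th:modulus-both} and Theorem \ref{th:modulus-time}: reduce the spatial regularity of $u(t,\cdot)$ to that of the increment-stationary Gaussian random field $\hat U_t$, whose regularity is already pinned down in Proposition \ref{prop:modulus-space-U}, and then show that the smoother term $\hat V_t$ makes no contribution to the limiting moduli.

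First I would record that, since the common centering $u(t,0)$ cancels in any spatial increment,
\begin{equation*}
u(t,x)-u(t,y)=\hat u_t(x)-\hat u_t(y)=\big[\hat U_t(x)-\hat U_t(y)\big]-\big[\hat V_t(x)-\hat V_t(y)\big],
\end{equation*}
by \eqref{e:de-hat-u}. Hence all three quantities appearing in the theorem (which are written through increments $u(t,x+y)-u(t,x)$ or $u(t,x)-u(t,y)$) may be analyzed via the right-hand decomposition.

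Next I would use Proposition \ref{prop:reg-hat-V} to show that $\hat V_t$ is negligible. In each of its three regimes the second moment of the spatial increment of $\hat V_t$ is at most $|x-y|^{2\eta}$ (up to a logarithmic factor in the borderline case $\alpha-d+H=1$) with $\eta>\theta_2$: when $0<\alpha-d+H<1$ one has $\eta=\alpha-d+H$, and the hypothesis $\gamma<1-H_0$ in \eqref{e:hat-U-exist} forces $\frac{\alpha}{\beta}(\gamma+H_0-1)<0$, whence $\theta_2=\alpha-d+H+\frac{\alpha}{\beta}(\gamma+H_0-1)<\alpha-d+H$; in the two remaining regimes $\eta=1>\theta_2$ follows directly from $\theta_2<1$. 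Since $\hat V_t$ is Gaussian, all its moments are controlled by the $L^2$ norm, so Kolmogorov's continuity theorem yields a modification of $\hat V_t$ that is a.s. H\"older continuous of some order $\eta'>\theta_2$. Hence for $|y|\le\e$ one has $|\hat V_t(x+y)-\hat V_t(x)|\le C|y|^{\eta'}$, and dividing by any of the three normalizations ($|x-y|^{\theta_2}\sqrt{\log(1+|x-y|^{-1})}$, its local analogue, or $\e^{\theta_2}(\log\log\e^{-1})^{-\theta_2/d}$) produces a quotient tending to $0$ as $\e\to0^+$, because $\eta'>\theta_2$.

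Finally, with $\hat V_t$ contributing $o(1)$ to each normalized quotient, a triangle-inequality sandwich shows that the $\limsup$ (for the two moduli) and the $\liminf$ (for Chung's LIL) of the quotients built from $\hat u_t$ coincide with those built from $\hat U_t$, so Proposition \ref{prop:modulus-space-U} delivers the three assertions with the same constants $k_4,k_5,k_6$. I expect the only genuine (and mild) obstacle to be this sandwich step, namely verifying that adding a strictly smoother Gaussian field leaves each limiting modulus unchanged; this is the same device already used implicitly in the proofs of Theorems \ref{th:modulus-both} and \ref{th:modulus-time}.
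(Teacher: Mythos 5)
Your proposal is correct and takes essentially the same route as the paper's proof: decompose $u(t,x)-u(t,y)=\bigl[\hat U_t(x)-\hat U_t(y)\bigr]-\bigl[\hat V_t(x)-\hat V_t(y)\bigr]$ via \eqref{e:de-hat-u}, use Proposition \ref{prop:reg-hat-V} to conclude that $\hat V_t$ is strictly smoother than $\hat U_t$, and transfer the three limits of Proposition \ref{prop:modulus-space-U} from $\hat U_t$ to $u$. You actually supply more detail than the paper, which compresses the negligibility of $\hat V_t$ (your H\"older-exponent comparison $\eta'>\theta_2$ via Kolmogorov's theorem, plus the triangle-inequality sandwich) into the single assertion that $\hat V_t$ is smoother than $\hat U_t$.
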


\begin{proof}
  Since  $\hat V_t(x)$  is smoother than $\hat U_t(x)$ in space   by  Proposition  \ref{prop:reg-hat-V}.  The regularity properties of
  $\hat u_t(x)=\hat U_t(x)-\hat V_t(x)$ on $\R^d$ are determined  by the increment-stationary field $\hat U_t(x)$ (see Proposition
  \ref{prop:modulus-space-U}). By the definition of $\hat u_t(x)$, we have
   \begin{equation*}
     \hat u_t(x)-\hat u_t(y)=u(t,x)-u(t,y).
   \end{equation*}
   Thus, we have the above regularity results for $u(t,x)$ which are the same as $\hat u_t(x)$ and $\hat U_t(x)$. The proof is complete.
\end{proof}

\begin{remark}[A comparison of Theorem \ref{th:modulus-space} with the known results]\label{re:comp-space}\hfill
 \begin{enumerate}
   \item For the fractional heat equation (i.e., $\beta=1$ and $\gamma=0$), the uniform and local moduli of continuity in space
   are consistent with \cite[Corollary 3.9]{Herrell2020sharp}.
   \item If $\alpha=2$, $\gamma=1-\beta$ and the noise is space-time white noise (i.e., $H_0=H_1=\cdots=H_d=\frac{1}{2}, \,
   d=1,2,3$),  condition \eqref{e:U-exist} implies $\gamma=1-\beta<1-H_0\le\frac{1}{2}$ and hence $\beta>\frac{1}{2}$.   Thus,
   the spatial regularity results presented in Theorem~\ref{th:modulus-space} complement  Theorem 1.5 and Theorem 1.6 in
   \cite{Xiao2017JDE} where the case $\beta\le1/2$ is studied.
   \end{enumerate}
\end{remark}

%
%

\section{Small ball probability}\label{se:smallball}

In this section, we study the small ball probabilities for the solution $u(t,x)$ of \eqref{e:fde} as stated in Theorem \ref{th:small-ball}.
In order to prove small ball probabilities for $u(t,x)=U(t,x)-V(t,x)$, we first study small ball probabilities for $U(t,x)$ and $V(t,x)$,
respectively. Using \cite[Theorem 5.1]{Xiao2009Sample}, we can get the following results for $\{U(t,x), (t, x)  \in \R\times \R^d\}$.

\begin{proposition}\label{prop:small-ball-U}
Let $\{U(t,x), (t, x)  \in \R\times \R^d\}$ be the Gaussian random field defined in \eqref{e:de-U} and recall that $\theta_1$ and $\theta_2$
are given in \eqref{e:theta12}.
  \begin{enumerate}[(1)]
    \item (Small ball probability in time).
    Assume the conditions \eqref{e:DL} and \eqref{e:tilde-U-exist} hold. Then, we have
    \begin{equation*}
       \lim_{\e\to0}\e^{1/\theta_1}\log\P\left\{\max_{t\in[0,1]}|U(t,x)-U(0,x)|\le\e\right\}= -C_1^{1/2\theta_1}\kappa ,
    \end{equation*}
    where $C_1$ is defined in \eqref{e:de-C1} and $\kappa$ is the small ball constant of a fractional Brownian motion with index
    $\theta_1$ (e.g. \cite[Theorem 6.9]{Li2001Gaussian}).
    \item (Small ball probability in space). Assume the conditions \eqref{e:DL} and \eqref{e:hat-U-exist} hold. There exist positive
    constants $c$ and $C$ independent of $t$ such that for all $t > 0$ and $\e>0$,
    \begin{equation*}
      \exp\left(-\frac{c}{\e^{d/\theta_2}}\right)\le \P\left\{\max_{x\in[-1,1]^d}|U(t,x)-U(t,0)|\le\e\right\}\le \exp\left(-\frac{C}{\e^{d/\theta_2}}\right).
    \end{equation*}
    \item (Space-time joint small ball probability). Assume the condition \eqref{e:DL} and \eqref{e:U-exist} hold. There exist
    positive constants $c$ and $C$ such that for $\e>0$,
    \begin{equation*}
      \exp\left(-\frac{c}{\e^{Q}}\right)\le \P\left\{\max_{t\in[0,1], x\in[-1,1]^d}|U(t,x)|\le\e\right\}\le \exp\left(-\frac{C}{\e^Q}\right),
    \end{equation*}
    where $Q=\frac{1}{\theta_1}+\frac{d}{\theta_2}$.
  \end{enumerate}
\end{proposition}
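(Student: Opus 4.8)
The plan is to handle the three parts separately: the exact constant in part~(1) comes from the fractional Brownian motion structure already established, while the two-sided bounds in parts~(2) and~(3) follow from the general small-ball theorem \cite[Theorem 5.1]{Xiao2009Sample} applied to Gaussian fields enjoying strong local nondeterminism. In each case the hypotheses of the proposition are precisely those of the corresponding SLND lemma, so no new structural work on $U$ is needed.

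For part~(1), I would invoke Lemma~\ref{le:sLND-tilde-U}(3), which identifies $\{\tilde U_x(t)/\sqrt{C_1},\,t\ge 0\}$, with $\tilde U_x(t):=U(t,x)-U(0,x)$, as a standard fractional Brownian motion of Hurst index $\theta_1$. The classical small-ball estimate for such a motion under the sup-norm (see \cite[Theorem 6.9]{Li2001Gaussian}) gives
\begin{equation*}
  \lim_{\delta\to 0}\delta^{1/\theta_1}\log\P\Big\{\max_{t\in[0,1]}|B^{\theta_1}(t)|\le\delta\Big\}=-\kappa.
\end{equation*}
Writing $\tilde U_x=\sqrt{C_1}\,B^{\theta_1}$ and substituting $\delta=\e/\sqrt{C_1}$, so that $\e^{1/\theta_1}=C_1^{1/(2\theta_1)}\delta^{1/\theta_1}$, the scaling factor is absorbed through the exponent $1/\theta_1$ and I obtain $\lim_{\e\to0}\e^{1/\theta_1}\log\P\{\max_t|\tilde U_x(t)|\le\e\}=-C_1^{1/(2\theta_1)}\kappa$, which is the claim. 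The only delicate point is tracking $\sqrt{C_1}$ correctly through the power $1/\theta_1$.

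For parts~(2) and~(3) the strategy is uniform: verify the hypotheses of \cite[Theorem 5.1]{Xiao2009Sample} and read off the exponent. In part~(2), Lemma~\ref{le:sLND-hat-U} supplies the SLND property \eqref{e:sLND-hat-U} and the two-sided variance estimate \eqref{e:bound-hat-U-space} for $\hat U_t(x):=U(t,x)-U(t,0)$; the governing anisotropic metric is $\sum_i|x_i|^{\theta_2}$, which is equivalent to $|x|^{\theta_2}$ up to constants depending only on $d$ and $\theta_2$, so the cited theorem produces the small-ball exponent $d/\theta_2$. Crucially, the spectral measure $F_{\hat U}$ in Lemma~\ref{le:sLND-hat-U} is independent of $t$, so $\{\hat U_t(x),\,x\in\R^d\}$ has the same law for every $t>0$; this is exactly what makes the constants $c,C$ independent of $t$. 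In part~(3), Lemma~\ref{le:sLND-U} provides the SLND property \eqref{e:sLND} together with the two-sided bound \eqref{e:bounds-U} for the full field, relative to the space-time metric $\rho(t,x)=|t|^{\theta_1}+|x|^{\theta_2}$; here \cite[Theorem 5.1]{Xiao2009Sample} yields the exponent $Q=1/\theta_1+d/\theta_2$ and the matching two-sided exponential bounds.

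The hard part will be confirming that the precise structural hypotheses of \cite[Theorem 5.1]{Xiao2009Sample} are genuinely met in the anisotropic regime: one must check that the SLND constant and both the upper and lower variance bounds hold uniformly over the relevant compact domains, and that the metric appearing in \eqref{e:sLND-hat-U} and \eqref{e:sLND} matches the quasi-metric demanded by that theorem. Establishing the equivalence of $\sum_i|x_i|^{\theta_2}$ with $|x|^{\theta_2}$, and the analogous equivalence for $\rho$, is routine but must be stated to align the exponents. By contrast, the sharp constant in part~(1) reduces entirely to fractional Brownian motion scaling, so the genuine difficulty is concentrated in the clean invocation of the general theorem in parts~(2) and~(3).
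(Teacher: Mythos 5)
Your proposal is correct and follows essentially the same route as the paper: part (1) via the fractional Brownian motion identification in Lemma~\ref{le:sLND-tilde-U}(3) plus the Li--Shao small-ball theorem with the same $\sqrt{C_1}$ scaling, and parts (2)--(3) via the SLND properties \eqref{e:sLND-hat-U}, \eqref{e:sLND}, the two-sided variance bounds \eqref{e:bound-hat-U-space}, \eqref{e:bounds-U}, the norm equivalence between $\sum_i|x_i|^{\theta_2}$ and $|x|^{\theta_2}$, and \cite[Theorem 5.1]{Xiao2009Sample}. Your observation that the $t$-independence of the constants in part (2) comes from the $t$-independence of the law of $\hat U_t$ is exactly the paper's argument (stated there via the fact that \eqref{e:indepence-t} does not involve $t$).
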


\begin{proof}
  (1) Since $\left\{\frac{\tilde{U}_x(t)}{\sqrt{C_1}},t\ge0\right\}=\left\{\frac{U(t,x)-U(0,x)}{\sqrt{C_1}},t\ge0\right\}$ is a fractional
  Brownian motion by Lemma \ref{le:sLND-tilde-U}, the small ball probability in time can be deduced from \cite[Theorem 6.9]{Li2001Gaussian}.

  (2) Next, we prove the small ball probability in space. By the norm equivalence, i.e.,
  \begin{equation*}
   c \sum_{i=1}^{d}|x_i-x^k_i|^{2\theta_2}\le |x-y|^{2\theta_2}\le C\sum_{i=1}^{d}|x_i-x^k_i|^{2\theta_2},
  \end{equation*}
  and \eqref{e:bound-hat-U-space}, we have
  \begin{equation}\label{e:con-sb-space-c1}
   c \sum_{i=1}^{d}|x_i-x^k_i|^{2\theta_2}\le \E\left[ \left|\hat{U}_t(x)-\hat{U}_t(y)\right|^2 \right]\le C\sum_{i=1}^{d}|x_i-x^k_i|^{2\theta_2}.
  \end{equation}
 Now, the small ball probability in space is a direct consequence of \eqref{e:con-sb-space-c1}, \eqref{e:sLND-hat-U}  and
 \cite[Theorem 5.1]{Xiao2009Sample}. Since $U(t,x)-U(t,0)=\hat{U}_t(x)-\hat{U}_t(0)$ and the fact that \eqref{e:indepence-t}
 does not contain $t$, the distribution of $U(t,x)-U(t,0)$ is independent of $t$.

 (3) Finally, using \eqref{e:bounds-U} and the norm equivalence, we can obtain that
  \begin{equation}\label{e:con-sb-both-c1}
     c\bigg(|t-s|^{2\theta_1}+\sum_{i=1}^{d}|x_i-x^k_i|^{2\theta_2}\bigg) \le \E\left[ |U(t,x)-U(s,y)|^2 \right]\le C\bigg(|t-s|^{2\theta_1}
     +\sum_{i=1}^{d}|x_i-x^k_i|^{2\theta_2}\bigg),
  \end{equation}
and  the space-time joint small ball probability follows from  \eqref{e:sLND}, \eqref{e:con-sb-both-c1}  and \cite[Theorem~5.1]{Xiao2009Sample}.
This finishes the proof.
\end{proof}

Next, we study the small ball probability of $V(t,x)$.

\begin{proposition}\label{prop:V-sb}
Recall that $\{V(t,x), t \ge 0, x \in \R^d\}$ and $\theta_2$ are defined in \eqref{e:de-V} and \eqref{e:theta12}, respectively.

\begin{enumerate}[(1)]
 \item (Small ball probability in time). Assume the conditions \eqref{e:DL} and \eqref{e:tilde-U-exist} hold. Assume additionally either $\beta+\gamma-2\le0$
 or \begin{equation*}
      \begin{cases}
        \beta+\gamma-2>0, \\
        -H_0+\frac{\beta}{\alpha}(d-H)<1.
      \end{cases}
    \end{equation*}
  Then, there exists a constant $C>0$ such that for every $x \in \R^d$,
  \begin{equation*}
    \mathbb{P}\bigg\{\sup_{t\in[0,1]}|V(t,x)-V(0,x)|\le\e \bigg\}\ge \exp\left(-\frac{C}{\e}\right) \text{ for all }\e>0.
  \end{equation*}
  \item (Small ball probability in space). Assume the conditions \eqref{e:DL} and \eqref{e:hat-U-exist} hold. For $\lambda=(\alpha-d+H)\wedge1$,
  there exists a constant $C>0$ depending on $t>0$ such that
  \begin{equation*}
    \mathbb{P}\bigg\{\sup_{x\in[-1,1]^d}|V(t,x)-V(t,0)|\le\e \bigg\}\ge \exp\left(-\frac{C}{\e^{d/\lambda}}\right) \ \text{ for all }\e>0.
  \end{equation*}
  \item  (Space-time joint small ball probability). Assume the condition \eqref{e:DL} and \eqref{e:U-exist}. Assume additionally either $\beta+\gamma-2\le0$ or
  \begin{equation*}
      \begin{cases}
        \beta+\gamma-2>0, \\
        -H_0+\frac{\beta}{\alpha}(d-H)<1.
      \end{cases}
    \end{equation*}
    For $\lambda=(\alpha-d+H)\wedge1$, there exists a constant $C>0$ such that
  \begin{equation*}
    \mathbb{P}\bigg\{\sup_{(t,x)\in[0,1]\times[-1,1]^d}|V(t,x)|\le\e \bigg\}\ge \exp\left(-\frac{C}{\e^{1+d/\lambda}}\right)\ \text{ for all }\e>0.
  \end{equation*}
\end{enumerate}
\end{proposition}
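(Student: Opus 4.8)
The plan is to reduce each of the three lower bounds to an upper bound on the metric entropy of the relevant index set under the canonical $L^2$-metric of $V$, and then to invoke the standard fact that a polynomial upper bound on the covering number of a centered Gaussian field forces a matching stretched-exponential lower bound on its small ball probability: if $\{X(u),u\in T\}$ is centered Gaussian, $T\subset\R^N$, and the covering number $N(T,d_X,\e)$ of $T$ under $d_X(u,v)=\|X(u)-X(v)\|_{L^2}$ satisfies $N(T,d_X,\e)\le C\e^{-\theta}$ for all small $\e$, then $\P\{\sup_{u\in T}|X(u)|\le\e\}\ge\exp(-C'\e^{-\theta})$. The crucial point is that this direction uses \emph{only} an upper bound on the canonical metric, i.e.\ the modulus of continuity of $V$ already recorded in Proposition~\ref{prop:reg-V}; it requires no lower bound and no strong local nondeterminism. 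This is exactly what makes it applicable to the smooth term $V$, for which no SLND was proved (in contrast, the estimates for $U$ rely on SLND through \cite[Theorem 5.1]{Xiao2009Sample}).

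For the spatial bound in part~(2), I would fix $t>0$ and work on $T=[-1,1]^d$ with $d_V(x,y)=\|V(t,x)-V(t,y)\|_{L^2}$. Proposition~\ref{prop:reg-V}(2), namely \eqref{e:upb-vtxvty}, gives $d_V(x,y)\le C(t)\,\psi(|x-y|)$ with $\psi(r)=r^{\alpha-d+H}$ when $0<\alpha-d+H<1$ and $\psi(r)=r$ when $\alpha-d+H>1$; writing $\lambda=(\alpha-d+H)\wedge 1$, a $d_V$-ball of radius $\e$ contains a Euclidean ball of radius $\asymp\e^{1/\lambda}$, so $N([-1,1]^d,d_V,\e)\le C\e^{-d/\lambda}$ and the general lemma yields the claimed $\exp(-C\e^{-d/\lambda})$. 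In the borderline case $\alpha-d+H=1$ the bound carries an extra factor $\sqrt{\log(1/|x-y|)}$, which I would absorb into the entropy estimate since it perturbs the exponent only by a logarithm, so the same polynomial rate survives.

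For the temporal bound in part~(1), fix $x$ and take $T=[0,1]$. The essential difficulty is that the increment bound \eqref{e:d1-pre} is \emph{not} uniformly H\"older: the effective H\"older constant of $t\mapsto V(t,x)$ degenerates as the base point $s\to 0^+$ (through the prefactors $s^{-(1-\theta_1)}$, resp.\ $t^{\beta+\gamma-2}s^{H_0-\frac{\beta}{\alpha}(d-H)}$). Thus a uniform-modulus lemma cannot be applied directly, and instead I would build an explicit $\e$-net $0=t_0<t_1<\cdots<t_M=1$ with $d_V(t_i,t_{i+1})\le\e$; by \eqref{e:d1-pre} both cases force a step of order $\e\,t_i^{1-\theta_1}$, so the number of interior points is controlled by $\e^{-1}\int_0^1 t^{\theta_1-1}\,dt=\e^{-1}/\theta_1<\infty$ (finite since $\theta_1\in(0,1)$), while the first point is handled by the chaining estimate $\|V(t_1,x)-V(0,x)\|_{L^2}\lesssim t_1^{\theta_1}$, so one extra point suffices. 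The additional hypotheses ($\beta+\gamma-2\le0$, or $\beta+\gamma-2>0$ together with $-H_0+\frac{\beta}{\alpha}(d-H)<1$) are precisely what keep the constants in \eqref{e:d1-pre} and the associated counting integrals under control, giving $N([0,1],d_V,\e)\le C\e^{-1}$ and hence $\exp(-C\e^{-1})$.

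The space-time bound in part~(3) combines the two analyses, and I expect this to be the main obstacle, because on $T=[0,1]\times[-1,1]^d$ the canonical metric degenerates near $t=0$ \emph{simultaneously} in both directions: the spatial modulus constant in \eqref{e:upb-vtxvty} carries a negative power of $t$ (e.g.\ $t^{H_0+\gamma-1}$, negative since $\gamma<1-H_0$), so the field grows rougher in space as $t\downarrow 0$, while the temporal modulus degenerates as in part~(1). The clean way to organize the count is a dyadic-in-time decomposition: split $[0,1]$ into slabs $[2^{-j-1},2^{-j}]\times[-1,1]^d$ on which $t\asymp 2^{-j}$ is bounded away from $0$, so that the prefactors in \eqref{e:d1-pre} and \eqref{e:upb-vtxvty} are essentially fixed on each slab; cover each slab by roughly $\bigl(2^{-j}/\text{step}_{\mathrm{time}}\bigr)\times\bigl(\e^{-1/\lambda}\,2^{jc_{\mathrm{space}}}\bigr)^{d}$ balls; and sum over $j$. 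Under the same hypotheses as in part~(1) the resulting geometric series in $j$ converges and is dominated by its finest scale, producing $N(T,d_V,\e)\le C\e^{-(1+d/\lambda)}$ and therefore $\exp(-C\e^{-(1+d/\lambda)})$. Verifying that the per-slab exponents assemble, under precisely these conditions, into a scale-balanced and convergent sum is the technical heart of the argument.
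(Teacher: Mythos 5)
Your treatment of parts (1) and (2) is, in substance, the paper's own proof: the ``standard fact'' you invoke is Lemma~\ref{le:talagrand}, and your nets coincide with the paper's, whose recursion $a_{n+1}=a_n+c\,a_n^{\tilde\lambda}$ (analyzed via Lemma~\ref{le:an-asym}) is a discrete form of your counting integral. Two slips are worth flagging. First, in the case $\beta+\gamma-2>0$ the admissible step at $t_i$ has order $\e\,t_i^{\tilde\lambda}$ with $\tilde\lambda=-H_0+\frac{\beta}{\alpha}(d-H)$, not $\e\,t_i^{1-\theta_1}$; the additional hypothesis of part (1) is precisely the statement $\tilde\lambda<1$ that makes $\int_0^1 t^{-\tilde\lambda}\,\ud t$ finite. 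Second, in the borderline case $\alpha-d+H=1$ the logarithm in \eqref{e:upb-vtxvty} cannot simply be absorbed: it yields a covering number of order $\e^{-d}(\log(1/\e))^{d/2}$ and hence only $\exp\big(-C\e^{-d}(\log(1/\e))^{d/2}\big)$, weaker than the claimed rate (a defect shared by the paper, which asserts the bound $\E[|V(t,x)-V(t,y)|^2]^{1/2}\le c|x-y|^{\lambda}$ outright).

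The genuine gap is in part (3), and your dyadic scheme cannot close it, because the entropy bound you are after --- and indeed the stated probability bound itself --- fails in part of the admissible parameter range. The obstruction sits exactly at $t=0$: since $u(0,\cdot)\equiv0$, one has $V(0,\cdot)=U(0,\cdot)$, and by the two-sided bound \eqref{e:bounds-U} the canonical metric of $V$ on the section $\{0\}\times[-1,1]^d$ is comparable from above \emph{and below} to $|x-y|^{\theta_2}$. Under \eqref{e:U-exist} one has $\theta_2<\lambda$ strictly (because $\gamma+H_0-1<0$ and $\theta_2<1$), so this single section already forces the covering number $\cN(\e)$ of $[0,1]\times[-1,1]^d$ under the canonical metric of $V$ to satisfy $\cN(\e)\ge c\,\e^{-d/\theta_2}$; consequently your slab series is dominated by the smallest-$t$ slabs, not by its finest scale, whenever $d/\theta_2>1+d/\lambda$. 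That regime is compatible with every hypothesis of part (3): take $d=1$, $H_1=\tfrac12$, $\alpha=1$, $\beta=\tfrac12$, $\gamma=0$, $H_0=\tfrac45$, so that \eqref{e:DL} reads $\tfrac35>\tfrac12$, \eqref{e:U-exist} holds, $\beta+\gamma-2\le0$, and $\theta_1=\tfrac1{20}$, $\theta_2=\tfrac1{10}$, $\lambda=\tfrac12$, giving $d/\theta_2=10>3=1+d/\lambda$. In this regime no proof can succeed, because the claim contradicts Proposition~\ref{prop:small-ball-U}(2):
\begin{equation*}
\P\Big\{\sup_{(t,x)\in[0,1]\times[-1,1]^d}|V(t,x)|\le\e\Big\}\le\P\Big\{\sup_{x\in[-1,1]^d}|U(0,x)-U(0,0)|\le2\e\Big\}\le \exp\big(-C\e^{-d/\theta_2}\big),
\end{equation*}
where the last inequality uses Proposition~\ref{prop:small-ball-U}(2) together with the fact, noted in its proof via \eqref{e:indepence-t}, that the law of $x\mapsto U(t,x)-U(t,0)$ does not depend on $t$; for small $\e$ this is strictly smaller than the claimed lower bound $\exp\big(-C\e^{-(1+d/\lambda)}\big)$.

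Your instinct that the degeneracy at $t=0$ is the heart of part (3) is exactly right --- in fact the paper's own proof commits the error you were trying to avoid, multiplying $\cN_1(\e)\le C\e^{-1}$ by a spatial count $\cN_2(\e)\le C(t)\e^{-d/\lambda}$ whose constant blows up as $t\downarrow0$ --- but the difficulty is not merely technical: the $\theta_2$-roughness of $V(0,\cdot)$ must show up in the answer. A correct statement must either restrict the supremum to $[a,1]\times[-1,1]^d$ with $a>0$ (where constants are uniform and both your argument and the paper's give $\exp(-C_a\e^{-(1+d/\lambda)})$), or enlarge the exponent to at least $\max\big(1+d/\lambda,\,d/\theta_2\big)$, which your slab computation can then deliver, at least when $\alpha-d+H\le1$. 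Since $1+d/\lambda$ and $d/\theta_2$ are both strictly smaller than $Q=1/\theta_1+d/\theta_2$, either repair still suffices for the use of this proposition in the proof of Theorem~\ref{th:small-ball}(3).
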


\begin{proof}
(1) Recall that $\tilde{V}_x(t)$ is defined in \eqref{e:de-tilde-V}. Let $\cN_1(\e)$ denote the metric entropy
of the process $\{\tilde{V}_x(t)\}_{t\in[0,1]}$, i.e.,  the smallest number of open balls of radius $\e>0$ that are needed
 to cover $[0,1]$ which is endowed with the distance
\begin{equation}\label{e:de-d1}
  d_1(s,t):=\E\left[|\tilde{V}_x(t)-\tilde{V}_x(s)|^2\right]^{1/2}=\E\left[|V(t,x)-V(s,x)|^2\right]^{1/2}.
\end{equation}
Recall that $\theta_1$ is given in \eqref{e:theta12}. Let
  \begin{equation*}
    \tilde{\lambda}=
    \begin{cases}
      1-\theta_1, & \mbox{if } \beta+\gamma-2\le0, \\
      -H_0+\frac{\beta}{\alpha}(d-H), & \mbox{if } \beta+\gamma-2>0.
    \end{cases}
  \end{equation*}
  Note that $\tilde{\lambda}\in(0,1)$.  Define $a_0=0$, $a_1=1$ and $a_{n+1}=a_n+ca_n^{\tilde{\lambda}}$ for every
  $n\in\N$, where $c$ is given in \eqref{e:d1-pre}. Also, define
  \begin{equation*}
       t_j=c^{-1} (2\e/c^{1+\tilde{\lambda}})^\frac{1}{1-\tilde{\lambda}}\, a_j, \text{ for } j=0,1,2,\cdots.
  \end{equation*}
  According to \eqref{e:d1-pre} and \eqref{e:de-d1}, we have
  \begin{equation*}
    d_1(t_j,t_{j+1})\le c|t_{j+1}-t_j|/t_j^{\tilde{\lambda}}=2\e, \text{ for } j=0,1,2,\cdots.
  \end{equation*}
Noting that $t_0=0$ and $t_j$ is increasing in $j$,  it follows readily from the definition of $\cN_1(\e)$  that
$\cN_1(\e)\le 1+\max\{j\ge0:a_j\le c[ c^{1+\tilde{\lambda}}/(2\e)]^\frac{1}{1-\tilde{\lambda}}\}$. Noting that by
Lemma~\ref{le:an-asym}, $a_j\ge C j^\frac{1}{1-\tilde{\lambda}}$ for all $j$ for some constant $C$, we can
find $\hat{C}>0$ such that $\cN_1(\e)\le \hat{C}/\e$. Then  applying Lemma~\ref{le:talagrand} with
$\Psi(\e)=\hat{C}/\e$, we obtain the small ball probability in time
\begin{align*}
  &\mathbb{P}\bigg\{\sup_{t\in[0,1]}|V(t,x)-V(0,x)|\le\e \bigg\}=\mathbb{P}\bigg\{\sup_{t\in[0,1]}|\tilde{V}_x(t)|\le\e \bigg\}  \\
   &\ge\mathbb{P}\bigg\{\sup_{s,t\in[0,1]}|\tilde{V}_x(t)-\tilde{V}_x(s)|\le\e \bigg\} \ge \exp\left(-\frac{C}{\e}\right).
\end{align*}

(2) Recall that $\hat{V}_t(x)$ is defined in \eqref{e:de-hat-V}. Let $\cN_2(\e)$ denote   the smallest number of
open balls of radius $\e>0$ that are needed to cover $[-1,1]^d$  which is endowed with the distance
\begin{equation*}
  d_2(x,y):=\E\left[|\hat{V}_t(x)-\hat{V}_t(y)|^2\right]^{1/2}=\E\left[|V(t,x)-V(t,y)|^2\right]^{1/2}.
\end{equation*}
Let $\lambda=(\alpha-d+H)\wedge1$. By \eqref{e:upb-vtxvty}, we have  $d_2(x,y)\le c|x-y|^\lambda$,
where  $c>0$ depending on $t$.
  If $d_2(x,y)=\e$, then $|x-y|\ge (c^{-1}\e)^{1/\lambda}$. It follows readily that there exists $C>0$ depending on $t$ such that
  \begin{equation*}
    \cN_2(\e)\le \frac{2^d}{|x-y|^d}\le \frac{2^d}{\e^{d/\lambda}}\le \frac{C}{\e^{d/\lambda}}.
  \end{equation*}
  Applying Lemma \ref{le:talagrand} with $\Psi(\e)=C/\e^{d/\lambda}$, we have
  \begin{align*}
   &\mathbb{P}\bigg\{\sup_{x\in[-1,1]^d}|V(t,x)-V(t,0)|\le\e \bigg\}=\mathbb{P}\bigg\{\sup_{x\in[-1,1]^d}|\hat{V}_t(x)|\le\e \bigg\}\\
   &\ge\mathbb{P}\bigg\{\sup_{x,y\in[-1,1]^d}|\hat{V}_t(x)-\hat{V}_t(y)|\le\e \bigg\} \ge \exp\left(-\frac{C}{\e^{d/\lambda}}\right).
  \end{align*}

  (3) Let $\cN(\e)$  the smallest number of open balls of radius $\e>0$ that are needed to cover $[0,1]\times[-1,1]^d$ which is endowed
  with the distance
   $$ d((t,x),(s,y)):=\E\left[|V(t,x)-V(s,y)|^2\right]^{1/2}.$$
Let $\lambda=(\alpha-d+H)\wedge1$. Then by the triangle inequality, we have
$$ d((t,x),(s,y))\le d_1(s,t)+d_2(x,y).$$
  Note that $\cN_1(\e)\le \frac{C}{\e}$ and $\cN_2(\e)\le \frac{C}{\e^{d/\lambda}}$. Thus, $\cN(\e) \le \frac{C}{\e^{1+d/\lambda}}$.
  Applying Lemma \ref{le:talagrand} with $\Psi(\e)=\frac{C}{\e^{1+d/\lambda}}$, we obtain that
  \begin{align*}
     & \mathbb{P}\bigg\{\sup_{(t,x)\in[0,1]\times[-1,1]^d}|V(t,x)|\le\e \bigg\} \\
     &\ge\mathbb{P}\bigg\{\sup_{(t,x),(s,y)\in[0,1]\times[-1,1]^d}|V(t,x)-V(s,y)|\le\e \bigg\}\ge \exp\left(-\frac{C}{\e^{1+d/\lambda}}\right).
  \end{align*}
  This completes the proof.
\end{proof}

Now, we can state and prove the main result of this section.

\begin{theorem}\label{th:small-ball}
Let $\{u(t,x), t \ge 0, x \in \R^d\} $ be the solution of \eqref{e:fde} and recall that $\theta_1$ and $\theta_2$  are given in \eqref{e:theta12}.
  \begin{enumerate}[(1)]
    \item (Small ball probability in time). Assume the conditions \eqref{e:DL} and \eqref{e:tilde-U-exist} hold. Assume additionally
    either $\beta+\gamma-2\le0$ or
    \begin{equation*}
      \begin{cases}
        \beta+\gamma-2>0, \\
        -H_0+\frac{\beta}{\alpha}(d-H)<1.
      \end{cases}
    \end{equation*}
        Then,
    \begin{equation*}
      \lim_{\e\to0} \e^{1/\theta_1}\log\P\bigg\{\max_{t\in[0,1]}|u(t,x)|\le\e\bigg\} = -C_1^{1/2\theta_1}\kappa ,
    \end{equation*}
    where $C_1$ is defined in \eqref{e:de-C1} and $\kappa$ is the small ball constant of a fractional Brownian motion
    with index $\theta_1$ (e.g. \cite[Theorem 6.9]{Li2001Gaussian}).
    \item (Small ball probability in space). Assume the conditions \eqref{e:DL} and \eqref{e:hat-U-exist} hold. There exist
    positive constants $c$ and $C$ independent of $t>0$ such that for sufficiently small $\e>0$,
    \begin{equation*}
      \exp\left(-\frac{c}{\e^{d/\theta_2}}\right)\le \P\bigg\{\max_{x\in[-1,1]^d}|u(t,x)-u(t,0)|\le\e\bigg\}\le \exp\left(-\frac{C}{\e^{d/\theta_2}}\right).
    \end{equation*}
    \item (Space-time joint small ball probability). Assume the conditions \eqref{e:DL} and \eqref{e:U-exist} hold. Assume additionally
    either $\beta+\gamma-2\le0$ or
    \begin{equation*}
      \begin{cases}
        \beta+\gamma-2>0, \\
        -H_0+\frac{\beta}{\alpha}(d-H)<1.
      \end{cases}
    \end{equation*}
    Then, there exist positive constants $c$ and $C$ such that for sufficiently small $\e>0$,
    \begin{equation*}
      \exp\left(-\frac{c}{\e^{Q}}\right)\le \P\left\{\max_{(t,x)\in [0,1]\times[-1,1]^d}|u(t,x)|\le\e\right\}\le \exp\left(-\frac{C}{\e^{Q}}\right),
    \end{equation*}
  where $Q=\frac{1}{\theta_1}+\frac{d}{\theta_2}$.
  \end{enumerate}
\end{theorem}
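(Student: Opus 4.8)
The plan is to combine the small ball estimates for the rough component $U$ (Proposition \ref{prop:small-ball-U}) and the smooth component $V$ (Proposition \ref{prop:V-sb}) through the decomposition $u=U-V$. The essential difficulty is that $U$ and $V$ are \emph{not} independent: both are built from the same noise $W$, and since the noise is fractional in time ($H_0\ge 1/2$) its restrictions to $(-\infty,0)$ and $[0,\infty)$ remain correlated. To decouple the two sup-norm ball events I would invoke the Gaussian correlation inequality (GCI): for the jointly centered Gaussian pair $(U,V)$ and any $a,b>0$, the symmetric convex cylinders $\{\sup|U|\le a\}$ and $\{\sup|V|\le b\}$ satisfy
\[ \P(\sup|U|\le a,\ \sup|V|\le b)\ge \P(\sup|U|\le a)\,\P(\sup|V|\le b), \]
and likewise with $U$ replaced by $u$. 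In every regime the mechanism is that $V$ has a strictly slower-decaying small ball probability than $U$, so its contribution to the exponent is negligible as $\e\to0$.

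For Part (1), working with $u(t,x)=\tilde U_x(t)-\tilde V_x(t)$ on $[0,1]$, I would fix $\eta\in(0,1)$ and use the inclusions $\{\sup|\tilde U_x|\le(1-\eta)\e\}\cap\{\sup|\tilde V_x|\le\eta\e\}\subseteq\{\sup|u|\le\e\}$ and $\{\sup|u|\le\e\}\cap\{\sup|\tilde V_x|\le\eta\e\}\subseteq\{\sup|\tilde U_x|\le(1+\eta)\e\}$. Applying GCI to each gives
\[ \P(\sup|\tilde U_x|\le(1-\eta)\e)\,\P(\sup|\tilde V_x|\le\eta\e)\le\P(\sup|u|\le\e)\le\frac{\P(\sup|\tilde U_x|\le(1+\eta)\e)}{\P(\sup|\tilde V_x|\le\eta\e)}. \]
Multiplying logarithms by $\e^{1/\theta_1}$ and letting $\e\to0$, the exact asymptotic $\lim_{\e\to0}\e^{1/\theta_1}\log\P(\sup|\tilde U_x|\le\e)=-C_1^{1/2\theta_1}\kappa$ from Proposition \ref{prop:small-ball-U}(1) produces the boundary values $-C_1^{1/2\theta_1}\kappa(1-\eta)^{-1/\theta_1}$ (lower) and $-C_1^{1/2\theta_1}\kappa(1+\eta)^{-1/\theta_1}$ (upper), while the $V$-term $\e^{1/\theta_1}\log\P(\sup|\tilde V_x|\le\eta\e)=O(\e^{1/\theta_1-1})\to0$ by Proposition \ref{prop:V-sb}(1) (here $1/\theta_1>1$ since $\theta_1<1$). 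Sending $\eta\to0$ squeezes both bounds to $-C_1^{1/2\theta_1}\kappa$.

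For Parts (2) and (3) only matching exponents are required, so the cruder splits $a=b=\e/2$ (lower bound) and $\{\sup|u|\le\e\}\cap\{\sup|V|\le\e\}\subseteq\{\sup|U|\le2\e\}$ (upper bound) suffice. The key arithmetic is the exponent comparison: writing $\lambda=(\alpha-d+H)\wedge 1$, the condition $\gamma<1-H_0$ in \eqref{e:hat-U-exist} and \eqref{e:U-exist} forces $\theta_2=\alpha-d+H+\frac{\alpha}{\beta}(\gamma+H_0-1)<\alpha-d+H$, and combined with $\theta_2<1$ this yields $\theta_2<\lambda$, hence $d/\lambda<d/\theta_2$ and (using $\theta_1<1$) $1+d/\lambda<1/\theta_1+d/\theta_2=Q$. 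Thus the $V$-exponents $d/\lambda$ and $1+d/\lambda$ from Proposition \ref{prop:V-sb}(2),(3) are strictly smaller than the target $U$-exponents $d/\theta_2$ and $Q$ from Proposition \ref{prop:small-ball-U}(2),(3). Feeding these into the GCI inequalities, the $V$-contributions are absorbed into the dominant $U$-term for all sufficiently small $\e$, giving the two-sided bounds $\exp(-c\,\e^{-d/\theta_2})\le\P\le\exp(-C\,\e^{-d/\theta_2})$ and $\exp(-c\,\e^{-Q})\le\P\le\exp(-C\,\e^{-Q})$.

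The main obstacle is not any single computation but the decoupling itself: because the fractional-in-time noise correlates past and future, $U$ and $V$ cannot be separated by independence, and one must rely on the Gaussian correlation inequality to factor the joint ball probabilities. The secondary point requiring care is verifying, from the standing hypotheses, the strict inequalities $\theta_2<\lambda$ and $1+d/\lambda<Q$ (together with $1/\theta_1>1$) that guarantee $V$ is genuinely the smoother, negligible component in each of the three regimes.
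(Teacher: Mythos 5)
Your proposal is correct and follows essentially the same route as the paper's proof: decompose $u=U-V$, decouple the two sup-norm ball events via the Gaussian correlation inequality, feed in Propositions \ref{prop:small-ball-U} and \ref{prop:V-sb}, and use that the small ball probability of the smooth part decays strictly slower so its contribution to the exponent vanishes (your $(1-\eta)\e$, $(1+\eta)\e$ split for part (1) is exactly the paper's $\rho\e$, $\tilde\rho\e$ split with $\rho<1<\tilde\rho$). The only difference is one of detail: you spell out the exponent comparisons $\theta_2<\lambda=(\alpha-d+H)\wedge 1$, $d/\lambda<d/\theta_2$ and $1+d/\lambda<Q$ needed for parts (2) and (3), which the paper dispatches with a single ``similarly.''
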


\begin{proof}
{\em Step 1:} Let $\rho\in(0,1)$ be an arbitrary real number. By \eqref{e:u=tildeUV}, we have
\begin{align*}
   & \P\left\{\max_{t\in[0,1]}|u(t,x)|\le\e\right\}\\
   &\ge \P\left\{\max_{t\in[0,1]}|\tilde{U}_x(t)|\le\rho\e,\: \max_{t\in[0,1]}|\tilde{V}_x(t)|\le(1-\rho)\e\right\}\\
   &\ge \P\left\{\max_{t\in[0,1]}|\tilde{U}_x(t)|\le\rho\e\right\}\times\P\left\{\max_{t\in[0,1]}|\tilde{V}_x(t)|\le(1-\rho)\e\right\}\\
   &=\P\left\{\max_{t\in[0,1]}|U(t,x)-U(0,x)|\le\rho\e\right\}\times\P\left\{\max_{t\in[0,1]}|V(t,x)-V(0,x)|\le(1-\rho)\e\right\},
\end{align*}
where in the second inequality we use the Gaussian correlation inequality (see Remark \ref{re:GCI}).  Then, by Proposition
\ref{prop:small-ball-U} (1)  and Proposition \ref{prop:V-sb} (1), we have
\begin{align}\label{e:upb-sb-u-t}
   &\liminf_{\e\to0}\e^{1/\theta_1}\log\P\left\{\max_{t\in[0,1]}|u(t,x)|\le\e\right\}\ge -\frac{ C_1^{1/2\theta_1}\kappa}{\rho^{1/\theta_1}}.
\end{align}
  Let $\tilde\rho>1$ be an arbitrary real number. For the upper bound, applying \eqref{e:u=tildeUV} and the Gaussian correlation inequality again, we get
  \begin{align*}
   & \P\left\{\max_{t\in[0,1]}|\tilde{U}_x(t)|\le\tilde\rho\e\right\}\ge \P\left\{\max_{t\in[0,1]}|u(t,x)|\le\e,\: \max_{t\in[0,1]}|\tilde{V}_x(t)|\le(\tilde\rho-1)\e\right\}\\
   &\ge \P\left\{\max_{t\in[0,1]}|u(t,x)|\le\e\right\}\times \P\left\{\max_{t\in[0,1]}|\tilde{V}_x(t)|\le(\tilde\rho-1)\e\right\}.
\end{align*}
Then, using Proposition \ref{prop:small-ball-U} (1)  and Proposition \ref{prop:V-sb} (1), we have
\begin{align}\label{e:lowb-sb-u-t}
   & \limsup_{\e\to0}\e^{1/\theta_1}\log \P\left\{\max_{t\in[0,1]}|u(t,x)|\le\e\right\}\le -\frac{C_1^{1/2\theta_1}\kappa }{\tilde\rho^{1/\theta_1}}.
\end{align}
Combining \eqref{e:upb-sb-u-t} with \eqref{e:lowb-sb-u-t}, we have
\begin{align*}
     -\frac{ C_1^{1/2\theta_1}\kappa}{\rho^{1/\theta_1}} &\le \liminf_{\e\to0}\e^{1/\theta_1}\log\P\left\{\max_{t\in[0,1]}|u(t,x)|\le\e\right\}\\
     &\le \limsup_{\e\to0}\e^{1/\theta_1}\log\P\left\{\max_{t\in[0,1]}|u(t,x)|\le\e\right\}\le -\frac{C_1^{1/2\theta_1}\kappa }{\tilde\rho^{1/\theta_1}}.
\end{align*}
The desired small ball probability in time follows from the fact that both $\rho\in(0,1)$ and $\tilde\rho>1$ can be arbitrarily close to 1.

Similarly, using Proposition \ref{prop:small-ball-U}, Proposition \ref{prop:V-sb}, and the Gaussian correlation inequality, we can also
prove the rest two assertions. This finishes the proof.
\end{proof}

\begin{remark}[A comparison of Theorem \ref{th:small-ball} with the known result]
Considering $1$-d stochastic heat equation with space-time white noise  (i.e., $\alpha=2$, $\beta=1$, $\gamma=0$, $H_0=H=1/2$, $d=1$),
there exist positive constants $c$ and $C$ such that for sufficiently small $\e>0$,
    \begin{equation*}
      \exp\left(-\frac{c}{\e^{6}}\right)\le \P\left\{\max_{(t,x)\in [0,1]\times[-1,1]^d}|u(t,x)|\le\e\right\}\le \exp\left(-\frac{C}{\e^{6}}\right).
    \end{equation*}
This coincides with \cite[Theorem 1.1]{Athreya2021Smallball}.
\end{remark}

\begin{remark}\label{re:GCI}
The Gaussian correlation inequality (GCI), which had been a long-standing conjecture until it was proved rigorously by Royen \cite{Royen2014simpleproof}
(see also \cite{Lata2017Royen}),  has been actively investigated and applied in the study of small ball probabilities. For example, Li \cite{Li1999Gaussian}
proved a weak-version of GCI and applied it to estimate small ball probabilities;   Shao \cite{Shao2003Gaussian} obtained another weak-version of
GCI and then proved the existence of the small ball constant for the fractional Brownian motion; as for the applications of GCI for the stochastic heat equation,
we refer to  \cite{Athreya2021Smallball,Carfagnini2022application}.
\end{remark}

\begin{appendix}
\section{Some miscellaneous results}\label{se:appendix}

In this section, we provide some technical lemmas. The following result is borrowed from \cite[Lemma 5.1.1]{nualart06} (see also \cite{mmv01}).

\begin{lemma}\label{le:B.2}
  For any $\varphi\in L^{1/{H_0}}(\R^n)$ with $H_0\in(1/2,1)$,
  \begin{equation*}
    \int_{\R}\int_{\R}\varphi( t)\varphi(s)|t-s|^{2H_0-2}\ud t\ud s
    \le C_{H_0} \left( \int_{\R}\left|\varphi(t)\right|^{1/H_0}\ud t \right)^{2H_0},
  \end{equation*}
where $C_{H_0}>0$ is a constant depending on $H_0$.
\end{lemma}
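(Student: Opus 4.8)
The plan is to recognize this inequality as a special case of the Hardy--Littlewood--Sobolev (HLS) inequality applied with a single function placed in both slots. First I would recall the HLS inequality on $\R$: for a Riesz kernel $|t-s|^{-\lambda}$ with $0<\lambda<1$ and exponents $p,q\in(1,\infty)$ satisfying the scaling relation $\tfrac1p+\tfrac1q+\lambda=2$, there is a constant $C=C(\lambda,p,q)$ such that
\[
\left| \int_\R\int_\R f(t)g(s)|t-s|^{-\lambda}\,\ud t\,\ud s \right| \le C\,\Norm{f}_{L^p(\R)}\,\Norm{g}_{L^q(\R)}.
\]

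Next I would match parameters. Writing the kernel as $|t-s|^{2H_0-2}=|t-s|^{-\lambda}$ identifies $\lambda=2-2H_0$, which lies in $(0,1)$ precisely because $H_0\in(1/2,1)$, so the kernel is admissible (locally integrable and subcritical). I would then set $f=g=\varphi$ and $p=q=1/H_0$, and verify the balancing condition
\[
\frac1p+\frac1q+\lambda = H_0+H_0+(2-2H_0)=2,
\]
so that HLS applies directly. Substituting into the HLS bound yields
\[
\int_\R\int_\R \varphi(t)\varphi(s)|t-s|^{2H_0-2}\,\ud t\,\ud s \le C\,\Norm{\varphi}_{L^{1/H_0}(\R)}^2 = C_{H_0}\left( \int_\R |\varphi(t)|^{1/H_0}\,\ud t \right)^{2H_0},
\]
which is exactly the claimed inequality with $C_{H_0}:=C(2-2H_0,\,1/H_0,\,1/H_0)$.

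Since the statement is a direct consequence of a classical estimate, there is no genuine obstacle; the only point requiring care is the verification of the scaling condition, which is what forces the specific exponent $1/H_0$ to appear and pins down the correct power $2H_0$ on the right-hand side. It is also this condition that explains why the hypothesis is stated in terms of the $L^{1/H_0}$ norm: the bilinear form on the left is finite exactly when the right-hand side is, so the inequality is meaningful precisely on $\varphi\in L^{1/H_0}(\R)$. For completeness I would either cite a standard reference for the HLS inequality or, as indicated in the statement, invoke \cite[Lemma 5.1.1]{nualart06} directly.
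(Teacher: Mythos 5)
Your proof is correct: the parameter matching $\lambda=2-2H_0$, $p=q=1/H_0$ satisfies the Hardy--Littlewood--Sobolev scaling condition, and the stated bound follows at once since $\Norm{\varphi}_{L^{1/H_0}(\R)}^2=\bigl(\int_\R|\varphi(t)|^{1/H_0}\,\ud t\bigr)^{2H_0}$. The paper itself offers no argument but simply cites \cite[Lemma 5.1.1]{nualart06}, whose proof is exactly this application of HLS, so your route coincides with the paper's.
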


\begin{lemma}\label{le:space-integral}
Let $H_j\in(0,1)$ with $j=1,\dots,d$ and $H=\sum_{j=1}^{d}H_j$.
\begin{enumerate}[(i)]
  \item The integral
  \begin{equation*}
    \int_{\R^d} \frac{\prod_{j=1}^{d}c_{H_j} |\xi_j|^{1-2H_j}}{1+|\xi|^{2\alpha}} \ud\xi
  \end{equation*}
  is convergent if and only if $\alpha>d-H$.
  \item   The integral
  \begin{equation*}
    \int_{\R^d} \frac{\prod_{j=1}^{d}c_{H_j} |\xi_j|^{1-2H_j}}{1+|\xi|^{2\alpha}}|\xi|^2 \ud\xi
  \end{equation*}
  is convergent if and only if $\alpha>d-H+1$.
  \item The following inequalities
  \begin{equation*}
    \int_{\mathbb{S}^{d-1}} \prod_{j=1}^{d}c_{H_j}|w_j|^{1-2H_j} \sigma(\ud w)<\infty,
  \end{equation*}
  and
  \begin{equation*}
    \int_{\mathbb{S}^{d-1}} |w_i|^2\prod_{j=1}^{d}c_{H_j}|w_j|^{1-2H_j} \sigma(\ud w)<\infty, \quad \text{for } i=1,\dots,d,
  \end{equation*}
  hold, where $\sigma(\ud w)$ is the uniform measure on the $(d-1)$-dimensional unit sphere $\mathbb{S}^{d-1}$.
\end{enumerate}
\end{lemma}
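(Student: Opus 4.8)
The plan is to separate the radial and angular parts of each integral by the polar-type change of variables $\xi_j = r w_j$ from \eqref{e:changevariable}, where $r = |\xi|$ and $w \in \mathbb{S}^{d-1}$. Since $\prod_{j=1}^d |\xi_j|^{1-2H_j} = r^{\,d-2H}\prod_{j=1}^d |w_j|^{1-2H_j}$ (using $\sum_{j=1}^d(1-2H_j) = d-2H$) and the denominators depend on $\xi$ only through $r=|\xi|$, each integrand factorizes, and because every integrand is nonnegative, Tonelli's theorem lets me write each $d$-dimensional integral as a product of a radial integral in $r$ and an angular integral over $\mathbb{S}^{d-1}$. The crux is therefore to establish that the angular factors in part (iii) are both finite and strictly positive; once this is in hand, convergence of the integrals in (i) and (ii) is equivalent to convergence of the corresponding radial integrals, which is a routine power count.

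I would prove (iii) first, as it is the genuine obstacle: on $\mathbb{S}^{d-1}$ the integrand blows up on the coordinate great spheres $\{w_j=0\}$ whenever $H_j>1/2$ (so that $1-2H_j<0$). The clean route is a Gaussian comparison. Evaluating $\int_{\R^d}\prod_{j=1}^d |\xi_j|^{1-2H_j}\, e^{-|\xi|^2}\,\ud\xi$ in Cartesian coordinates, it factorizes as $\prod_{j=1}^d \int_{\R} |t|^{1-2H_j} e^{-t^2}\,\ud t$, and each one-dimensional factor is finite and positive because $1-2H_j>-1$ (as $H_j<1$) controls the singularity at the origin while the Gaussian controls the tail. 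Evaluating the same integral in polar coordinates gives $\big(\int_0^\infty r^{2d-2H-1}e^{-r^2}\,\ud r\big)\times\int_{\mathbb{S}^{d-1}}\prod_{j=1}^d |w_j|^{1-2H_j}\,\sigma(\ud w)$, whose radial factor is a finite positive Gamma-type integral since $2d-2H-1>-1$ (indeed $H<d$). Dividing one expression by the other shows the first angular integral is finite; for the second, I repeat the argument with the extra weight $\xi_i^2$, so that the $i$-th Cartesian factor becomes $\int_{\R}|t|^{3-2H_i}e^{-t^2}\,\ud t<\infty$ (here $3-2H_i>-1$ trivially), and again polar coordinates yield finiteness of $\int_{\mathbb{S}^{d-1}} |w_i|^2\prod_{j=1}^d |w_j|^{1-2H_j}\,\sigma(\ud w)$. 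Inserting the constants $c_{H_j}>0$ only multiplies everything by a fixed positive factor.

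For (i) and (ii) I would then carry out the power count. By the factorization and (iii), the integral in (i) converges if and only if $\int_0^\infty \frac{r^{2d-2H-1}}{1+r^{2\alpha}}\,\ud r<\infty$. Near $r=0$ the integrand behaves like $r^{2d-2H-1}$, which is integrable since $H<d$ forces $2d-2H-1>-1$; near $r=\infty$ it behaves like $r^{2d-2H-1-2\alpha}$, integrable precisely when $2d-2H-1-2\alpha<-1$, i.e. $\alpha>d-H$. For (ii) the extra factor $|\xi|^2=r^2$ raises the radial exponent by two, giving $\int_0^\infty \frac{r^{2d-2H+1}}{1+r^{2\alpha}}\,\ud r$; this is again integrable at the origin and, at infinity, integrable exactly when $2d-2H+1-2\alpha<-1$, i.e. $\alpha>d-H+1$. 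I expect the angular finiteness in (iii) to be the only step requiring a genuine idea; the radial analyses are elementary.
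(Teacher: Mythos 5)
Your proof is correct, and it shares the paper's overall skeleton: both arguments use the polar factorization $\xi = rw$ to split each integral into a radial factor and an angular factor, and both reduce (i) and (ii) to the same elementary power counts $\alpha > d-H$ and $\alpha > d-H+1$ once the angular factor is known to lie in $(0,\infty)$. Where you genuinely diverge is in the proof of (iii), which is indeed the crux. The paper parameterizes $\mathbb{S}^{d-1}$ explicitly by the spherical angles $\phi_1,\dots,\phi_{d-2},\theta$ of \eqref{e:changevariable}, writes the angular integral as a product of one-dimensional trigonometric integrals as in \eqref{e:sphere-int}, and checks integrability of each factor from the exponent conditions $\sum_{j=k+1}^{d}(1-2H_j)+d-1-k>-1$ and $1-2H_j>-1$. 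You instead use a Gaussian comparison: evaluate $\int_{\R^d}\prod_j|\xi_j|^{1-2H_j}e^{-|\xi|^2}\ud\xi$ once in Cartesian coordinates (where it factorizes into finite positive one-dimensional integrals because $1-2H_j>-1$) and once in polar coordinates, then divide out the finite positive radial Gamma factor $\tfrac12\Gamma(d-H)$. This avoids the bookkeeping of the explicit spherical parameterization entirely, and it sidesteps a cosmetic blemish in the paper's formula (powers of $\cos\phi_k$ on $[0,\pi]$ should really be powers of $|\cos\phi_k|$). Your route also has the merit of making explicit that the angular factor is strictly positive, not merely finite---a point that is genuinely needed for the ``only if'' directions of (i) and (ii) and that the paper leaves implicit. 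Both approaches are sound; yours is arguably the cleaner one, while the paper's has the side benefit of producing the explicit product formula \eqref{e:sphere-int}, which it reuses nowhere else, so nothing is lost by your shortcut.
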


\begin{proof}
 Using the $d$-dimensional spherical coordinate: $r\in[0,\infty)$, $\theta\in[0,2\pi)$ and $\phi_1, \dots, \phi_{d-2}\in[0, \pi]$,
\begin{equation}\label{e:changevariable}
 \begin{cases}
  &\xi_1=rw_1 = r\cos(\phi_1),\\
  &\xi_2=rw_2 = r\sin(\phi_1)\cos(\phi_2),\\
  &\xi_3=rw_3 =r \sin(\phi_1)\sin(\phi_2)\cos(\phi_3),\\
  & \vdots\\
  &\xi_{d-2}=rw_{d-2}=r\sin(\phi_1)\dots\sin(\phi_{d-3})\cos(\phi_{d-2}),\\
  &\xi_{d-1}=rw_{d-1}  = r \sin(\phi_1)\dots\sin(\phi_{d-3})\sin(\phi_{d-2})\cos(\theta),\\
  &\xi_{d}=rw_{d}  = r\sin(\phi_1)\dots\sin(\phi_{d-3})\sin(\phi_{d-2})\sin(\theta),
 \end{cases}
\end{equation}
we have
\begin{align*}
   &\int_{\R^d} \frac{\prod_{j=1}^{d}c_{H_j} |\xi_j|^{1-2H_j}}{1+|\xi|^{2\alpha}} \ud\xi= \int_{0}^{\infty}\frac{r^{2d-2H-1}}{1+r^{2\alpha}} \ud r
   \times \int_{\mathbb{S}^{d-1}} \prod_{j=1}^{d}c_{H_j}|w_j|^{1-2H_j} \sigma(\ud w),
\end{align*}
where the first integral is finite if and only if $\alpha>d-H$, and for the second one we have
\begin{equation}\label{e:sphere-int}
\begin{aligned}
   & \int_{\mathbb{S}^{d-1}} \prod_{j=1}^{d}c_{H_j} |w_j|^{1-2H_j}\sigma(\ud w)\\
   & = \prod_{j=1}^{d}c_{H_j}\times\prod_{k=1}^{d-2} \int_{0}^{\pi} \cos^{1-2H_k}(\phi_k) \sin^{\sum_{j=k+1}^{d}1-2H_j}(\phi_k) \sin^{d-1-k}(\phi_k) \ud \phi_k \\
    &\qquad \times \int_{0}^{2\pi} \cos^{1-2H_{d-1}}(\theta) \sin^{1-2H_d}(\theta) \ud\theta.
\end{aligned}
\end{equation}
Since $H_1,H_2,\dots,H_d\in(0,1)$, we have
 \begin{equation*}
   \sum_{j=k+1}^{d} (1-2H_j)+d-1-k>-1,
 \end{equation*}
 for $k=1,2,\dots,d-2$, and
 \begin{equation*}
   1-2H_j>-1,
 \end{equation*}
for $j=1,2,\dots,d$. Then, the integrals with respect to $\phi_1,\dots,\phi_{d-2}$ and $\theta$ are finite. This proves (i). By using a similar argument,
we can also prove the assertions in (ii)-(iii). This completes the proof.
\end{proof}

Recall that the Mittag-Leffler function $E_{a,b}$ is given in \eqref{e:mlf}. The following lemma is an extension of \cite[Lemma 2.2]{Xiao2017JDE}.
\begin{lemma}\label{le:Fourier-MLF}
 (Mittag–Leffler Fourier transform). Assume  $x>0$.
 \begin{enumerate}[(i)]
   \item If $\beta\in(0,2]$ and $\gamma\in\R$, then for any $\sigma>0$, $\tau\in\R$,
   \begin{equation}\label{e:lap-MLF}
     \cF\left[\one_{t>0}\:t^{\beta+\gamma-1}E_{\beta,\beta+\gamma}\big(-xt^\beta\big)e^{-t\sigma}\right](\tau)=\frac{(\sigma+i\tau)^{-\gamma}}{(\sigma+i\tau)^\beta+x}.
   \end{equation}

   \item  If $\beta\in(0,2)$ and $\gamma\in[0,1)$, then for any real number $\tau\neq0$,
   \begin{equation}\label{e:fourier-MLF}
     \cF\left[\one_{t>0}\:t^{\beta+\gamma-1}E_{\beta,\beta+\gamma}\big(-xt^\beta\big)\right](\tau)=\frac{(i\tau)^{-\gamma}}{(i\tau)^\beta+x}.
   \end{equation}
 \end{enumerate}
\end{lemma}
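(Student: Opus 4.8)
The plan is to read part (i) as the evaluation of a complex Laplace transform and to obtain part (ii) as its boundary value as the damping parameter tends to zero. For (i), since $\cF\varphi(\tau)=\int_\R e^{-it\tau}\varphi(t)\,\ud t$, the left-hand side equals $\int_0^\infty e^{-st}\,t^{\beta+\gamma-1}E_{\beta,\beta+\gamma}(-xt^\beta)\,\ud t$ with $s=\sigma+i\tau$ and $\Re s=\sigma>0$. First I would insert the series \eqref{e:mlf}, namely $t^{\beta+\gamma-1}E_{\beta,\beta+\gamma}(-xt^\beta)=\sum_{k\ge0}\frac{(-x)^k}{\Gamma(\beta k+\beta+\gamma)}\,t^{\beta k+\beta+\gamma-1}$, and integrate term by term using the Gamma integral $\int_0^\infty e^{-st}t^{c-1}\,\ud t=\Gamma(c)\,s^{-c}$ (valid for $\Re s>0$, $\Re c>0$; here $c=\beta k+\beta+\gamma>0$). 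The Gamma factors cancel, leaving $\sum_{k\ge0}(-x)^k s^{-(\beta k+\beta+\gamma)}=s^{-(\beta+\gamma)}\sum_{k\ge0}(-x s^{-\beta})^k=\frac{s^{-\gamma}}{s^\beta+x}$, which is exactly the claimed right-hand side.

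To make this rigorous I would first restrict to $\Re s>x^{1/\beta}$, where $\sum_{k\ge0}\int_0^\infty e^{-(\Re s)t}\frac{x^k t^{\beta k+\beta+\gamma-1}}{\Gamma(\beta k+\beta+\gamma)}\,\ud t=\sum_{k\ge0}x^k(\Re s)^{-(\beta k+\beta+\gamma)}<\infty$, so Fubini–Tonelli legitimizes the interchange and $|x s^{-\beta}|<1$ makes the geometric series converge. Both sides are then analytic in $s$ on the right half-plane $\{\Re s>0\}$: the denominator $s^\beta+x$ has no zero there because $\arg(s^\beta)=\beta\arg s\in(-\beta\pi/2,\beta\pi/2)\subset(-\pi,\pi)$ keeps $s^\beta$ off the negative real axis for $\beta\in(0,2]$. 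Hence the identity theorem propagates the formula from the sub-half-plane to all of $\{\Re s>0\}$, which gives (i) after setting $s=\sigma+i\tau$.

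For (ii) the natural route is to let $\sigma\downarrow0$ in (i) at a fixed $\tau\neq0$. On the right-hand side, $\frac{(\sigma+i\tau)^{-\gamma}}{(\sigma+i\tau)^\beta+x}$ is continuous up to $\sigma=0$ for $\tau\neq0$ (the principal branches being continuous away from the cut) and converges to $\frac{(i\tau)^{-\gamma}}{(i\tau)^\beta+x}$; note the limiting denominator is nonzero precisely because $\beta\in(0,2)$ forces $\arg((i\tau)^\beta)=\pm\beta\pi/2\in(-\pi,\pi)$. The substance is the left-hand side. Writing $g(t):=t^{\beta+\gamma-1}E_{\beta,\beta+\gamma}(-xt^\beta)$, the asymptotic \eqref{e:asym-MLF} with $a=\beta\in(0,2)$ gives $g(t)\sim \frac{t^{\gamma-1}}{\Gamma(\gamma)\,x}$ as $t\to\infty$, so for $\gamma\in(0,1)$ the function $g$ fails to be integrable at infinity and the undamped Fourier integral converges only conditionally. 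This is the main obstacle, and it explains why (ii) requires $\tau\neq0$ (oscillation) together with $\gamma<1$ (so that $t^{\gamma-1}\to0$). I would resolve it by first establishing that the improper integral $\int_0^\infty e^{-it\tau}g(t)\,\ud t$ exists — its contribution near $0$ is absolutely convergent since $\beta+\gamma>0$, while on a tail $[T,\infty)$ either Dirichlet's test (as $g$ is eventually monotone with $g(t)\to0$) or one integration by parts (using $g(t)\to0$ and $g'(t)=O(t^{\gamma-2})\in L^1$) gives convergence — and then invoking Abel's theorem for such integrals, which forces the Abel means $\int_0^\infty e^{-t\sigma}e^{-it\tau}g(t)\,\ud t$ to converge to that same value as $\sigma\downarrow0$. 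Matching the two limits yields \eqref{e:fourier-MLF}. For the endpoint $\gamma=0$ the convention \eqref{e:gamma0} that $1/\Gamma(0)=0$ annihilates the leading tail term, leaving $g(t)=O(t^{-\beta-1})\in L^1$, so there the limit passage is immediate and no oscillation argument is needed.
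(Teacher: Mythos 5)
Your proof is correct, and it is worth separating the two parts when comparing it with the paper's. For (i) you follow in substance the same path as the paper: the paper quotes the Laplace transform of the Mittag--Leffler function from Kilbas et al.\ (valid on $|\theta|^{\beta}>x$) and then extends the identity to the whole half-plane $\{\Re s>0\}$ by the identity theorem for analytic functions; you simply re-derive that Laplace-transform formula by inserting the series \eqref{e:mlf} and integrating term by term under Fubini--Tonelli on $\Re s>x^{1/\beta}$, and then perform the identical continuation step, including the same observation that $s^{\beta}+x\neq0$ because $\arg(s^{\beta})\in(-\pi,\pi)$. One caveat: your Fubini step needs $\beta k+\beta+\gamma>0$ for every $k$, i.e.\ $\beta+\gamma>0$, so it does not literally cover all $\gamma\in\R$ as stated in (i); but the paper's cited formula carries the same implicit restriction, and the lemma is only ever applied with $\gamma\in[0,1)$, so this is harmless. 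For (ii) your skeleton again matches the paper's --- let $\sigma\downarrow0$ in \eqref{e:lap-MLF}, using that the right-hand side is continuous at $\sigma=0$ for $\tau\neq0$ and $\beta\in(0,2)$ --- but the key analytic mechanism is genuinely different. The paper proves continuity of the damped integrals at $\sigma=0$ by a hands-on alternating-series argument over half-periods of $\cos(t\tau)$ and $\sin(t\tau)$, which forces a case analysis on the sign of $t^{\beta+\gamma-1}E_{\beta,\beta+\gamma}(-xt^{\beta})$ for large $t$ (positive when $\gamma\in(0,1)$, or when $\beta\in(0,1]$ and $\gamma=0$; negative when $\beta\in(1,2)$ and $\gamma=0$). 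You instead first establish improper convergence of $\int_0^\infty e^{-it\tau}g(t)\,\mathrm{d}t$ --- which is solid, since by \eqref{e:differential-E} and \eqref{e:bound-MLF} one has $g'(t)=t^{\beta+\gamma-2}E_{\beta,\beta+\gamma-1}(-xt^{\beta})=O(t^{\gamma-2})\in L^1$ near infinity and $g(t)\to0$, so one integration by parts suffices --- and then invoke the classical Abelian theorem for Laplace transforms to identify the limit of the Abel means with that improper integral. This buys a cleaner, sign-free argument whose only remaining case split is $\gamma=0$ (where $g\in L^1$ and dominated convergence finishes immediately), at the cost of importing a classical theorem that the paper in effect proves by hand in this special situation; both routes are complete, yours being arguably the more modular one.
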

\begin{proof}
  {\em (i):} The equality \eqref{e:lap-MLF}
is a direct consequence of \cite[eq. (7.1)]{hms11}. For the reader's convenience, we also include our proof below.  Assume that $\beta\in(0,2]$ and $\sigma>0$. Then,
  \begin{equation}\label{e:lhs-a1}
    \cF\left[\one_{t>0}\:t^{\beta+\gamma-1}E_{\beta,\beta+\gamma}\big(-xt^\beta\big)e^{-t\sigma}\right](\tau)
  \end{equation}
  is well defined by the asymptotic property of $E_{\beta,\beta+\gamma}\big(-xt^\beta\big)$, see \eqref{e:asym-MLF} for $\beta\in(0,2)$
  and \cite[(1.8.31)]{kilbas2006theory} for $\beta=2$. Applying the Cauchy–Riemann theorem, we can prove that \eqref{e:lhs-a1} is analytic.

  Let $\theta=\sigma+i\tau$. Using Euler’s formula, we have
  \begin{equation*}
    \theta^{\beta+\gamma}+x\theta^\gamma=\left(|\theta|e^{i\omega}\right)^{\beta+\gamma}+x\left(|\theta|e^{i\omega}\right)^\gamma,
  \end{equation*}
  where $\omega=\text{arg}(\theta)\in(-\pi/2,\pi/2)$.  Since $\beta\omega\in(-\pi,\pi)$ and $x>0$, it follows that
  $\theta^{\beta+\gamma}+x\theta^\gamma\neq0$. Hence, the fraction $\dfrac{(\sigma+i\tau)^{-\gamma}}{(\sigma+i\tau)^\beta+x}$
  does not have poles and hence is analytic.

 It follows from \cite[(1.10.9)]{kilbas2006theory} on the Laplace transform of the Mittag–Leffler function that,
 if $|\theta|^{\beta}>x$,
 then
  \begin{equation}\label{e:lap-MlF-proof}
    \begin{split}
      \frac{(\sigma+i\tau)^{-\gamma}}{(\sigma+i\tau)^\beta+x} &=\cL\left[t^{\beta+\gamma-1}E_{\beta,\beta+\gamma}\big(-xt^\beta\big)\right](\theta)  \\
      & =\int_{0}^{\infty}t^{\beta+\gamma-1}E_{\beta,\beta+\gamma}\big(-xt^\beta\big)e^{-t\theta}\ud t \\
      & =\int_{-\infty}^{\infty} \left[\one_{t>0}\: t^{\beta+\gamma-1}E_{\beta,\beta+\gamma}\big(-xt^\beta\big)e^{-t\sigma}\right]e^{-it\tau}\ud t\\
      & =\cF\left[\one_{t>0}\: t^{\beta+\gamma-1}E_{\beta,\beta+\gamma}\big(-xt^\beta\big)e^{-t\sigma}\right](\tau).
    \end{split}
  \end{equation}
Therefore, applying the identity theorem for analytic functions, we can show that \eqref{e:lap-MlF-proof} holds for all $\sigma=\mathfrak{R}(\theta)>0$.
This proves the first assertion.

  \smallskip

  {\em (ii):} Assume that $\beta\in(0,2)$, $\gamma\in[0,1)$ and $\tau\neq0$. Define
  \begin{align*}
     & f(\sigma,t):=t^{\beta+\gamma-1}E_{\beta,\beta+\gamma}\big(-xt^\beta\big)e^{-t\sigma}\cos(t\tau),\quad f(t):=f(0,t), \\
     & g(\sigma,t):=t^{\beta+\gamma-1}E_{\beta,\beta+\gamma}\big(-xt^\beta\big)e^{-t\sigma}\sin(t\tau),\quad g(t):=g(0,t).
  \end{align*}
  If $\sigma\ge0$,   noting that  $\gamma<1$,
  \begin{equation}\label{e:int-fg}
    \cF\left[\one_{t>0}\:t^{\beta+\gamma-1}E_{\beta,\beta+\gamma}\big(-xt^\beta\big)e^{-t\sigma}\right](\tau)
    =\int_{0}^{\infty}f(\sigma,t)\ud t-i\int_{0}^{\infty}g(\sigma,t)\ud t,
  \end{equation}
  is well defined due to \eqref{e:asym-MLF}.

In view of \eqref{e:lap-MLF}, in order to prove \eqref{e:fourier-MLF}, it suffices to show that the last two integrals in  \eqref{e:int-fg}
are continuous at $\sigma=0$. To this end, we divide $\beta\in(0,2)$ and $\gamma\in[0,1)$ into  the following two cases:
  \begin{equation*}
    \begin{cases}
      (1):\beta\in(0,2), \gamma\in(0,1) \text{ or } \beta\in(0,1],\gamma=0; \\
      (2):\beta\in(1,2),\gamma=0.
    \end{cases}
  \end{equation*}
By the asymptotic property \eqref{e:asym-MLF} and the fact that $E_{1,1}(z)=e^{z}$, we have for $\beta\in(0,2)$,  as $z\to\infty$,
 \begin{equation}\label{e:asymp-proof-fourier}
  E_{\beta,\beta+\gamma}(-z)=
  \begin{cases}
    \dfrac{1}{\Gamma(\gamma)z}-\dfrac{1}{\Gamma(\gamma-\beta)z^2}+o\left(\dfrac{1}{z^2}\right), & \mbox{if } \beta\neq1 \text{ or } \gamma\neq0 \\
    e^{-z}, & \mbox{if } \beta=1 \text{ and } \gamma=0.
  \end{cases}
 \end{equation}

For case (1), noting $1/\Gamma(0)=1/\Gamma(-1)=0$ and $\Gamma(x)<0$ for $x\in(-1,0)$,   by \eqref{e:asymp-proof-fourier} there exists a
constant $T$ such that for  $t>T$, $t^{\beta+\gamma-1}E_{\beta,\beta+\gamma}\big(-xt^\beta\big)$ is positive and
bounded by a decreasing function that tends to zero as $t\to\infty$. Let $n\in\mathbb{N}$ such that $\frac{3\pi/2+2n\pi}{|\tau|}>T$.
For any $\sigma>0$, the integral
\begin{equation*}
   \int_{\frac{3\pi/2+2n\pi}{|\tau|}}^{\infty}f(\sigma,t)\ud t=\sum_{k=2n}^{\infty}\int_{\frac{3\pi/2+k\pi}{|\tau|}}^{\frac{5\pi/2+k\pi}{|\tau|}}f(\sigma,t)\ud t
 \end{equation*}
 is an alternating series with a positive leading term. Thus, for any $\e>0$, there exists a constant $N_\e$, such that for any $\sigma>0$ and
 $n\in\left\{n\in \mathbb N:\frac{3\pi/2+2n\pi}{|\tau|}>T,n>N_\e\right\}$,
  \begin{equation*}
  0 < \int_{\frac{3\pi/2+2n\pi}{|\tau|}}^{\infty}f(\sigma,t)\ud t<\int_{\frac{3\pi/2+2n\pi}{|\tau|}}^{\frac{5\pi/2+2n\pi}{|\tau|}}f(\sigma,t)\ud t
  <\int_{\frac{3\pi/2+2n\pi}{|\tau|}}^{\frac{5\pi/2+2n\pi}{|\tau|}}f(t)\ud t <\frac{\e}{4}.
 \end{equation*}
 Let $n_0=\min\left\{n\in \mathbb N:\frac{3\pi/2+2n\pi}{|\tau|}>T,n>N_\e\right\}$. Thus, we have
  \begin{equation*}
    \left|\int_{\frac{3\pi/2+2n_0\pi}{|\tau|}}^{\infty} \big(f(\sigma,t)-f(t) \big)\ud t\right| <\int_{\frac{3\pi/2+2n_0\pi}{|\tau|}}^{\infty}f(\sigma,t)\ud t
    +\int_{\frac{3\pi/2+2n_0\pi}{|\tau|}}^{\frac{5\pi/2+2n_0\pi}{|\tau|}}f(t)\ud t<\frac{\e}{2}.
  \end{equation*}
  For the integral on the finite interval $\left[0,\frac{3\pi/2+2n_0\pi}{|\tau|}\right]$, we have if $\sigma$ is sufficiently small, say
  $\sigma<\delta_\e$ for some $\delta_\e$ depending on $\e$,
 \begin{equation*}
    \left|\int_{0}^{\frac{3\pi/2+2n_0\pi}{|\tau|}} \big( f(\sigma,t)-f(t) \big)\ud t\right|<\frac{\e}{2}.
  \end{equation*}
  Therefore, for any $\e>0$ there exists a constant $\delta_\e$ such that when $\sigma<\delta_\e$,
  \begin{equation*}
    \left|\int_{0}^{\infty} \big( f(\sigma,t)-f(t) \big) \ud t\right|\le \left|\int_{\frac{3\pi/2+2n_0\pi}{|\tau|}}^{\infty} \big( f(\sigma,t)-f(t) \big)\ud t\right|
    +\left|\int_{0}^{\frac{3\pi/2+2n_0\pi}{|\tau|}} \big( f(\sigma,t)-f(t) \big)\ud t\right|<\e.
  \end{equation*}
This shows that $\int_{0}^{\infty}f(\sigma,t)\ud t$ is continuous at $\sigma=0$.

Now we consider case (2). Noting $1/\Gamma(0)=0$ and $\Gamma(x)>0$ for $x\in(-2,-1)$,  by \eqref{e:asymp-proof-fourier} there
exists a constant $T$ such that as $t>T$, $t^{\beta+\gamma-1}E_{\beta,\beta+\gamma}\big(-xt^\beta\big)$ is negative and
$\left|t^{\beta+\gamma-1}E_{\beta,\beta+\gamma}\big(-xt^\beta\big)\right|$ is decreasing to zero as $t\to \infty$. Then, we can apply
the same argument as for case (1) to prove that $\int_{0}^{\infty}f(\sigma,t)\ud t$ is continuous at $\sigma=0$. Similarly, we can
also prove that $\int_{0}^{\infty}g(\sigma,t)\ud t$ is continuous at $\sigma=0$ for both cases (1) and (2). Thus, the integral
\eqref{e:int-fg} is continuous at $\sigma=0$.

  On the other hand, using Euler's formula, we have
  \begin{align*}
    (i\tau)^{\beta+\gamma}+x(i\tau)^\gamma=\left(|\tau|e^{i\tilde{\omega}}\right)^{\beta+\gamma}+x\left(|\tau|e^{i\tilde{\omega}}\right)^\gamma,
  \end{align*}
  where
  \begin{equation*}
    \tilde{\omega}=\text{arg}(i\tau)=
    \begin{cases}
      -\pi/2, & \mbox{if } \tau<0, \\
      \pi/2, & \mbox{if } \tau>0.
    \end{cases}
  \end{equation*}
 For $\beta\in(0,2)$,
  \begin{equation*}
    \beta\tilde{\omega}\in
    \begin{cases}
      (-\pi,0), & \mbox{if } \tau<0, \\
      (0,\pi), & \mbox{if } \tau>0.
    \end{cases}
  \end{equation*}
  Since $x>0$ and $\tau\neq0$, it follows that $(i\tau)^{\beta+\gamma}+x(i\tau)^\gamma\neq0$. Hence, there are no poles in
  $\dfrac{(i\tau)^{-\gamma}}{(i\tau)^\beta+x}$ and the fraction $\dfrac{(\sigma+i\tau)^{-\gamma}}{(\sigma+i\tau)^\beta+x}$ is
  continuous at $\sigma=0$.

  Consequently, when $\beta\in(0,2)$, $\gamma\in[0,1)$ and $\tau\neq0$, the  Fourier transform  \eqref{e:fourier-MLF} follows
  from setting $\sigma=0$ in both sides of  \eqref{e:lap-MLF}.
\end{proof}

The following two lemmas are based on the results in \cite[Sections 1.5 and 1.8]{kilbas2004h} which are summarized in \cite[Appendix A.2]{chen2017space}.

\begin{lemma}\label{le:partial-x-ptx}
  Suppose $\alpha>0$, $\beta\in(0,2)$, $\gamma\ge0$, and $d\in\mathbb{N}$. Let
  \begin{equation*}
    g(x)= x^{-2} \FoxH{2,2}{3,4}{ x^\alpha} {(d,\alpha),\:(1,1),\:(\beta+\gamma,\beta)}{(d/2,\alpha/2),\:(1,1),\:(1,\alpha/2),\:(d+1,\alpha)},
    \quad x>0.
  \end{equation*}
  Then, there exist positive and finite constants $K_1 < K_2$ such that 
  \begin{equation}\label{Eq:small}
    |g(x)|\le C (x^d +x^{\alpha-2}|\log x|), \quad \hbox{ for all } 0< x \le K_1,
  \end{equation}
and 
 \begin{equation}\label{Eq:large}
 |g(x)|\le C x^{-2}, \quad \hbox{ for all } x \ge K_2.
\end{equation}
\end{lemma}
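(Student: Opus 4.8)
The plan is to work entirely from the Mellin--Barnes representation of the Fox $H$-function, which is the object underlying \cite[Sect.~1.5 and 1.8]{kilbas2004h} and \cite[Appendix A.2]{chen2017space}. For the parameters at hand one has, with $z=x^\alpha$,
\[
H^{2,2}_{3,4}(z)=\frac{1}{2\pi i}\int_{\mathcal L}\mathcal H(s)\,z^{-s}\,ds,\qquad
\mathcal H(s)=\frac{\Gamma\!\big(\tfrac{d}{2}+\tfrac{\alpha}{2}s\big)\,\Gamma(1+s)\,\Gamma(1-d-\alpha s)\,\Gamma(-s)}{\Gamma\!\big(-\tfrac{\alpha}{2}s\big)\,\Gamma(-d-\alpha s)\,\Gamma(\beta+\gamma+\beta s)}.
\]
The first move I would make is the simplification $\Gamma(1-d-\alpha s)/\Gamma(-d-\alpha s)=-(d+\alpha s)$, giving
\[
\mathcal H(s)=\frac{-(d+\alpha s)\,\Gamma\!\big(\tfrac{d}{2}+\tfrac{\alpha}{2}s\big)\,\Gamma(1+s)\,\Gamma(-s)}{\Gamma\!\big(-\tfrac{\alpha}{2}s\big)\,\Gamma(\beta+\gamma+\beta s)}.
\]
This exhibits the two structural facts that drive the whole estimate: the entire string of poles coming from $\Gamma(1-d-\alpha s)$ is removed, and the polynomial factor $-(d+\alpha s)$ introduces a \emph{zero} at $s=-d/\alpha$. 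Since $a^\ast=2-\beta>0$ here, $\mathcal H$ is absolutely integrable along vertical lines and the contour-shift expansions of \cite[Sect.~1.5, 1.8]{kilbas2004h} apply.

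For the small-argument bound \eqref{Eq:small} I would push $\mathcal L$ to the left and sum residues at the poles of $\Gamma(\tfrac d2+\tfrac\alpha2 s)$, located at $s=-(d+2k)/\alpha$, and of $\Gamma(1+s)$, located at $s=-1-\ell$ $(k,\ell\ge0)$; these contribute powers $x^{d+2k}$ and $x^{\alpha(1+\ell)}$ to $H^{2,2}_{3,4}(x^\alpha)$. The crucial point is that the $k=0$ pole $s=-d/\alpha$, which would otherwise produce the forbidden term $x^{d}$ in $H$ (hence $x^{d-2}$ in $g$), is killed by the zero of $-(d+\alpha s)$. Thus the leading survivors are $x^{d+2}$ from the first family and $x^{\alpha}$ from the second (the latter vanishing when $\gamma=0$, where $\Gamma(\beta+\gamma+\beta s)^{-1}$ has a zero at $s=-1$); after multiplying by $x^{-2}$ these become exactly the $x^{d}$ and $x^{\alpha-2}$ terms of \eqref{Eq:small}. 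When $\alpha=d+2$ the two exponents coincide, the pole at $s=-1$ becomes double, and $\partial_s(z^{-s})$ generates a factor $\log z=\alpha\log x$: this is the origin of the $|\log x|$, which in the non-resonant cases is simply a harmless majorant since $|\log x|\to\infty$. Bounding the remaining shifted integral by the decay of $\mathcal H$ on the new vertical line then closes \eqref{Eq:small}.

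For the large-argument bound \eqref{Eq:large} I would instead shift $\mathcal L$ to the right, where the only poles of the simplified $\mathcal H$ are those of $\Gamma(-s)$ at $s=k\in\{0,1,2,\dots\}$. The $s=0$ residue vanishes because $\Gamma(-\tfrac\alpha2 s)^{-1}$ has a zero there, so $H^{2,2}_{3,4}(z)\to0$ as $z\to\infty$ and in fact the leading term is the $s=1$ residue of size $z^{-1}=x^{-\alpha}$. Since all I need is that $H^{2,2}_{3,4}(x^\alpha)$ stay bounded (indeed decay) for large $x$, this gives $g(x)=x^{-2}H^{2,2}_{3,4}(x^\alpha)=O(x^{-\alpha-2})\le Cx^{-2}$ for $x\ge K_2\ge1$; even if the asymptotics at infinity turned out to be exponentially small rather than algebraic, the conclusion $|g(x)|\le Cx^{-2}$ would only be reinforced. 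This is the easier of the two directions.

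The main obstacle is not a single hard estimate but the pole bookkeeping and the verification of cancellations: one must confirm that the $\Gamma$-quotient genuinely removes both the would-be $x^{d-2}$ singularity at $0$ and the $z^0$ term at $\infty$, track precisely where coincident poles create the logarithm, and check that the degenerate parameter values ($\gamma=0$, $\alpha=d+2$, and any integer coincidences among the ratios $b_j/B_j$) remain consistent with \eqref{Eq:small}--\eqref{Eq:large} read as \emph{upper} bounds. A secondary technical point is the uniform control of the contour-shift remainders; I would not reprove this but quote the explicit residue expansions with their error terms from \cite[Sect.~1.5 and 1.8]{kilbas2004h} (as summarized in \cite[Appendix A.2]{chen2017space}), so that the argument reduces to identifying the leading exponents and checking that every more-singular candidate term has vanishing coefficient.
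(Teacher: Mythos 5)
Your proposal is correct and is essentially the paper's own proof: the kernel simplification $\Gamma(1-d-\alpha s)/\Gamma(-d-\alpha s)=-(d+\alpha s)$ that drives your cancellations is exactly what the paper exploits, appearing as the vanishing coefficient $h_{10}^*=0$ (through the convention $1/\Gamma(0)=0$) in the expansion at the origin, and as the rewriting of $H^{2,2}_{3,4}$ into $-H^{3,1}_{3,4}$ via \cite[(2.1.7)]{kilbas2004h} for the behavior at infinity. Both arguments then invoke the same residue-expansion machinery of \cite[Sections 1.5, 1.8]{kilbas2004h} as packaged in \cite[Appendix A.2]{chen2017space}, including the double-pole/logarithm bookkeeping and the check of $a^*=2-\beta>0$, so the two proofs coincide in substance.
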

\begin{proof}
  Let
  \begin{equation*}
    f(x)=\FoxH{2,2}{3,4}{ x} {(d,\alpha),\:(1,1),\:(\beta+\gamma,\beta)}{(d/2,\alpha/2),\:(1,1),\:(1,\alpha/2),\:(d+1,\alpha)}.
  \end{equation*}
  Then
   \begin{equation}\label{Eq:g}
  g(x)=x^{-2}f(x^\alpha).
    \end{equation}
    Noting \cite[(1.1.1)]{kilbas2004h} we have
   $a_1=d, a_2=1, a_3=\beta+\gamma,  \alpha_1=\alpha, \alpha_2=1, \alpha_3=\beta, b_1=d/2, b_2=1, b_3=1, b_4=d+1,
   \beta_1=\alpha/2, \beta_2=1, \beta_3=\alpha/2, \beta_4=\alpha$,
    and applying \cite[(1.1.7)]{kilbas2004h}, we have
  \begin{equation*}
    a^*=\alpha+1-\beta+\alpha/2+1-\alpha/2-\alpha=2-\beta>0,
  \end{equation*}
which implies Condition (A5) in the Appendix of \cite{chen2017space}. Thus, we can apply Theorem  1.7  in
\cite[Appendix A.2]{chen2017space} to obtain the asymptotic property of $f(x)$ as $x\to 0^+$.  Note that the poles
\begin{equation*}
  \left\{b_{jl}=-\frac{b_j+l}{\beta_j}, l=0,1,\dots\right\}, \text{ for }j=1,2;
\end{equation*}
may not be simple (see \cite[(A7)]{chen2017space} for the definition), namely,
\begin{equation*}
\text{ for } i,j\in\{1,2\}, ~ \exists i\neq j \text{ such that } \beta_j(b_i+k)=\beta_i(b_j+l) \text{ for some } k,l\in \mathbb N\cup \{0\}.
\end{equation*}
We apply (A11) in \cite[Theorem  1.7]{chen2017space} to obtain that as $x\to 0^+,$
 \begin{equation}\label{e:asymp-fx-zoro-1}
   \begin{split}
      f(x)\sim & \sum_{l\in\N\cup\{0\},l\notin A_1} h_{1l}^* x^\frac{d+2l}{\alpha}+ \sum_{l\in\N\cup\{0\},l\notin A_2} h_{2l}^*x^{1+l}\\
    &  + \sum_{l\in\N\cup\{0\},l\in A_1}\sum_{i=0}^{1} H_{1li}^* x^\frac{d+2l}{\alpha} (\log x)^i
        + \sum_{l\in\N\cup\{0\},l\in A_2 }\sum_{i=0}^{1} H_{2li}^* x^{1+l}(\log x)^i,
   \end{split}
 \end{equation}
where
  \begin{equation*}
    A_1:=\left\{l: l= \frac{\alpha(k+1)-d}{2},k=0,1,2,\cdots\right\} \text{ and } A_2:= \left\{l: l= \frac{d+2k}{\alpha}-1,k=0,1,2,\cdots\right\},
  \end{equation*}
  and $h_{jl}^*$, $H_{jli}^*$ are defined by (A12) and (A13) in \cite{chen2017space}, respectively.
  For the first summation in \eqref{e:asymp-fx-zoro-1}, we have
  \begin{equation*}
    h_{10}^* =\frac{2}{\alpha} \frac{\Gamma(1-d/\alpha)\Gamma(1)\Gamma(d/\alpha)}{\Gamma(\beta+\gamma-d\beta/\alpha)\Gamma(d/2)\Gamma(0)}=0,
  \end{equation*}
   and for the third summation in \eqref{e:asymp-fx-zoro-1}, we have
  \begin{equation*}
    \frac{d+2l}{\alpha}=k+1, \text{ for } l\in A_1.
  \end{equation*}
  Hence,  as $x\to 0^+,$
   \begin{equation}\label{e:asymp-fx-zoro-2}
     \begin{split}
        f(x)\sim & \sum_{l\in\N,l\notin A_1} h_{1l}^* x^\frac{d+2l}{\alpha}+ \sum_{l\in\N\cup\{0\},l\notin A_2} h_{2l}^*x^{1+l}\\
    &  + \sum_{l\in\N\cup\{0\},l\in A_1}\sum_{i=0}^{1} H_{1li}^* x^{k+1} (\log x)^i
        + \sum_{l\in\N\cup\{0\},l\in A_2 }\sum_{i=0}^{1} H_{2li}^* x^{1+l}(\log x)^i.
     \end{split}
   \end{equation}
  Thus, by \eqref{e:asymp-fx-zoro-2}, we derive that for $x$ small enough,
  \begin{equation*}
    |f(x)|\le C\left(x^{(d+2)/\alpha}+x+x|\log x|+x|\log x| \right) \le C\left(x^{(d+2)/\alpha}+x|\log x|\right).
  \end{equation*}
  This and (\ref{Eq:g}) imply (\ref{Eq:small}).

 On the other hand, by \cite[(2.1.7)]{kilbas2004h} we have
  \begin{align*}
    f(x)&=\FoxH{2,2}{3,4}{ x} {(d,\alpha),\:(1,1),\:(\beta+\gamma,\beta)}{(d/2,\alpha/2),\:(1,1),\:(1,\alpha/2),\:(d+1,\alpha)}
    =-\FoxH{3,1}{3,4}{ x} {(1,1),\:(\beta+\gamma,\beta),\:(d,\alpha)}{(d+1,\alpha),\:(d/2,\alpha/2),\:(1,1),\:(1,\alpha/2)}.
  \end{align*}
  Since the second Fox H-function satisfies (A6) in \cite{chen2017space}, one can use \cite[(A8)]{chen2017space} to see that
  \begin{equation*}
    f(x)\sim -\sum_{k=0}^{\infty} h_{1k} x^{-k}, \quad x\to+\infty,
  \end{equation*}
  where $h_{1k}$ is given by \cite[(A9)]{chen2017space}. Thus, $|f(x)|\le C$ for all $x$ large enough.  This and
  (\ref{Eq:g}) yield (\ref{Eq:large}).
\end{proof}

\begin{lemma}\label{le:partial-t-ptx}
  Suppose $\alpha>0$, $\beta\in(0,2)$, $\gamma\ge0$, and $d\in\mathbb{N}$. Let
  \begin{equation*}
    h(x)=x^{-d}\FoxH{2,1}{2,3}{ x^\alpha}{(0,1),\:(\beta+\gamma,\beta)}{(d/2,\alpha/2),\:(1,1),\:(1,\alpha/2)}, \quad x>0.
  \end{equation*}
  Then, there exist positive and finite constants $K_3 < K_4$ such that 
  \begin{equation*}
    |h(x)|\le C(1+x^{\alpha-d}|\log x|), \quad \hbox{ for all } 0< x \le K_3,
  \end{equation*}
  and
  \begin{equation*}
    |h(x)|\le C x^{-\alpha-d}, \quad \hbox{ for all } x \ge K_4.
  \end{equation*}
\end{lemma}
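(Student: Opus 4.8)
The plan is to follow the template of the proof of Lemma \ref{le:partial-x-ptx} almost verbatim, since $h$ has exactly the same structure. First I would set $\tilde{h}(x):=\FoxH{2,1}{2,3}{x}{(0,1),\:(\beta+\gamma,\beta)}{(d/2,\alpha/2),\:(1,1),\:(1,\alpha/2)}$, so that $h(x)=x^{-d}\tilde{h}(x^\alpha)$, and reduce everything to the behaviour of $\tilde h$ near $0$ and near $+\infty$. Reading off the parameters $a_1=0,\alpha_1=1$; $a_2=\beta+\gamma,\alpha_2=\beta$; $b_1=d/2,\beta_1=\alpha/2$; $b_2=1,\beta_2=1$; $b_3=1,\beta_3=\alpha/2$, the quantity $a^*$ from \cite[(1.1.7)]{kilbas2004h} equals $(1-\beta)+(\alpha/2+1)-\alpha/2=2-\beta>0$, which verifies Condition (A5) and lets me invoke \cite[Theorem 1.7]{chen2017space} for the $x\to0^+$ expansion.

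For the small-argument part, the poles governing the expansion come from $\prod_{j=1}^{2}\Gamma(b_j+\beta_j s)$ and sit at $-(d+2l)/\alpha$ (from $j=1$) and $-(1+l)$ (from $j=2$), producing the exponents $(d+2l)/\alpha$ and $1+l$ for $l\ge0$; these may coincide when $(d+2k)/\alpha=1+l$, in which case I apply the logarithmic expansion \cite[(A11)]{chen2017space}. The decisive computation is the leading coefficient: at the pole $-d/\alpha$ one gets $h_{1,0}^*=\frac{2}{\alpha}\,\frac{\Gamma(1-d/\alpha)\Gamma(1+d/\alpha)}{\Gamma(d/2)\,\Gamma(\beta+\gamma-\beta d/\alpha)}$, which---in contrast to Lemma \ref{le:partial-x-ptx}, where the spurious factor $\Gamma(0)$ forced the analogous coefficient to vanish---is here nonzero precisely because the bottom parameter $(d+1,\alpha)$ is absent. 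Substituting $y=x^\alpha$ into $h(x)=x^{-d}\tilde h(y)$, the exponent $d/\alpha$ produces the constant term, the exponent $1$ produces $x^{\alpha-d}$ (carrying an extra $\log$ exactly on the coincident poles), and every remaining exponent yields $x^{2l}$ or $x^{\alpha-d+\alpha l}$ with $l\ge1$, all dominated by $1+x^{\alpha-d}|\log x|$ for $0<x\le K_3$; since $|\log x|\ge1$ near $0$ this also absorbs the non-logarithmic $x^{\alpha-d}$ case, giving the first bound.

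For the large-argument part I would apply the transformation \cite[(2.1.7)]{kilbas2004h} to put $\tilde h$ into a form satisfying Condition (A6) of \cite{chen2017space} (here $n=1$, so the relevant numerator factor is the single $\Gamma(1-s)$, whose poles $s=1+k$ are automatically simple, giving no logarithms), and then read off from \cite[(A8)]{chen2017space} that $\tilde h(y)\sim c\,y^{-1}$ as $y\to+\infty$. Hence $h(x)=x^{-d}\tilde h(x^\alpha)\sim c\,x^{-\alpha-d}$, and the higher terms $y^{-(1+k)}$ contribute only $x^{-\alpha(1+k)-d}$, so $|h(x)|\le C x^{-\alpha-d}$ for $x\ge K_4$.

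I expect the main obstacle to be the residue bookkeeping for $\tilde h$ near $0$: correctly identifying which of the two families of poles coincide (the analogues of the sets $A_1,A_2$ appearing in Lemma \ref{le:partial-x-ptx}), tracking where the logarithmic factors arise, and above all verifying that $h_{1,0}^*\ne0$ so that the constant term is genuinely present. This is the one structural place where the computation departs from the previous lemma, and it is exactly what makes the small-$x$ bound here of the form $1+x^{\alpha-d}|\log x|$ rather than $x^d+x^{\alpha-2}|\log x|$.
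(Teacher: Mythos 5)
Your proposal is correct and takes essentially the same route as the paper: the paper's own proof only checks $a^*=2-\beta>0$ and then declares the rest "similar to Lemma \ref{le:partial-x-ptx}", and your write-up is exactly that omitted detail work — left poles at $s=-(d+2l)/\alpha$ and $s=-(1+l)$ with logarithmic corrections on coincident poles for the $x\to0^+$ bound, and the simple right poles of $\Gamma(1-s)$ giving $\tilde h(y)=O(y^{-1})$ for the $x\to\infty$ bound. Two harmless nitpicks: the detour through \cite[(2.1.7)]{kilbas2004h} is unnecessary here (as your own parenthesis notes, $n=1$ already makes the relevant poles simple, unlike in Lemma \ref{le:partial-x-ptx} where the pair $(d,\alpha)$/$(d+1,\alpha)$ had to be removed first), and $h_{1,0}^*$ can in fact still vanish when $\Gamma(\beta+\gamma-\beta d/\alpha)$ sits at a pole — but neither point affects the stated upper bounds, which follow from the pole structure alone regardless of whether individual coefficients vanish.
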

\begin{proof}
  For $h(x)$, by \cite[(1.1.7)]{kilbas2004h}, we have    $a^*=1-\beta+\alpha/2+1-\alpha/2=2-\beta>0.$   Thus, we can use Appendix A.2 in
  \cite{chen2017space} to obtain the asymptotic property of $h(x)$. We omit the details since the proof is similar to that of Lemma
  \ref{le:partial-x-ptx}.
\end{proof}

The following lemma is borrowed from \cite[Theorem A.5.1]{Durrett2019Probability}.
\begin{lemma}\label{le:change-patial-int}
  Let $(Y,\mathcal Y,\mu)$ be a measurable  space with $\mu:\mathcal Y\to[0,\infty]$. Let $f$ be a complex-valued function defined on
   $\R\times Y$ and $\delta\in(0,\infty)$ be a constant.   Suppose that,  for  $x\in(x_0-\delta,x_0+\delta)$,
  \begin{enumerate}[(i)]
    \item $u(x)=\displaystyle \int_{Y}f(x,y)\mu(\ud y)$ with $\displaystyle\int_{Y}|f(x,y)|\mu(\ud y)<\infty$;
    \medskip
    
    \item for fixed $y$, $\displaystyle \frac{\partial}{\partial x}f(x,y)$ exists and is a continuous function of $x$;
    \medskip
    
    \item $v(x)=\displaystyle\int_{Y}\frac{\partial}{\partial x}f(x,y) \mu(\ud y)$ is continuous at $x=x_0$;
    \medskip
    
    \item $\displaystyle\int_{Y}\int_{-\delta}^{\delta}\left|\frac{\partial}{\partial x}f(x_0+z,y) \right|\ud z\mu(\ud y)<\infty$;
  \end{enumerate}
  then we have \[u'(x_0)=v(x_0).\]
\end{lemma}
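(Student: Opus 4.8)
The plan is to prove the interchange of differentiation and integration via the Fundamental Theorem of Calculus combined with Fubini's theorem. First I would fix $h$ with $0<|h|<\delta$ and, using condition (i), write the difference quotient as
\[
\frac{u(x_0+h)-u(x_0)}{h}=\int_{Y} \frac{f(x_0+h,y)-f(x_0,y)}{h}\,\mu(\ud y).
\]
Since condition (ii) guarantees that $x\mapsto f(x,y)$ has a continuous partial derivative on $(x_0-\delta,x_0+\delta)$ for each fixed $y$, the Fundamental Theorem of Calculus gives, for every such $y$,
\[
f(x_0+h,y)-f(x_0,y)=\int_{0}^{h} \frac{\partial}{\partial x}f(x_0+s,y)\,\ud s.
\]

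Substituting this identity, the difference quotient becomes
\[
\frac{u(x_0+h)-u(x_0)}{h}=\frac{1}{h}\int_{Y}\int_{0}^{h} \frac{\partial}{\partial x}f(x_0+s,y)\,\ud s\,\mu(\ud y).
\]
The key step is to apply Fubini's theorem to swap the order of integration in $s$ and $y$. This is licensed precisely by condition (iv): since $\int_{Y}\int_{-\delta}^{\delta}\bigl|\partial_x f(x_0+z,y)\bigr|\,\ud z\,\mu(\ud y)<\infty$ and $[0,h]\subset[-\delta,\delta]$, the integrand is absolutely integrable on $[0,h]\times Y$ (for $h>0$; the case $h<0$ is handled symmetrically by integrating over $[h,0]$). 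After swapping and recognizing the inner integral as $v$ via condition (iii), I obtain
\[
\frac{u(x_0+h)-u(x_0)}{h}=\frac{1}{h}\int_{0}^{h} v(x_0+s)\,\ud s.
\]

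Finally, since condition (iii) asserts that $v$ is continuous at $x_0$, the average of $v$ over the shrinking interval converges to its center value,
\[
\lim_{h\to 0}\frac{1}{h}\int_{0}^{h} v(x_0+s)\,\ud s=v(x_0),
\]
which is simply the statement that $s\mapsto v(x_0+s)$ is continuous at $s=0$ together with the mean-value form of the integral. This yields $u'(x_0)=v(x_0)$, as claimed. The main obstacle is the Fubini interchange, and the purpose of hypothesis (iv) is exactly to supply the uniform (over the $\delta$-neighborhood) integrability that justifies it; once this is secured, the remaining steps are routine.
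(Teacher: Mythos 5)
Your proof is correct and follows essentially the same route as the paper's: write the increment via the Fundamental Theorem of Calculus using (ii), swap the order of integration via Fubini justified by (iv), recognize the inner integral as $v$, and conclude by the continuity of $v$ at $x_0$ from (iii). The only cosmetic difference is that you divide by $h$ before interchanging rather than after, and you note the symmetric treatment of $h<0$ explicitly, which the paper leaves implicit.
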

\begin{proof}
  Letting $|h|\le\delta$ and using (i) and (ii), we have
  \begin{align*}
    u(x_0+h)-u(x_0) &= \int_{Y} f(x_0+h,y)-f(x_0,y) \mu(\ud y)\\
    &=\int_{Y}\int_{0}^{h}\frac{\partial}{\partial x}f(x_0+z,y)\ud z\mu(\ud y).
  \end{align*}
In view of (iv), we can apply Fubini’s theorem to get
  \begin{align*}
    u(x_0+h)-u(x_0) &= \int_{0}^{h}\int_{Y}\frac{\partial}{\partial x}f(x_0+z,y)\mu(\ud y)\ud z= \int_0^h v(x_0+z) \ud z.
  \end{align*}
  The above equation implies
  \begin{equation*}
    \frac{u(x_0+h)-u(x_0)}{h} =\frac{1}{h} \int_{0}^{h} v(x_0+z)\ud z.
  \end{equation*}
  Since $v$ is continuous at $x_0$ by (iii), letting $h\to 0$ gives the desired result.
\end{proof}

The following lemma is used in the proof of Proposition \ref{prop:reg-V-add}.
\begin{lemma}\label{le:partial-fourier}
 Recall that the fundamental solution $p(t,x)$ is given in \eqref{e:de-ptx}. For all $t > s$, we have
  \begin{equation}\label{e:7-1}
    \cF\left[\frac{\partial}{\partial t} p(t-s,x-\cdot)\right](\xi)= \frac{\partial}{\partial t} \cF p(t-s,x-\cdot)(\xi),
  \end{equation}
   and  if we assume $\alpha>1$, we also have
  \begin{equation}\label{e:7-2}
    \cF\left[\frac{\partial}{\partial x_i} p(t-s,x-\cdot)\right](\xi)= \frac{\partial}{\partial x_i} \cF p(t-s,x-\cdot)(\xi),\text{ for all $i=1,\dots, d$.}
  \end{equation}
\end{lemma}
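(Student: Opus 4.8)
The plan is to prove both \eqref{e:7-1} and \eqref{e:7-2} by justifying differentiation under the integral sign through Lemma~\ref{le:change-patial-int}. Writing the Fourier transform as the spatial integral
\[
\cF p(t-s,x-\cdot)(\xi)=\int_{\R^d} e^{-i z\cdot\xi}\,p(t-s,x-z)\,\ud z,
\]
I would apply Lemma~\ref{le:change-patial-int} with $Y=\R^d$ and Lebesgue measure, taking the external variable to be $t$ for \eqref{e:7-1} and $x_i$ for \eqref{e:7-2}, and the integrand $f=e^{-iz\cdot\xi}p(t-s,x-z)$. Fixing $t>s$, I would restrict attention to a closed interval $[t_0-\delta,t_0+\delta]\subset(s,\infty)$ (resp. a bounded neighborhood of a reference value of $x_i$), so that the conclusion $u'=v$ of the lemma is exactly the claimed interchange; the explicit formula $\cF p(t-s,x-\cdot)(\xi)=e^{-i\langle x,\xi\rangle}(t-s)^{\beta+\gamma-1}E_{\beta,\beta+\gamma}(-(t-s)^\beta|\xi|^\alpha)$ from \eqref{e:fourier-r-v} then identifies the right-hand sides.

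First I would record the two partial derivatives of $p$ in closed Fox $H$-form. Differentiating the representation \eqref{e:de-ptx} in $t$ and using \eqref{e:differential-E} (which lowers the second index of the Mittag-Leffler function and, correspondingly, shifts a top parameter of the $H$-function from $(1,1)$ to $(0,1)$), the derivative $\partial_t p(t-s,x-z)$ equals an explicit power of $(t-s)$ times the function $h$ of Lemma~\ref{le:partial-t-ptx} evaluated at $|x-z|/(2(t-s)^{\beta/\alpha})$. Likewise, differentiating in $x_i$ and using the standard differentiation rules for the Fox $H$-function produces an extra prefactor $(x_i-z_i)|x-z|^{-d}$ multiplying the function $g$ of Lemma~\ref{le:partial-x-ptx} at the same scaled argument. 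Thus the integrability and continuity required by Lemma~\ref{le:change-patial-int} reduce to the two-sided bounds already furnished by Lemmas~\ref{le:partial-t-ptx} and \ref{le:partial-x-ptx}.

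With these formulas in hand, hypotheses (i)--(iii) of Lemma~\ref{le:change-patial-int} are routine: (ii) holds because the Fox $H$-function is smooth away from the origin, so $\partial_t f$ and $\partial_{x_i} f$ exist and are continuous in the external variable for each fixed $z$ off the moving singularity $z=x$; (i) holds since $p(t-s,\cdot)\in L^1(\R^d)$, the Fox $H$-factor decaying at the origin fast enough to absorb the $|\cdot|^{-d}$ prefactor; and (iii) follows by dominated convergence once the domination in (iv) is secured. The crux is therefore hypothesis~(iv), the locally uniform absolute integrability of the derivative. After the substitution $w=x-z$ and passage to polar coordinates $w=r\omega$, the $t$-derivative yields $\int_0^\infty |h(cr)|\,r^{d-1}\,\ud r$: near $r=0$ the singular part of the small-argument bound for $h$ contributes $r^{\alpha-1}|\log r|$, integrable for every $\alpha>0$, while the large-argument bound $|h|\lesssim r^{-\alpha-d}$ gives decay $r^{-\alpha-1}$ at infinity. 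Uniformity over the interval comes from $t_0-\delta>s$, which keeps the $(t-s)$-prefactors bounded, so no restriction on $\alpha$ is needed for \eqref{e:7-1}.

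The main obstacle I anticipate is verifying (iv) for the spatial derivative \eqref{e:7-2} near the singularity. Here the extra prefactor $(x_i-z_i)|x-z|^{-d}$, which is of size $|w|^{1-d}$, combines with the polar Jacobian $r^{d-1}$ to leave exactly $\int_0^\infty |g(cr)|\,\ud r$, with no surplus power of $r$ to help at the origin. The small-argument bound $|g(r)|\lesssim r^{d}+r^{\alpha-2}|\log r|$ then forces the convergence condition $\alpha-2>-1$, i.e. $\alpha>1$, which is precisely the standing hypothesis of \eqref{e:7-2}; the large-argument bound $|g|\lesssim r^{-2}$ handles the tail. Once (iv) is established in both cases, Lemma~\ref{le:change-patial-int} yields $u'=v$, which are the identities \eqref{e:7-1} and \eqref{e:7-2}.
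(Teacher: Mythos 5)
Your proposal follows essentially the same route as the paper: both reduce \eqref{e:7-1} and \eqref{e:7-2} to the differentiation-under-the-integral criterion of Lemma~\ref{le:change-patial-int}, express the derivatives of $p$ in closed Fox $H$-form, and verify the domination hypothesis (iv) through the asymptotic bounds of Lemmas~\ref{le:partial-t-ptx} and~\ref{le:partial-x-ptx}, with the near-origin bound $|g(r)|\lesssim r^{d}+r^{\alpha-2}|\log r|$ pinpointing exactly where $\alpha>1$ is needed for \eqref{e:7-2} --- this is precisely the paper's Steps 1 and 2.

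One correction is needed in your treatment of the time derivative. Your closed form for $\partial_t p$ is incomplete: differentiating \eqref{e:de-ptx} in $t$ produces \emph{two} terms by the product rule (compare the paper's \eqref{e:partial-t-ptx}): besides the chain-rule term you describe --- the $h$-type $H$-function with top parameter shifted from $(1,1)$ to $(0,1)$ --- there is the term $(\beta+\gamma-1)(t-s)^{-1}p(t-s,x-z)$ coming from the prefactor $(t-s)^{\beta+\gamma-1}$. Consequently, hypothesis (iv) requires, in addition to your integral $\int_0^\infty |h(cr)|\,r^{d-1}\,\ud r$, the finiteness of the corresponding integral for the $p$-type term; this follows from the small- and large-argument bounds on $p(1,x)$ (the paper's \eqref{e:asymp-p1x-0} and \eqref{e:asymp-p1x-infty}, quoted from \cite{chen2019nonlinear}), i.e., by the same $L^1$ estimate you already invoke for hypothesis (i), so the omission is harmless but must be stated. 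Relatedly, you justify the parameter shift $(1,1)\to(0,1)$ by the Mittag-Leffler identity \eqref{e:differential-E}, which is a statement about the Fourier transform of $p$; inferring from it the form of the physical-space derivative presupposes the very interchange \eqref{e:7-1} you are proving. The clean way, as in the paper's \eqref{e:udt-ptx} and \eqref{e:udxi-ptx}, is to differentiate the $H$-function representation directly using the $H$-function differentiation rules of \cite{kilbas2004h}, which is what you in fact do (correctly) for the spatial derivative.
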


\begin{proof}
{\em Step1: } First we prove the assertion \eqref{e:7-1}. By  \cite[(2.2.1)]{kilbas2004h} and \cite[(2.1.1)]{kilbas2004h}, we have
\begin{equation}\label{e:udt-ptx}
\begin{aligned}
  &\frac{\ud}{\ud x}   \FoxH{2,1}{2,3}{x}{(1,1),\:(\beta+\gamma,\beta)}{(d/2,\alpha/2),\:(1,1),\:(1,\alpha/2)}\\
  &=x^{-1} \FoxH{2,2}{3,4}{x}{(0,1),\:(1,1),\:(\beta+\gamma,\beta)}{(d/2,\alpha/2),\:(1,1),\:(1,\alpha/2),\:(1,1)}\\
  &=x^{-1} \FoxH{2,1}{2,3}{x}{(0,1),\:(\beta+\gamma,\beta)}{(d/2,\alpha/2),\:(1,1),\:(1,\alpha/2)}.
\end{aligned}
\end{equation}
Using \eqref{e:de-ptx} and \eqref{e:udt-ptx}, we have
\begin{align}\label{e:partial-t-ptx}
  &\frac{\partial}{\partial t}  p(t-s,x) \notag \\
  &= \frac{\partial}{\partial t}\left[ \pi^{-d/2}|x|^{-d}(t-s)^{\beta+\gamma-1}\times \FoxH{2,1}{2,3}{\frac{ |x|^\alpha}{2^{\alpha} (t-s)^\beta}}
       {(1,1),\:(\beta+\gamma,\beta)}{(d/2,\alpha/2),\:(1,1),\:(1,\alpha/2)}\right]\notag\\
       &= (\beta+\gamma-1)\pi^{-d/2}|x|^{-d}(t-s)^{\beta+\gamma-2}\times \FoxH{2,1}{2,3}{\frac{ |x|^\alpha}{2^{\alpha} (t-s)^\beta}}
       {(1,1),\:(\beta+\gamma,\beta)}{(d/2,\alpha/2),\:(1,1),\:(1,\alpha/2)}\notag\\
       &\quad -\beta \pi^{-d/2}|x|^{-d} (t-s)^{\beta+\gamma-2}\FoxH{2,1}{2,3}{\frac{ |x|^\alpha}{2^{\alpha} (t-s)^\beta}}{(0,1),\:(\beta+\gamma,\beta)}
       {(d/2,\alpha/2),\:(1,1),\:(1,\alpha/2)}.
\end{align}
Then, by a change of variable and (\ref{e:partial-t-ptx}),
 \begin{align*}
    & \int_{\R^d} \frac{\partial}{\partial t}  p(t-s,x-y)e^{-i\langle y,\xi\rangle}\ud y=\int_{\R^d} \frac{\partial}{\partial t}  p(t-s,y)e^{-i\langle x-y,\xi\rangle}\ud y\\
    &=(t-s)^{\beta+\gamma-2} \int_{\R^d}\bigg[ (\beta+\gamma-1)\pi^{-d/2}|y|^{-d}\times \FoxH{2,1}{2,3}{\frac{ |y|^\alpha}{2^{\alpha} }}
       {(1,1),\:(\beta+\gamma,\beta)}{(d/2,\alpha/2),\:(1,1),\:(1,\alpha/2)}\\
       &\quad -\beta \pi^{-d/2}|y|^{-d} \FoxH{2,1}{2,3}{\frac{ |y|^\alpha}{2^{\alpha} }}{(0,1),\:(\beta+\gamma,\beta)}{(d/2,\alpha/2),\:(1,1),\:(1,\alpha/2)}\bigg]
       e^{-i\langle x-y,\xi\rangle}\ud y,
 \end{align*}
 the above function is continuous in $t \in (s, \infty)$. By Lemma \ref{le:change-patial-int}, it is sufficient to show that for any $t>s$
  \begin{equation}\label{e:step1-1}
    \int_{\R^d} |p(t-s,x-y)|\ud y<\infty,
  \end{equation}
  and for any $t>s$, there exists $\delta>0$ such that
  \begin{equation}\label{e:step1-2}
    \int_{\R^d}\ud y\int_{-\delta}^{\delta} \ud r\left|\frac{\partial}{\partial t}p(t-s+r,x-y)\right| <\infty.
  \end{equation}

 It follows from \cite[Lemma 4.3]{chen2019nonlinear} that  there is a constanrt $K_5> 0$ such that for all $0< x \le K_5$, 
\begin{equation}\label{e:asymp-p1x-0}
  p(1,x)\le
  \begin{cases}
    C|x|^{\alpha-d}, & \mbox{if } \alpha<d, \\
    C |\log x|, & \mbox{if } \alpha=d, \\
    C, & \alpha>d.
  \end{cases}
\end{equation}
By \cite[Lemma 4.5]{chen2019nonlinear}, there is a constant $K_6> 0$ such that for all $x \ge K_6$,
\begin{equation}\label{e:asymp-p1x-infty}
  p(1,x)\le C|x|^{-\alpha-d}.
\end{equation}
By  \eqref{e:asymp-p1x-0} and \eqref{e:asymp-p1x-infty}, we have
  \begin{equation*}
  \begin{split}
   &\int_{\R^d}\left| p(t-s,x-y)\right|\ud y =\int_{\R^d}\left| p(t-s,x)\right|\ud x\\
   &= \int_{\R^d}\pi^{-d/2}|x|^{-d}(t-s)^{\beta+\gamma-1}\times \left|\FoxH{2,1}{2,3}{\frac{ |x|^\alpha}{2^{\alpha} (t-s)^\beta}}
       {(1,1),\:(\beta+\gamma,\beta)}{(d/2,\alpha/2),\:(1,1),\:(1,\alpha/2)}\right| \ud x\\
   &= (t-s)^{\beta+\gamma-1} \int_{\R^d} \pi^{-d/2}|x|^{-d}\times \left|\FoxH{2,1}{2,3}{\frac{ |x|^\alpha}{2^{\alpha} }}
       {(1,1),\:(\beta+\gamma,\beta)}{(d/2,\alpha/2),\:(1,1),\:(1,\alpha/2)}\right| \ud x\\
   &=  (t-s)^{\beta+\gamma-1} \int_{\R^d}  \left|p(1,x)\right| \ud x<\infty,
  \end{split}
\end{equation*}
which proves \eqref{e:step1-1}.

To verify \eqref{e:step1-2}, we use \eqref{e:partial-t-ptx} to bound the integral by
\begin{align*}
  &\int_{\R^d}\int_{-\delta}^{\delta}\left|\frac{\partial}{\partial t}p(t-s+r,x-y)\right|\ud r \ud y
  =\int_{-\delta}^{\delta}\int_{\R^d}\left|\frac{\partial}{\partial t}  p(t-s+r,x)\right|\ud r\ud x \le I_1+I_2,
\end{align*}
where
\begin{equation*}
\begin{split}
  I_1 &=\int_{-\delta}^{\delta}\ud r\int_{\R^d}\ud x |\beta+\gamma-1|\pi^{-d/2}|x|^{-d}(t-s+r)^{\beta+\gamma-2}\\
  &\qquad \qquad \times \left|\FoxH{2,1}{2,3}{\frac{ |x|^\alpha}{2^{\alpha} (t-s+r)^\beta}}
       {(1,1),\:(\beta+\gamma,\beta)}{(d/2,\alpha/2),\:(1,1),\:(1,\alpha/2)}\right|
\end{split}
\end{equation*}
and
\begin{equation*}
\begin{split}
  I_2 &=  \int_{-\delta}^{\delta}\int_{\R^d} \beta \pi^{-d/2}|x|^{-d} (t-s+r)^{\beta+\gamma-2}\left|\FoxH{2,1}{2,3}
  {\frac{ |x|^\alpha}{2^{\alpha} (t-s+r)^\beta}}{(0,1),\:(\beta+\gamma,\beta)}{(d/2,\alpha/2),\:(1,1),\:(1,\alpha/2)}\right|\ud r\ud t.
\end{split}
\end{equation*}
For the integral $I_1$, we have
\begin{align*}
  I_1 
       &=|\beta+\gamma-1| \int_{-\delta}^{\delta}(t-s+r)^{\beta+\gamma-2}\ud r\\
       &\qquad \quad\times\int_{\R^d}\pi^{-d/2}|x|^{-d} \left|\FoxH{2,1}{2,3}{\frac{ |x|^\alpha}{2^{\alpha} }}
       {(1,1),\:(\beta+\gamma,\beta)}{(d/2,\alpha/2),\:(1,1),\:(1,\alpha/2)}\right|\ud x\\
       &= \left((t-s+\delta)^{\beta+\gamma-1}-(t-s-\delta)^{\beta+\gamma-1}\right)\int_{\R^d} |p(1,x)| \ud x<\infty,
\end{align*}
where the integral with respect to $x$ is finite due to  the upper bounds \eqref{e:asymp-p1x-0} and \eqref{e:asymp-p1x-infty}.

On the other hand,
\begin{align*}
  I_2 
  &=\beta \pi^{-d/2}\int_{-\delta}^{\delta} (t-s+r)^{\beta+\gamma-2}\ud r \int_{\R^d}|x|^{-d} \left|\FoxH{2,1}{2,3}{\frac{ |x|^\alpha}{2^{\alpha} }} {(0,1),\:(\beta+\gamma,\beta)}{(d/2,\alpha/2),\:(1,1),\:(1,\alpha/2)}\right|\ud x\\
  &=\beta \pi^{-d/2}\left((t-s+\delta)^{\beta+\gamma-1}-(t-s-\delta)^{\beta+\gamma-1}\right)\\
  &\quad \times\int_{\R^d}|x|^{-d} \left|\FoxH{2,1}{2,3}{\frac{ |x|^\alpha}{2^{\alpha} }} {(0,1),\:(\beta+\gamma,\beta)}{(d/2,\alpha/2),\:(1,1),\:(1,\alpha/2)}\right|\ud x
  <\infty,
\end{align*}
where the integral with respect to $x$ is also finite thanks to Lemma \ref{le:partial-t-ptx}. This proves \eqref{e:step1-2}.

{\em Step 2:} Now we prove the second assertion \eqref{e:7-2}. By \cite[(2.2.1)]{kilbas2004h}, we have
\begin{align}\label{e:udxi-ptx}
   & \frac{\ud}{\ud x}\left[x^{-d}\times \FoxH{2,1}{2,3}{C x^\alpha}
       {(1,1),\:(\beta+\gamma,\beta)}{(d/2,\alpha/2),\:(1,1),\:(1,\alpha/2)}\right]\notag\\
       = &\, x^{-d-1}\FoxH{2,2}{3,4}{C x^\alpha}
       {(d,\alpha),\:(1,1),\:(\beta+\gamma,\beta)}{(d/2,\alpha/2),\:(1,1),\:(1,\alpha/2),\:(d+1,\alpha)}.
\end{align}
Then, by \eqref{e:de-ptx} and \eqref{e:udxi-ptx}, for $i=1, \dots, d$,
\begin{align}\label{e:exchange-y-neq-x}
  &\frac{\partial}{\partial x_i}p(t-s,x-y)= \pi^{-d/2}|x-y|^{-d-1}(t-s)^{\beta+\gamma-1}\notag\\
  &\quad\times \left|\FoxH{2,2}{3,4}{\frac{ |x-y|^\alpha}{2^{\alpha} (t-s)^\beta }}
       {(d,\alpha),\:(1,1),\:(\beta+\gamma,\beta)}{(d/2,\alpha/2),\:(1,1),\:(1,\alpha/2),\:(d+1,\alpha)}\right|\times\frac{x_i-y_i}{|x-y|},
\end{align}
which is continuous in $x_i$ when $y\neq x$.
We have obtained that $\int_{\R^d} |p(t-s,x-y)|\ud y<\infty$ in Step 1 and
\begin{align*}
  &\int_{\R^d} \frac{\partial}{\partial x_i}p(t-s,x-y)e^{-i\langle y,\xi\rangle}\ud y=\int_{\R^d}  \pi^{-d/2}|x-y|^{-d-1}(t-s)^{\beta+\gamma-1} \\
  &\quad \times\left|\FoxH{2,2}{3,4}{\frac{ |x-y|^\alpha}{2^{\alpha} (t-s)^\beta }}
       {(d,\alpha),\:(1,1),\:(\beta+\gamma,\beta)}{(d/2,\alpha/2),\:(1,1),\:(1,\alpha/2),\:(d+1,\alpha)}\right|\times\frac{x_i-y_i}{|x-y|}e^{-i\langle y,\xi\rangle} \ud y\\
  &=\int_{\R^d}  \pi^{-d/2}|y|^{-d-1}(t-s)^{\beta+\gamma-1} \\
  &\quad \times\left|\FoxH{2,2}{3,4}{\frac{ |y|^\alpha}{2^{\alpha} (t-s)^\beta }}
       {(d,\alpha),\:(1,1),\:(\beta+\gamma,\beta)}{(d/2,\alpha/2),\:(1,1),\:(1,\alpha/2),\:(d+1,\alpha)}\right|\times\frac{y_i}{|y|}e^{-i\langle x-y,\xi\rangle} \ud y,
\end{align*}
which is also continuous in $x_i$ by dominated convergence theorem and Lemma \ref{le:partial-x-ptx}, noting that we have assumed $\alpha>1$. Using the changes of variables $\tilde{y_i}=y_i-z$,  we get
\begin{equation*}
  \begin{split}
   &  \int_{\R^d}  \int_{-\delta}^{\delta} \left|\frac{\partial}{\partial x_i}p(t-s,x_1-y_1,\dots,x_i-y_i+z,\dots,x_d-y_d)\right|\ud z\ud y\\
   &=\int_{-\delta}^{\delta} \int_{\R^d} \left|\frac{\partial}{\partial x_i}p(t-s,x-y)\right|\ud y\ud z=2\delta\int_{\R^d}\left|\frac{\partial}{\partial x_i}p(t-s,x-y)\right|\ud y\\
   &= 2\delta\int_{\R^d}\pi^{-d/2}|x-y|^{-d-2}|x_i-y_i|t^{\beta+\gamma-1} \left|\FoxH{2,2}{3,4}{\frac{ |x-y|^\alpha}{2^{\alpha} t^\beta }}
       {(d,\alpha),\:(1,1),\:(\beta+\gamma,\beta)}{(d/2,\alpha/2),\:(1,1),\:(1,\alpha/2),\:(d+1,\alpha)}\right|  \ud x\\
   &=C\int_{0}^{\infty} r^{-2}t^{\beta+\gamma-1}\times \left|\FoxH{2,2}{3,4}{\frac{ r^\alpha}{2^{\alpha} t^\beta }}
       {(d,\alpha),\:(1,1),\:(\beta+\gamma,\beta)}{(d/2,\alpha/2),\:(1,1),\:(1,\alpha/2),\:(d+1,\alpha)}\right| \ud r\\
   &=C  t^{\beta+\gamma-1-\beta/\alpha} \int_{0}^{\infty}  r^{-2}\times \left|\FoxH{2,2}{3,4}{\frac{ r^\alpha}{2^{\alpha} }}
       {(d,\alpha),\:(1,1),\:(\beta+\gamma,\beta)}{(d/2,\alpha/2),\:(1,1),\:(1,\alpha/2),\:(d+1,\alpha)}\right| \ud r.
  \end{split}
\end{equation*}
 Noting that condition \eqref{e:con-space-add} implies $
\alpha>1$ and then applying  Lemma \ref{le:partial-x-ptx}, we prove that  the last integral is convergent.
By Lemma \ref{le:change-patial-int}, we have that
\begin{equation*}
  \int_{\R^d\backslash\{x\}} \frac{\partial}{\partial x_i}p(t-s,x-y)e^{-i\langle y,\xi\rangle}\ud y= \frac{\partial}{\partial x_i} \int_{\R^d\backslash\{x\}} p(t-s,x-y)e^{-i\langle y,\xi\rangle}\ud y,
\end{equation*}
which is equivalent to \eqref{e:7-2}. This completes the proof.
\end{proof}

The following lemma is adapted from \cite[Proposition 2.6]{khoshnevisan2023small} which deals with the case $\lambda =3/4$.
\begin{lemma}\label{le:an-asym}
 Define $a_1=1$ and $a_{n+1}=a_n+ca_n^{\lambda}$ for  $n\in\N$, where $c>0$ and $0<\lambda<1$ are constants. Then,
  \begin{equation*}
     a_n\sim\left(cn(1-\lambda)\right)^\frac{1}{1-\lambda} \text{ as } n\to \infty.
  \end{equation*}
\end{lemma}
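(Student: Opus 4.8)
The plan is to analyze the recursion $a_{n+1}=a_n+ca_n^{\lambda}$ by converting it into a telescoping sum for an appropriate power of $a_n$. The key observation is that, since $0<\lambda<1$, the increments $a_{n+1}-a_n=ca_n^{\lambda}$ grow sublinearly in $a_n$, so $a_n\to\infty$ but at a controlled polynomial rate. The natural quantity to track is $a_n^{1-\lambda}$, because the mean value theorem applied to the function $x\mapsto x^{1-\lambda}$ turns the multiplicative structure of the increment into an almost constant additive increment.

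First I would establish that $a_n\to\infty$ monotonically: since $a_1=1>0$ and the increment $ca_n^{\lambda}>0$, the sequence is strictly increasing, and if it were bounded it would converge to a finite limit $L$ satisfying $L=L+cL^{\lambda}$, which is impossible. Next, the heart of the argument: set $b_n:=a_n^{1-\lambda}$ and study $b_{n+1}-b_n$. By the mean value theorem there is $\zeta_n\in(a_n,a_{n+1})$ with
\begin{equation*}
  b_{n+1}-b_n=a_{n+1}^{1-\lambda}-a_n^{1-\lambda}=(1-\lambda)\zeta_n^{-\lambda}\,(a_{n+1}-a_n)=(1-\lambda)\zeta_n^{-\lambda}\,c\,a_n^{\lambda}.
\end{equation*}
Because $a_n<\zeta_n<a_{n+1}=a_n(1+ca_n^{\lambda-1})$ and $a_n\to\infty$ forces $a_{n+1}/a_n\to1$, the ratio $(\zeta_n/a_n)^{-\lambda}\to1$ as $n\to\infty$. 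Hence $b_{n+1}-b_n\to c(1-\lambda)$.

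From here I would invoke the Cesàro/Stolz principle: if the consecutive differences of $b_n$ converge to $c(1-\lambda)$, then $b_n/n\to c(1-\lambda)$, i.e.
\begin{equation*}
  a_n^{1-\lambda}=b_n\sim c(1-\lambda)\,n,
\end{equation*}
and raising both sides to the power $1/(1-\lambda)$ gives $a_n\sim\bigl(cn(1-\lambda)\bigr)^{1/(1-\lambda)}$, as claimed. The main obstacle, though a mild one, is justifying that $(\zeta_n/a_n)^{-\lambda}\to1$ uniformly enough to conclude the limit of the differences; this hinges on the sublinear growth estimate $a_{n+1}/a_n=1+ca_n^{\lambda-1}\to1$, which in turn requires the already-established divergence $a_n\to\infty$ together with $\lambda-1<0$. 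Once that is in hand, the telescoping-plus-Stolz mechanism delivers the asymptotic cleanly without any need to solve the recursion explicitly.
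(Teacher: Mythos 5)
Your proof is correct. It takes a genuinely different route from the paper's, though both rest on the same underlying observation that $a_n^{1-\lambda}$ grows linearly. The paper interpolates the sequence to a piecewise-linear function $f(t)=a_{\lfloor t\rfloor}+c\,(t-\lfloor t\rfloor)\,a_{\lfloor t\rfloor}^{\lambda}$, notes that $f$ satisfies the separable equation $f'(s)=c\bigl(h(s)f(s)\bigr)^{\lambda}$ off the integers, where $h(s)=a_{\lfloor s\rfloor}/f(s)\to 1$, and integrates $\ud f/f^{\lambda}$ exactly to obtain the closed formula $f(t)=\bigl(1+c(1-\lambda)\int_{1}^{t}h(s)^{\lambda}\,\ud s\bigr)^{1/(1-\lambda)}$; the asymptotics then follow from $h\to1$ and $f(n)=a_n$. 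You instead stay entirely discrete: the mean value theorem applied to $x\mapsto x^{1-\lambda}$ turns the recursion into increments $b_{n+1}-b_n=c(1-\lambda)(a_n/\zeta_n)^{\lambda}\to c(1-\lambda)$, and Stolz--Ces\`aro converts that into $b_n\sim c(1-\lambda)n$. Your version is more elementary and self-contained—it avoids constructing the interpolant and dealing with its differentiability only off the integers—while the paper's version buys an exact expression for the interpolant in which all the error is collected into the single integral $\int_{1}^{t}h^{\lambda}$, which could in principle be exploited for a quantitative convergence rate. For the lemma as stated, both arguments are complete; your treatment of the two supporting facts (divergence $a_n\to\infty$ via the impossibility of a finite fixed point, and $a_{n+1}/a_n\to1$ from $\lambda-1<0$) closes the only places where a gap could arise.
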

\begin{proof}
Let
\begin{equation*}
  f(t):=a_{\lfloor t\rfloor}+c(t-\lfloor t\rfloor)a_{\lfloor t\rfloor}^{\lambda},~\text{ for }  t\ge1,
\end{equation*}
 where $\lfloor t\rfloor$ denotes the largest integer that is not bigger than $t$. Note that $f$ is differentiable on $[1,\infty)\setminus\N$, and
\begin{equation}\label{e:der-f}
  f'(s)=ca_{\lfloor s\rfloor}^{\lambda}=c \big(h(s) f(s)\big)^{\lambda},\quad \text{for} \quad s\in[1,\infty)\setminus\N,
\end{equation}
where
\begin{equation*}
  h(t)=\frac{a_{\lfloor t\rfloor}}{f(t)}=\frac{a_{\lfloor t\rfloor}}{a_{\lfloor t\rfloor}+c(t-\lfloor t\rfloor)a_{\lfloor t\rfloor}^{\lambda}}.
\end{equation*}
Since $\lambda\in(0,1)$ and  $a_{\lfloor t\rfloor}\to\infty$ as $t\to\infty$, we have $h(t)\to1$ as $t\to\infty$. We can write \eqref{e:der-f}
as ${\ud f}/{f^\lambda}=ch^\lambda \ud s$ and integrate from $1$ to $t$  to get
\begin{equation*}
  f(t)=\left(1+c(1-\lambda)\int_{1}^{t}h(s)^\lambda \ud s\right)^\frac{1}{1-\lambda},
\end{equation*}
for all $t\in[1,\infty)\setminus\N$ and hence all $t\ge1$ by continuity. Since $f(n)=a_n$ for all $n\in\N$ and $\lim_{s\to\infty}h(s)=1$,
this proves the result.
\end{proof}

The following lemma is a reformulation of Talagrand \cite[Lemma 2.2]{Talagrand1995Hausdorff} given by Ledoux \cite[ (7.11)-(7.13) on p.257]{ledoux2006isoperimetry}.
\begin{lemma}\label{le:talagrand}
Let $S$ be a set with the distance $d(s,t)=\E[|Z(s)-Z(t)|^2]^{1/2}$, where $\{Z(t)\}_{t\in S}$ is a Gaussian process. Let $\cN_d(S,\e)$
denote the smallest number of (open) $d$-balls of radius $\e$ needed to cover $S$. If there is a decreasing function $\Psi: (0, \delta] \to (0, \infty)$
such that  $N(S,\e)\le\Psi(\e)$ for all $\e \in (0, \delta]$. Assume that for some constant $C_2 \ge C_1>1$  we have $C_1 \Psi(\e) \le \Psi(\e/2)\le C_2\Psi(\e)$
for all $\e\in (0, \delta]$. Then for all $\e \in (0, \delta]$,
\begin{equation*}
\mathbb{P}\bigg\{\sup_{s,t\in S}|Z(t)-Z(s)|\le \e \bigg\}  \ge \exp\left(-\frac{\Psi(\e)}{K}\right),
  \end{equation*}
where the constant $K$ depends on $C_1$ and $C_2$ only.
\end{lemma}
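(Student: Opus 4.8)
The plan is to reconstruct the argument of Talagrand \cite{Talagrand1995Hausdorff} and Ledoux \cite{ledoux2006isoperimetry} by chaining. Passing to a separable version of $Z$ and fixing a base point $t_0\in S$, the bound $\sup_{s,t\in S}|Z(t)-Z(s)|\le 2\sup_{t\in S}|Z(t)-Z(t_0)|$ reduces matters to lower bounding $\P\{\sup_{t}|Z(t)-Z(t_0)|\le\e/2\}$. For each dyadic scale $\e_k=\e\,2^{-k}$ I would fix a $d$-net $T_k\subset S$ with $|T_k|\le \cN_d(S,\e_k)\le\Psi(\e_k)$ together with a fixed parent map $T_k\to T_{k-1}$ sending each point to a nearest net point one scale coarser. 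Telescoping along the resulting chain gives $Z(t)-Z(t_0)=\sum_{k\ge1}\big(Z(\pi_k(t))-Z(\mathrm{parent}(\pi_k(t)))\big)$, where each increment is centered Gaussian with standard deviation at most $C_0\,\e\,2^{-k}$ for an absolute constant $C_0$. Hence, for any budget $(a_k)$ with $\sum_k a_k\le\e/2$, the event that every chain link satisfies $|Z(u)-Z(v)|\le a_k$ forces $\sup_t|Z(t)-Z(t_0)|\le\e/2$, and there are at most $|T_k|\le\Psi(\e_k)$ distinct links at level $k$.

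Because each $\{|Z(u)-Z(v)|\le a_k\}$ is a symmetric slab in the underlying Gaussian vector, the Gaussian correlation inequality (see Remark \ref{re:GCI}; only \v{S}id\'ak's slab case is needed) factorizes the intersection:
\[
\P\Big\{\textstyle\bigcap_{k}\bigcap_{\text{links at level }k}|Z(u)-Z(v)|\le a_k\Big\}\ \ge\ \prod_{k}\prod_{\text{links at level }k}\P\{|Z(u)-Z(v)|\le a_k\}.
\]
Writing each factor as $2\Phi(a_k/\sigma)-1$ with $\sigma\le C_0\,\e\,2^{-k}$, I would set $a_k=C_0\,\e\,2^{-k}R_k$ so that $a_k/\sigma\ge R_k$, and choose $R_k=b\sqrt{k}$ with $b=b(C_2)$ large enough that the one-dimensional tail bound $1-(2\Phi(R_k)-1)\le e^{-R_k^2/2}\le(2C_2)^{-k}$ holds. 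Using $\log(1/p)\le 2(1-p)$ for $p$ near $1$ and the doubling hypothesis $\Psi(\e_k)\le C_2^{k}\Psi(\e)$, the resulting geometric series converges:
\[
-\log\P\Big\{\sup_{t}|Z(t)-Z(t_0)|\le \tfrac{\e}{2}\Big\}\ \le\ \sum_{k\ge1}\Psi(\e_k)\cdot 2(2C_2)^{-k}\ \le\ 2\,\Psi(\e)\sum_{k\ge1}2^{-k}\ =\ 2\,\Psi(\e).
\]

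The one catch is that the budget constraint only gives $\sum_k a_k=C_0 b\,S_0\,\e=:\Lambda\e$ with $S_0=\sum_k 2^{-k}\sqrt{k}<\infty$ and $\Lambda=\Lambda(C_2)$ possibly exceeding $\tfrac12$; balancing the growth $\Psi(\e_k)\sim C_2^{k}$ of the number of links against the decay $e^{-R_k^2/2}$ of the per-link failure probability — i.e.\ pinning down $R_k$ — is the main obstacle. It is resolved by first establishing the estimate at radius $\Lambda\e$ and then replacing $\e$ by $\e/\Lambda$, invoking the doubling bound $\Psi(\e/\Lambda)\le C_2^{\lceil\log_2\Lambda\rceil}\Psi(\e)$ to absorb the distortion into the final constant $K=K(C_1,C_2)$. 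The lower doubling bound $C_1\Psi(\e)\le\Psi(\e/2)$ with $C_1>1$ enters here to guarantee that $\Psi$ neither decays nor grows too fast between scales, so that the sum over $k$ is comparable to $\Psi(\e)$ and the rescaling does not degrade the constant; this yields the claimed bound $\P\{\sup_{s,t\in S}|Z(t)-Z(s)|\le\e\}\ge\exp(-\Psi(\e)/K)$.
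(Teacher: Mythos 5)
The paper never proves this lemma (it is quoted from Talagrand and Ledoux), so your reconstruction has to stand on its own, and it contains a genuine gap. The telescoping identity at the heart of your chaining, $Z(t)-Z(t_0)=\sum_{k\ge1}\bigl(Z(\pi_k(t))-Z(\mathrm{parent}(\pi_k(t)))\bigr)$, is not correct: following the parent maps downward from $\pi_k(t)$, the chain terminates at the level-$0$ ancestor of $t$, which is some point of the $\e$-net $T_0$, not at the fixed base point $t_0$. Since $T_0$ may contain up to $\Psi(\e)$ points whose mutual distances can be as large as $\mathrm{diam}(S)$ (a quantity unrelated to $\e$), your construction leaves the increments $Z(u)-Z(v)$, $u,v\in T_0$, completely uncontrolled --- and these are exactly the dangerous ones: each has standard deviation up to $\mathrm{diam}(S)\gg\e$ but must be forced below $\e$. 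Patching this by a one-shot \v{S}id\'ak bound over $T_0$ only yields $\bigl[c\,\e/\mathrm{diam}(S)\bigr]^{\Psi(\e)}$, i.e.\ an exponent of order $\Psi(\e)\log\bigl(\mathrm{diam}(S)/\e\bigr)$, with a spurious logarithmic factor. The actual Talagrand--Ledoux argument chains over \emph{all} dyadic scales from $\mathrm{diam}(S)$ down to $\e$; at scales coarser than $\e$ the per-link allowance must be much smaller than the link length, so the per-link probabilities are small, and it is precisely the lower doubling hypothesis $C_1>1$ --- which gives $\Psi(2^m\e)\le C_1^{-m}\Psi(\e)$ --- that keeps the total cost $\sum_{m\ge0}\Psi(2^m\e)\,O(m)\le\Psi(\e)\sum_{m\ge0}C_1^{-m}\,O(m)=O(\Psi(\e))$ under control.

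The tell-tale sign is that your estimates use only $C_2$: the hypothesis $C_1>1$ appears in your write-up only as an informal closing remark, never in an inequality. But the lemma is false without it. Take $S=\{s,t\}$ with $d(s,t)=D$ and $\Psi\equiv2$: the covering and upper-doubling hypotheses hold, yet $\P\{|Z(s)-Z(t)|\le\e\}\asymp\e/D\to0$ as $\e\to0$, which cannot be bounded below by the constant $e^{-2/K}$. In this example every net $T_k$ equals $\{s,t\}$, each point is its own ancestor, every chain link has length zero, and your ``telescoping'' would assert $Z(t)-Z(s)=0$ --- exactly where the argument breaks. Your fine-scale analysis (the levels $k\ge1$, the \v{S}id\'ak factorization, the choice $R_k=b\sqrt{k}$, and the rescaling $\e\mapsto\e/\Lambda$ absorbed via upper doubling) is correct and matches the standard proof; what is missing is the coarse-scale half of the chaining, and with it the essential use of $C_1>1$.
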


\end{appendix}

\bigskip

\noindent{\bf Acknowledgements.}
The research of  J. Song is partially supported by National Natural Science Foundation of China (No. 12471142). The research of R. Wang is partially supported by  the NSF of Hubei Province  (2024AFB683).
The research of Y. Xiao is partially supported by the NSF grant DMS-2153846.

\bibliographystyle{plain}
\bibliography{ref-2024}

\end{document}